\newcommand{\Z}{\mathbb Z}
\newcommand{\GL}{{\text {\rm GL}}}
\newcommand{\SL}{{\text {\rm SL}}}
\newcommand{\smat}[4]{\left(\begin{smallmatrix}
                 #1 & #2\\
                 #3 & #4
\end{smallmatrix}\right)}
\newtheorem{theorem}{Theorem}[section]
\newtheorem{lemma}[theorem]{Lemma}
\newtheorem{proposition}[theorem]{Proposition}
\begin{document}
\title[Coefficients of weakly holomorphic modular forms]{Two-divisibility of the coefficients of certain weakly
holomorphic modular forms}
\author{Darrin Doud}
\address{Department of Mathematics, Brigham Young University, Provo, UT  84602}
\email{doud@math.byu.edu}
\author{Paul Jenkins}
\address{Department of Mathematics, Brigham Young University, Provo, UT  84602}
\email{jenkins@math.byu.edu}
\author{John Lopez}
\address{Department of Mathematics, Brigham Young University, Provo, UT  84602}
\email{johnvlopez@gmail.com}
\date{\today}

\begin{abstract}
We study a canonical basis for spaces of weakly holomorphic modular
forms of weights 12, 16, 18, 20, 22, and 26 on the full modular
group. We prove a relation between the Fourier coefficients of
modular forms in this canonical basis and a generalized Ramanujan
$\tau$-function, and use this to prove that these Fourier
coefficients are often highly divisible by 2.
\end{abstract}

\maketitle
\section{Introduction}
The unique cusp form $\Delta(z)$ of weight 12 for $\SL_2(\Z)$ has a
Fourier expansion given by
$$\Delta(z)=q\prod_{n=1}^\infty(1-q^n)^{24} =\sum_{n=1}^\infty \tau(n)q^n,$$
where $q=e^{2\pi iz}$; the Fourier coefficients of $\Delta(z)$ are
the values of the Ramanujan $\tau$-function.  Since $\Delta(z)$ is a
Hecke eigenform, the function $\tau(n)$ is multiplicative, so that
$\tau(mn) = \tau(m)\tau(n)$ for $(m,n)=1$, and for a prime $p$ and a
positive integer $a \geq 2$, we have \[\tau(p^a) =
\tau(p)\tau(p^{a-1}) - p^{11} \tau(p^{a-2}).\]

Congruences for the values of $\tau(n)$ have been studied by many
authors, as detailed by Swinnerton-Dyer in \cite{SwD2, SwD}. For
instance, Ramanujan \cite{Ramanujan}, \cite[p. 165]{Hardy} showed
that \[\tau(2n) \equiv 0 \pmod{2},\]
\[\tau(3n) \equiv 0 \pmod{3},\]
\[\tau(5n) \equiv 0 \pmod{5},\] and that \[\tau(7n), \tau(7n+3),
\tau(7n+5), \tau(7n+6) \equiv 0 \pmod{7}.\]  More generally, he
showed \cite{BerndtOno} that with $\sigma_k(n) = \sum_{d|n} d^k$,
\[\tau(2n+1) \equiv \sigma_{11}(2n+1) \pmod{2^8},\]
\[\tau(n) \equiv n^2 \sigma_7(n) \pmod{27},\]
\[\tau(n) \equiv n \sigma_9(n) \pmod{25},\] \[\tau(n) \equiv n
\sigma_3(n) \pmod{7}.\] Swinnerton-Dyer \cite{SwD2, SwD} showed that
even stronger forms of these congruences may be derived from the
existence of certain $\ell$-adic representations $\rho_\ell :
G_\mathbb{Q} \rightarrow \GL_2(\mathbb{Z}_\ell)$; the existence of
such representations was conjectured by Serre and proved by Deligne.
The idea is that when the image of $\rho_\ell$ is ``small'', then
$\ell$-adic knowledge of the determinant of an element provides
information about the trace and thus gives a congruence for $\tau(p)
\pmod{\ell}$.  This representation can also be used in conjunction
with the Chebotarev density theorem to show that
$\tau(p)$ is usually nonzero.  
However, these methods for obtaining
congruences generally depend on the fact that $\Delta(z)$ is a cusp
form, and do not easily generalize to meromorphic modular forms.

Congruences similar to those described by Swinnerton-Dyer for
$\tau(n)$ were proved by Lehner \cite{Lehner} for the coefficients
of the weight 0 modular $j$-function \[j(z)=\frac{E_4^3(z)}{\Delta(z)}=q^{-1}+\sum_{n=0}^\infty
c(n)q^n.\] Specifically, Lehner showed that
\[c(2n) \equiv 0 \pmod{2^{11}},\]
\[c(3n) \equiv 0 \pmod{3^5},\]
\[c(5n) \equiv 0 \pmod{5^2},\]
\[c(7n) \equiv 0 \pmod{7}.\]
In fact, these divisibility properties of $c(n)$ may be strengthened
to obtain results modulo powers of these primes; Lehner proved that
for odd $n$ and for $a \geq 1$, \[c(2^an)\equiv 0\pmod{2^{3a+8}},\]
with similar congruences for the primes $3, 5$, and $7$
\cite{Lehner2, Lehner}.  Kolberg \cite{Kolberg} and Aas
\cite{Aas1,Aas2} generalized these divisibility properties to
congruences for $c(n)$ modulo even higher powers of these primes.

It is natural to ask whether such divisibility or congruence results
hold for the Fourier coefficients of other modular forms.
For large families of weakly holomorphic modular forms with small weights, this question has recently been answered affirmatively.

A weakly holomorphic modular form for a subgroup $\Gamma$ of finite
index in $\SL_2(\Z)$ is a function that is holomorphic on the upper
half plane and satisfies the modular equation $f(\gamma z) =
(cz+d)^k f(z)$ for some integer weight $k$ and for all $\gamma = \smat{a}{b}{c}{d} \in \Gamma$, with poles of finite order at the cusps of
$\Gamma$. We shall denote the space of holomorphic modular forms of
weight $k$ and level $N$ by $M_k(N)$, and the space of weakly
holomorphic modular forms of weight $k$ and level $N$ by $M_k^!(N)$.
It is well known that $M_k(N)$ is finite dimensional.

If we write $k=12\ell+k'$, with $k'\in\{0,4,6,8,10,14\}$, there is
a canonical basis $\{f_{k,m}:m\geq-\ell\}$ for $M_k^!(1)$ consisting of
forms with Fourier expansions
$$f_{k,m}(z)=q^{-m}+\sum_{n=\ell+1}^\infty a_k(m,n)q^n.$$
We define $a_k(m,n)$ to be zero when either $m$ or $n$ is nonintegral, and when $m<-\ell$ or $n<\ell+1$.
The Fourier coefficients $a_k(m,n)$ possess a remarkable duality property relating weights $k$ and $2-k$, proven by
Duke and the second author in \cite{Duke-Jenkins}. Namely, for all even $k$ and all $m,n\in\Z$,
$$a_k(m,n)=-a_{2-k}(n,m).$$
Using this duality, the first and second authors \cite{Doud-Jenkins}
proved that for $k\in\{4,6,8,10,14\}$ the coefficients $a_k(m,n)$
are often divisible by high powers of $2$, $3$, $5$, and $7$.  As an
example, for odd $m,n$ and $a,b>0$,
$$a_4(2^am,2^bn)\equiv \begin{cases}0 \pmod {2^7}&\text{if $a>b$,}\cr 0 \pmod{2^{3(b-a)+7}}&\text{if $b>a$.}\end{cases}$$
An important part of the proof is the one-dimensionality of $M_k(1)$
for these weights. Similar results were proved by Griffin in \cite{Griffin} for the weight $k=0$.

In this paper, we will prove similar divisibility results for weakly
holomorphic modular forms in weights $k$ for which $M_k(1)$ is
two-dimensional.  As is well known, these weights are
$k\in\{12,16,18,20,22,26\}$.  To be precise, for each weight $k$, we define constants $\gamma$, $\rho$, $\chi$, $\nu$, $\eta$, and $\omega$ as
follows:
$$\gamma=\begin{cases}3 &\text{ for } k=12,16 \text{ or } 20\\4& \text{ for } k=18 \text{ or }
26\\5&\text{ for } k=22, \end{cases}\qquad\rho=\begin{cases}7 &\text{ for } k=12,20 \text{ or }
22\\8& \text{ for } k=16 \text{ or } 18\\9&\text{ for } k=26, \end{cases}$$
$$\chi=\begin{cases}7 &\text{ for } k=12,20 \text{ or } 22\\9& \text{ for } k=16 \text{ or }
18\\8&\text{ for } k=26, \end{cases}\qquad\nu=\begin{cases}15 &\text{ for } k=12 \text{ or } 26\\16&
\text{ for } k=16, 18, 20 \text{ or } 22,\end{cases}$$
$$\eta=\begin{cases}12&\text{for $k=12$ or $22,$}\cr 13&\text{for $k=16$ or $20$,}\cr10&\text{for $k=18$ or $26$,}\end{cases}\qquad
\omega=\begin{cases}3&\text{for $k=18, 22$, or $26$}\cr 4&\text{for $k=12$ or $20$,}\cr6&\text{for $k=16$}\end{cases}$$
Note that although these constants depend on the weight $k$, this dependence is omitted from the
notation.  We will prove the following theorem:
\begin{theorem}\label{main} Let $m,n$ be odd positive integers, and let $a,b$ be non-negative integers. For
$k\in\{12,16,18,20,22,26\}$, we have
$$a_k(2^am,2^bn)\equiv 0\begin{cases}
\pmod{2^{\gamma b}}&\text{for $a=0$,}\qquad\qquad\quad\text{(see Lemma~\ref{i<j})}\cr
\pmod{2^{\nu}}&\text{for $a>0$, $b=0$,}\qquad\text{(see Prop.~\ref{odd-coefficients})}\cr
\pmod{2^\chi}&\text{for $a>b\geq 1$,}\qquad\quad\text{(see Prop.~\ref{lower})}\cr
\pmod{2^{\rho+\gamma(b-a)}}&\text{for $b>a\geq 1$.}\quad\qquad\text{(see Prop.~\ref{upper})}
\end{cases}$$
In addition, $a_k(-1,2^bn)\equiv 0\pmod {2^{\gamma b}}$ (see Lemma~\ref{tau}) and
$$a_k(0,2^bn)\equiv 0\begin{cases}\pmod{2^\eta}&\text{for $b=0$,}\qquad(\text{see Lemma~\ref{ak0n}}),\cr\pmod{2^\omega}&\text{for $b>0$.}\qquad(\text{see Lemma~\ref{ak02in}}),\end{cases}$$
\end{theorem}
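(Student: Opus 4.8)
The plan is to derive Theorem~\ref{main} from a handful of separate lemmas and propositions, all resting on two facts: the Hecke arithmetic of the cusp form $f_{k,-1}$, and a recursion that expresses a general coefficient $a_k(m,2n)$ through values of a generalized $\tau$-function together with coefficients having smaller indices. Throughout, $v_2$ denotes the $2$-adic valuation.

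Set $\tau_k(n):=a_k(-1,n)$. For each of the six weights $\dim S_k(1)=1$, so $f_{k,-1}$ equals $\Delta$ times a monomial in $E_4$ and $E_6$, and is the unique normalized Hecke eigenform of weight $k$; hence $\tau_k$ is multiplicative and $\tau_k(2^b)=\tau_k(2)\tau_k(2^{b-1})-2^{k-1}\tau_k(2^{b-2})$. Reading off the $q^2$-coefficient gives $\gamma=v_2(\tau_k(2))$, and since $2\gamma<k-1$ the three-term recursion forces $v_2(\tau_k(2^b))=\gamma b$ with no cancellation, so $a_k(-1,2^bn)=\tau_k(2^b)\tau_k(n)\equiv0\pmod{2^{\gamma b}}$ (Lemma~\ref{tau}). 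For $f_{k,0}$, write the weight-$k$ Eisenstein series as $E_k=1+\alpha\sum_{n\ge1}\sigma_{k-1}(n)q^n$ with $\alpha\in\mathbb{Q}$ of odd denominator; then $f_{k,0}=E_k-\alpha f_{k,-1}$ is the unique holomorphic weight-$k$ form with constant term $1$ and vanishing $q^1$-coefficient, so
\[
a_k(0,n)=\alpha\bigl(\sigma_{k-1}(n)-\tau_k(n)\bigr)\qquad(n\ge2).
\]
Here $v_2(\alpha)=\omega$, and for odd $n$ a Ramanujan-type congruence $\tau_k(n)\equiv\sigma_{k-1}(n)\pmod{2^{e_k}}$—classical for $k=12$, and in general provable from $\Delta=q\prod(1-q^n)^{24}$ together with $E_4\equiv1\pmod{16}$ and $E_6\equiv1\pmod 8$—yields $v_2(a_k(0,n))\ge\omega+e_k=\eta$ (Lemma~\ref{ak0n}); for $n=2^bn'$ with $b\ge1$ one picks $n'$ so that $\sigma_{k-1}(n')$ is odd while $v_2(\tau_k(2^bn'))=\gamma b>0$, giving $v_2(a_k(0,2^bn'))=\omega$ (Lemma~\ref{ak02in}).

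For the remaining cases I would use the action of $T_2$ (multiplication by the modular function $j$ serves equally well). Since $f_{k,m}$ has vanishing $q^0$- and $q^1$-coefficients for $m\ge1$, the principal part of $f_{k,m}\mid T_2$ is $2^{k-1}q^{-2m}$, augmented by $q^{-m/2}$ when $m$ is even, its $q^0$-coefficient is $0$, and its $q^1$-coefficient is $a_k(m,2)$; matching principal-part, $q^0$, and $q^1$ data against the canonical basis gives
\[
f_{k,m}\mid T_2=2^{k-1}f_{k,2m}+f_{k,m/2}+a_k(m,2)\,f_{k,-1}\qquad(m\ge1),
\]
the middle term occurring only when $2\mid m$. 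Comparing $q^n$-coefficients, for odd $m$ and $n$ this becomes the generalized-$\tau$ relation
\[
a_k(m,2n)=a_k(m,2)\,\tau_k(n)+2^{k-1}a_k(2m,n),
\]
and for general $m$ it yields a recursion linking $a_k(m,2^bn)$ to $a_k(2m,2^{b-1}n)$, $a_k(m/2,2^{b-1}n)$, $a_k(m,2^{b-2}n)$, and $\tau_k(2^{b-1}n)$. Together with the duality $a_k(m,n)=-a_{2-k}(n,m)$, which converts this into a recursion in the first index as well, one inducts on $a+b$: the base cases $a=0$ and $b=0$ reduce to Lemmas~\ref{tau} and~\ref{ak0n} plus a uniform bound $v_2(a_k(m,2))\ge\gamma$, and each step either extracts a factor $2^{k-1}$, extracts a factor $\tau_k(2)$ contributing $\gamma$ to the valuation, or absorbs an $a_k(m,2)\tau_k(n)$ term. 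Tracking which contribution is smallest in the regions $a=0$, $a>b\ge1$, $b>a\ge1$, and $a>0=b$ then produces the exponents $\gamma b$, $\chi$, $\rho+\gamma(b-a)$, and $\nu$ of Lemma~\ref{i<j} and Propositions~\ref{lower},~\ref{upper},~\ref{odd-coefficients}.

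The hard part is threefold. First, the error term $a_k(m,2)\tau_k(n)$ in the recursion must be bounded $2$-adically for every odd $m$, i.e.\ one needs $v_2(a_k(m,2))\ge\gamma$ uniformly; through $a_k(m,2)=-a_{2-k}(2,m)$ this becomes a statement about the second Fourier coefficients of the weight-$(2-k)$ canonical basis, which must be proved directly and slotted in so the induction stays non-circular. Second, the congruences $\tau_k(n)\equiv\sigma_{k-1}(n)\pmod{2^{e_k}}$ for odd $n$ are classical only for $k=12$; for the other five weights they must be established from the eta-product form of $f_{k,-1}$, and pinning down the exact exponents $e_k$—hence $\eta$ and $\omega$—absorbs most of the casework. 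Third, because the four $(a,b)$-regimes interlock through both the recursion and the duality, the bookkeeping that yields precisely $\gamma$, $\rho$, $\chi$, and $\nu$ rather than weaker bounds must be organized with care; this is where the bulk of the technical effort goes.
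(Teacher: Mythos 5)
Your skeleton --- the Hecke relation $f_{k,m}\mid T_2=2^{k-1}f_{k,2m}+\delta_{m,2}f_{k,m/2}+a_k(m,2)\Delta_k$, duality, and induction --- is exactly what the paper uses for the $a=0$ case (Lemma~\ref{i<j}) and for the inductive steps of Propositions~\ref{lower} and~\ref{upper}, and your treatment of $\tau_k$ matches Lemma~\ref{tau}. But there is a genuine gap: the recursion cannot, by itself, produce the base cases in which the power of $2$ in the \emph{first} index grows. In the relation $2^{k-1}a_k(2m,n)=a_k(m,2n)-a_k(m,2)\tau_k(n)$ (for $m,n$ odd), the unknown $a_k(2m,n)$ carries the factor $2^{k-1}$, so to conclude $a_k(2m,n)\equiv0\pmod{2^{\nu}}$ with $\nu\approx15$ you would need the congruence $a_k(m,2n)\equiv a_k(m,2)\tau_k(n)\pmod{2^{k-1+\nu}}$, i.e.\ a multiplicativity statement modulo roughly $2^{26}$ that the recursion does not supply; applying duality first only moves the same obstruction to weight $2-k$, where there is no cusp form and no analogue of Lemma~\ref{Hecke}. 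The paper gets these base cases from entirely different machinery that your proposal omits: the expansion of $f_{2-k,n}\mid U_2$ and $f_{2-k,2}\mid U_4$ at the cusp $0$ in terms of the level-$2$ forms $\psi$, $\Phi$, $\theta_{2-k}$, $\alpha_{2-k}$ (Theorems~\ref{swappoles} and~\ref{Up2}), whose transformation law introduces the factors $2^{12j}$ that yield the exponents $\nu$ and $\chi$ of Propositions~\ref{odd-coefficients} and~\ref{lower} and Lemma~\ref{ak2in2}.

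A second missing ingredient is the source of the exponent $\rho$ in the $b>a\geq1$ case. Your bookkeeping (``extract $2^{k-1}$, extract $\tau_k(2)$, absorb $a_k(m,2)\tau_k(n)$'') can only yield $2^{\gamma(b-a)}$ there, which is Lemma~\ref{i<j}; the improvement to $2^{\rho+\gamma(b-a)}$ rests on the base case $a_k(2m,4)\equiv0\pmod{2^{\rho+\gamma}}$ (Lemma~\ref{ak2m4}), which the paper proves by Kolberg's two-dissection of an explicit expression for $f_{2-k,4}$ --- there is no mechanism in your proposal that produces $\rho$. Separately, your route to Lemma~\ref{ak0n} via $a_k(0,n)=\alpha(\sigma_{k-1}(n)-\tau_k(n))$ is a legitimately different idea from the paper's explicit level-$2$ basis decomposition, and it does give $\eta=12$ for $k=12$ via Ramanujan's $\tau(n)\equiv\sigma_{11}(n)\pmod{2^8}$; but for the other weights the chain $E_4\equiv1\pmod{2^4}$, $E_6\equiv1\pmod{2^3}$ only gives $\tau_k(n)\equiv\tau(n)\pmod{2^3}$ or $2^4$, which falls well short of the $e_k=\eta-\omega$ (e.g.\ $7$ for $k=16$) that your argument requires, so that step also needs a stronger input than you have indicated.
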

Note that there are no results for $a=b$.  In these cases, the best possible exponent appears to be 0, since the corresponding coefficients are often odd.

The remainder of the paper is structured as follows:  In Sections 2,
3, and 4, we introduce notation for specific modular forms and
operators that will be used throughout the paper, prove a relation
between the coefficients $a_k(m, n)$ and a generalized Ramanujan
$\tau$-function, and compute the orders of poles of certain modular
forms at the cusp at $0$.  In Section 5, we review Kolberg's method
of two-dissections and use it to prove congruences for certain
coefficients. In Sections 6 and 7, we prove Theorem~\ref{main} for
the $a>b$ and $a<b$ cases, respectively.

The authors thank the referee for a careful and thoughtful review
that greatly improved the paper.  Additionally, the third author
thanks the Brigham Young University Department of Mathematics for
its generous support of his work on this project.

\section{Modular forms and Operators}
In this section we fix notation for certain operators on spaces of modular forms and for certain specific modular forms that will be used often.

\subsection{Operators on modular forms}

For $p$ a prime, and $f(z)=\sum_{n\geq n_0}a_nq^n$ a weakly
holomorphic modular  form, we define the $U_p$ operator by
$$(f|U_p)(z)=\sum_{n\geq n_0/p}a_{pn}q^n.$$
An alternative definition of $f|U_p$ that we will find useful is \cite[Thm. 4.5]{Apostol}
$$(f|U_p)(z)=\frac{1}{p}\sum_{j=0}^{p-1}f\left(\frac{z+j}p\right).$$
If $f$ has weight $k$ and level $N$, then $f|U_p$ is again a weakly
holomorphic modular  form of weight $k$ and level $N$ (if $p|N$) or
level $pN$ (if $p\nmid N$) \cite[Prop. 2.22]{Ono}.

We also define the operators $V_p$ by $(f|V_p)(z)=f(pz)$ and
the Hecke  operators $T_p$ by $(f|T_p)=(f|U_p)+p^{k-1}(f|V_p)$.
Note that $V_p$ takes modular forms of weight $k$ and level $N$ to
forms of weight $k$ and level $Np$, while $T_p$ preserves both
weight and level \cite[Prop. 2.2, Thm.  4.5]{Ono}.

\subsection{Holomorphic Modular Forms}

Let $$\sigma_k(n)=\sum_{d|n}d^k.$$  We define the Eisenstein series
$$E_4(z)=1+240\sum_{n=1}^\infty \sigma_{3}(n)q^n\quad\text{and}\quad E_6(z)=1-504\sum_{n=1}^\infty \sigma_{5}(n)q^n,$$
and we note that these are modular forms of level 1 and weights $4$ and $6$ respectively.  Next we define
$$S_4(z)=\frac{E_4(z)-E_4(2z)}{240},\qquad S_6(z)=\frac{E_6(z)-E_6(2z)}{-504}$$
$$T_4(z)=\frac{2^4E_4(2z)-E_4(z)}{2^4-1},\quad\text{and}\quad T_6(z)=\frac{2^6E_6(2z)-E_6(z)}{2^6-1}.$$
We note that each of these forms is a holomorphic modular form of level 2, and weight 4 or 6.  The forms $S_4$ and $S_6$ vanish at infinity to order 1, while the forms $T_4$ and $T_6$ vanish at 0 to order 1.  All of these forms have integer coefficients, and using the valence formula for level 2 modular forms \cite{El-Guindy}, $S_4$ and $T_4$ are easily seen to be nonvanishing in the upper half plane.

For $k\in\{12,16,18,20,22,26\}$, we have already noted that $M_k(1)$ is two-dimensional.  It is spanned by the standard Eisenstein series $E_k(z)$ and a unique normalized cusp form that is an eigenform of all the Hecke operators.  We will denote this eigenform by $\Delta_k(z)$, and its Fourier coefficients by $\tau_k(n)$.  Hence,
$$\Delta_k(z)=\sum_{n=1}^\infty \tau_k(n)q^n.$$
We note that $\Delta_{12}$ is the modular form $\Delta$ and $\tau_{12}$ is the Ramanujan $\tau$ function. Note that in addition to being the unique normalized cusp form in weight $k$, $\Delta_k$ is also the form denoted by us as $f_{k,-1}$.  Hence, we may  also write $\tau_k(n)=a_k(-1,n)$.
\subsection{Weakly holomorphic modular functions of level 2}
We note that $\Gamma_0(2)$ has cusps at 0 and at $\infty$.  Following \cite{Doud-Jenkins},  for $k<0$ we define $\alpha_k(z)$ (resp. $\theta_k$) to be the weakly holomorphic modular form of weight $k$ and level 2 that is holomorphic at $\infty$ and has a pole of minimal possible order at 0 (resp. holomorphic at 0 and has a pole of minimal possible order at $\infty$). The fact that there is a unique form with these properties is discussed in \cite{Doud-Jenkins}.  The following fact about these functions is easily proved:
\begin{theorem} Let $k<0$ be even.  If $k\equiv 2\pmod 4$, set $i=(6-k)/4$.  Then
$$\alpha_k(z)=T_6(z)/T_4(z)^i,\qquad \theta_k(z)=S_6(z)/S_4(z)^i$$
and
$$\theta_k(-1/(2z))=-2^{3-k} z^k \alpha_k(z).$$
If $k\equiv 0\pmod 4$, set $i=-k/4$.  Then
$$\alpha_k(z)=1/T_4(z)^i,\qquad \theta_k(z)=1/S_4(z)^i$$
and
$$\theta_k(-1/(2z))=2^{-k} z^k \alpha_k(z).$$
\end{theorem}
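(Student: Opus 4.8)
The plan is to identify $\alpha_k$ and $\theta_k$ explicitly with the stated quotients of $S_4,S_6,T_4,T_6$ and then to read off the relation between them from the behavior of these four level-$2$ forms under $z\mapsto-1/(2z)$. The tool that carries the argument is the collection of transformation laws, valid for all $z$ in the upper half plane,
$$S_4(-1/(2z))=2^{-4}z^4T_4(z),\qquad T_4(-1/(2z))=2^8z^4S_4(z),$$
$$S_6(-1/(2z))=-2^{-3}z^6T_6(z),\qquad T_6(-1/(2z))=-2^9z^6S_6(z).$$
Each follows from the $\SL_2(\Z)$-modularity of $E_4$ and $E_6$: for $j\in\{4,6\}$, substituting $w=2z$ gives $E_j(-1/(2z))=E_j(-1/w)=w^jE_j(w)=(2z)^jE_j(2z)$, while $E_j(2\cdot(-1/(2z)))=E_j(-1/z)=z^jE_j(z)$; inserting these into the definitions of $S_4,S_6,T_4,T_6$ and simplifying the rational constants ($240/15=2^4$, $63/504=2^{-3}$, and so on) yields the four identities. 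In particular $z\mapsto-1/(2z)$ interchanges the two cusps of $\Gamma_0(2)$, and since $T_4$ and $T_6$ take the value $1$ at $\infty$, the first and third identities show that $S_4$ and $S_6$ do not vanish at the cusp $0$; together with the orders of vanishing already recorded in this section, this is everything we shall need about these forms at the cusps.

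Next I would check that the candidate formulas have exactly the defining properties of $\alpha_k$ and $\theta_k$. Suppose first $k\equiv0\pmod{4}$, so $i=-k/4>0$. Then $1/T_4^i$ has weight $k$ and level $2$, is holomorphic and nowhere zero on the upper half plane, is holomorphic at $\infty$ with value $1$, and has a pole of order exactly $i$ at $0$. To see that this pole order is minimal, let $g$ be any weight-$k$, level-$2$ weakly holomorphic form that is holomorphic at $\infty$ and whose pole at $0$ has order at most $i$; then $g\,T_4^i$ lies in $M_0(\Gamma_0(2))=\mathbb C$, so $g$ is a constant multiple of $1/T_4^i$, and consequently a nonzero such $g$ has pole order exactly $i$ at $0$. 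Hence no nonzero weight-$k$, level-$2$ form holomorphic at $\infty$ has smaller pole order at $0$, so $\alpha_k=1/T_4^i$ (up to the normalization fixed in \cite{Doud-Jenkins}, which is consistent since $1/T_4^i$ has $q$-expansion $1+\cdots$). Replacing $T_4$ by $S_4$ and interchanging the two cusps gives $\theta_k=1/S_4^i$ by the same reasoning. When $k\equiv2\pmod{4}$, set $i=(6-k)/4\geq2$; then $T_6/T_4^i$ has weight $6-4i=k$, is holomorphic at $\infty$ with value $1$, and has a pole of order $i-1$ at $0$. The argument is the same, except that for $g$ as above with pole order at most $i-1$ at $0$, the product $g\,T_4^{i-1}$ lands in the one-dimensional space $M_2(\Gamma_0(2))$, which is spanned by a form (for instance $T_6/T_4$) that does not vanish at the cusp $0$; it follows that $g$ is a constant multiple of $T_6/T_4^i$, so $\alpha_k=T_6/T_4^i$ and, symmetrically, $\theta_k=S_6/S_4^i$.

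Finally the functional equations drop out by substituting the transformation laws into these explicit expressions. For $k\equiv2\pmod{4}$,
$$\theta_k(-1/(2z))=\frac{S_6(-1/(2z))}{S_4(-1/(2z))^i}=\frac{-2^{-3}z^6T_6(z)}{(2^{-4}z^4T_4(z))^i}=-2^{4i-3}z^{6-4i}\frac{T_6(z)}{T_4(z)^i}=-2^{3-k}z^k\alpha_k(z),$$
using $6-4i=k$ and $4i-3=3-k$; and for $k\equiv0\pmod{4}$,
$$\theta_k(-1/(2z))=\frac{1}{S_4(-1/(2z))^i}=\frac{1}{(2^{-4}z^4T_4(z))^i}=2^{4i}z^{-4i}\frac{1}{T_4(z)^i}=2^{-k}z^k\alpha_k(z),$$
using $-4i=k$. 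The only points that require genuine care are the clean derivation of the four transformation laws --- in particular keeping the powers of $2$ straight --- and the bookkeeping of orders of vanishing at the cusp $0$, the cusp of width $2$, including a check that the normalizations of $\alpha_k$ and $\theta_k$ used in \cite{Doud-Jenkins} are exactly the ones making the displayed quotients correct; everything else is routine manipulation of $q$-expansions together with the one-dimensionality of $M_0(\Gamma_0(2))$ and $M_2(\Gamma_0(2))$.
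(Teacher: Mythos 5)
Your proof is correct. The paper itself offers no argument for this theorem (it is merely asserted to be ``easily proved''), so there is nothing to compare against; your route --- deriving the four transformation laws $S_4(-1/(2z))=2^{-4}z^4T_4(z)$, etc., from the $\SL_2(\Z)$-modularity of $E_4$ and $E_6$, identifying $\alpha_k$ and $\theta_k$ with the stated quotients via the nonvanishing of $T_4$ and $S_4$ on the upper half plane together with the one-dimensionality of $M_0(\Gamma_0(2))$ and $M_2(\Gamma_0(2))$, and then substituting --- is surely the intended one, and I have checked that all the powers of $2$ ($240/15=2^4$, $504/63=2^3$) and the exponent bookkeeping ($6-4i=k$, $4i-3=3-k$, $4i=-k$) come out as claimed.
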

For each relevant value of $k$, we define $\mu_k$ to be the exponent
such that $\theta_k(-1/(2z))=\pm2^{\mu_k}z^k\alpha_k(z)$. We have
the following congruences on the coefficients of $\alpha_k$, which
define constants $\xi_k$.

\begin{lemma} Let $k\in\{-10,-14,-16,-18,-20,-24\}$.  Then $\alpha_k\equiv 1\pmod{ 2^{\xi_k}}$ with
$$\xi_{-10}=\xi_{-14}=\xi_{-18}=3,\quad\xi_{-20}=4,\quad\xi_{-24}=5,\quad\xi_{-16}=6.$$
\end{lemma}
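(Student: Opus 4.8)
The plan is to verify the claimed congruences directly from the explicit formulas for $\alpha_k$ given in the previous theorem, reducing everything to two-adic information about $T_4$ and $T_6$. First I would record the relevant two-adic expansions. From the definitions $T_4(z) = (2^4 E_4(2z) - E_4(z))/(2^4-1)$ and $T_6(z) = (2^6 E_6(2z) - E_6(z))/(2^6-1)$, one computes the Fourier expansions and finds that $T_4 \equiv 1 \pmod{2^4}$ (the constant term is $1$, and the coefficient of $q^n$ is $2^4\sigma_3(n/2)\cdot(\text{unit})^{-1}$ or $-\sigma_3(n)\cdot(\text{unit})^{-1}$, but a more careful accounting using $E_4 = 1 + 240\sum\sigma_3(n)q^n$ shows $T_4 \equiv 1 \pmod{2^4}$ after clearing the unit $15$, and in fact a sharper congruence modulo a higher power of $2$ will be needed for the $k \equiv 2 \pmod 4$ cases). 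More precisely I would establish $T_4 \equiv 1 \pmod{2^4}$ and analyze $T_6$ modulo $2^3$, since $2^6-1 = 63$ is odd and $T_6 \equiv 1 - (1-504\sum\sigma_5(n)q^n)/63 \cdot(\text{correction})$; the key point is that $504 = 2^3\cdot 63$, so $T_6 \equiv 1 \pmod{2^3}$.

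Next, for $k \equiv 0 \pmod 4$ I would use $\alpha_k = 1/T_4^i$ with $i = -k/4$. Since $T_4 = 1 + 2^4 g$ for an integral power series $g$, we get $\alpha_k = (1+2^4 g)^{-i} = 1 - 2^4 i g + \binom{i+1}{2}2^8 g^2 - \cdots$, which is $\equiv 1 \pmod{2^{4+v_2(i)}}$ where $v_2$ is the two-adic valuation. For $k = -16$ we have $i = 4$, $v_2(4) = 2$, giving $\xi_{-16} = 6$; for $k = -20$, $i = 5$, $v_2(5) = 0$, giving $\xi_{-20} = 4$; for $k = -24$, $i = 6$, $v_2(6) = 1$, giving $\xi_{-24} = 5$. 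This matches the claimed values exactly. For $k \equiv 2 \pmod 4$ I would use $\alpha_k = T_6/T_4^i$ with $i = (6-k)/4$; writing $T_6 = 1 + 2^3 h$ and $T_4^{-i} \equiv 1 \pmod{2^4}$, we get $\alpha_k \equiv (1+2^3 h)(1 + 2^4(\cdots)) \equiv 1 \pmod{2^3}$, giving $\xi_{-10} = \xi_{-14} = \xi_{-18} = 3$. (Here $k = -10, -14, -18$ correspond to $i = 4, 5, 6$, but the weaker $2^3$ from $T_6$ dominates regardless.)

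The main obstacle I anticipate is pinning down the exact two-adic valuation of the Fourier coefficients of $T_4$ and $T_6$ — in particular confirming that $T_4 \equiv 1 \pmod{2^4}$ exactly (not a higher power, which would inflate the exponents) and that no unexpected cancellation or reinforcement occurs when taking powers and products. This requires a clean lemma of the form: if $F \equiv 1 \pmod{2^r}$ with $F$ integral, then $F^{-i} \equiv 1 \pmod{2^{r + v_2(i)}}$, and (for the products) $\pmod{2^r}$ suffices when $v_2(i) = 0$. The binomial-series argument above handles this, but one must check that the error terms $\binom{-i}{2}2^{2r}g^2$, etc., are indeed divisible by at least $2^{r+v_2(i)}$ — which holds since $2r \geq r + v_2(i)$ as soon as $r \geq v_2(i)$, comfortably true here. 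Once this lemma is in place, the lemma follows by a finite case check over the six values of $k$.
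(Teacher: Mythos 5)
Your argument is correct, but it takes a different route from the paper: the paper disposes of this lemma in one line by citing Lemma 6.2 of the earlier Doud--Jenkins paper, whereas you reprove it from scratch. Your computation checks out: since $240/15 = 2^4$ and $15$ is odd, $T_4 = 1 + 2^4 g$ with $g$ integral (and $g \equiv -q + \cdots$ odd), and since $504/63 = 2^3$ and $63$ is odd, $T_6 = 1 + 2^3 h$ with $h$ integral and odd leading term; your binomial-series lemma $(1+2^r g)^{-i} \equiv 1 \pmod{2^{r+v_2(i)}}$ is valid here because $r(j-1) \geq v_2(i)$ for all $j \geq 2$ when $r = 4$ and $i \leq 6$, and plugging in $i = -k/4 \in \{4,5,6\}$ for $k \equiv 0 \pmod 4$ and $i = (6-k)/4 \in \{4,5,6\}$ with the extra factor $T_6 \equiv 1 \pmod{2^3}$ for $k \equiv 2 \pmod 4$ reproduces exactly the table of $\xi_k$. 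What your approach buys is a self-contained verification (and, as a bonus, it shows the exponents $\xi_k$ are sharp, since $g$ and $h$ have odd coefficients); what the paper's citation buys is brevity, at the cost of sending the reader to the reference where essentially this same computation is carried out.
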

\begin{proof} This follows from \cite[Lemma 6.2]{Doud-Jenkins}.\end{proof}

Finally, following Apostol \cite[pg. 87]{Apostol}, we define
$$\Phi(z)=\frac{\Delta(2z)}{\Delta(z)}$$
and $\psi(z)=1/\Phi(z)$.
We recall from \cite{Apostol} that both $\psi$ and $\Phi$ are in $M_0^!(2)$, both have integer Fourier coefficients,
$\Phi$ has a simple zero at $\infty$ and a simple pole at 0, $\psi$ has a simple pole at
$\infty$ and a simple zero at 0, and
$$\psi\left(\frac{-1}{2z}\right)=2^{12}\Phi(z).$$

\section{Hecke Operators}

Using Hecke operators, we now prove a two useful relation between certain coefficients $a_k(m,n)$ and the $\tau_k$ function.  In order to state and prove these relations, we define the symbol

$$\delta_{x,y}=\begin{cases}1&\text{if $y|x$},\cr0&\text{if $y\nmid x$}.\end{cases}$$

\begin{lemma}\label{Hecke} Let $k\in\{12,16,18,20,22,26\}$, let $m,n$ be positive integers, and let $p$ be a prime.  Then
$$a_k(m/p,n)+a_k(m,p)\tau_k(n)+p^{k-1}a_k(mp,n)=a_k(m,np)+p^{k-1}a_k(m,n/p).$$
\end{lemma}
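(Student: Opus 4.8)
The key idea is to apply the Hecke operator $T_p$ to the basis element $f_{k,m}$ and analyze the resulting weakly holomorphic modular form in two ways. Since $T_p$ preserves weight $k$ and level $1$, the form $f_{k,m}\mid T_p$ lies in $M_k^!(1)$, so it can be written uniquely as a (finite, since only finitely many negative exponents appear) linear combination of the basis elements $f_{k,j}$ with $j \geq -\ell$. The plan is to determine this linear combination explicitly by reading off the principal part (the coefficients of $q^{-j}$ for $j > 0$) and the constant term, and then to equate the $q^n$-coefficient of both sides.

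\medskip
First I would compute the principal part of $f_{k,m}\mid T_p = f_{k,m}\mid U_p + p^{k-1} f_{k,m}\mid V_p$. From the Fourier expansion $f_{k,m}(z) = q^{-m} + \sum_{n \geq \ell+1} a_k(m,n) q^n$, the operator $U_p$ contributes $q^{-m/p}$ to the principal part (present only when $p \mid m$, otherwise the term $q^{-m/p}$ is absent, consistent with our convention that $a_k(m/p,n)=0$ for nonintegral $m/p$), together with $\sum a_k(m, pn) q^n$, whose only term of nonpositive exponent is the constant term $a_k(m,p)$ coming from $pn = p$. Meanwhile $V_p$ contributes $q^{-mp}$ to the principal part. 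So $f_{k,m}\mid T_p$ has principal part $\delta_{m,p}\, q^{-m/p} + p^{k-1} q^{-mp}$ and constant term $a_k(m,p) + a_k(m,p/m)\cdot(\text{lower order})$; more carefully, the full contribution to nonpositive exponents is $\delta_{m,p} q^{-m/p} + p^{k-1} q^{-mp} + (a_k(m,p) + p^{k-1}a_k(m,1/p))$, where the last term accounts for the constant. This forces
$$f_{k,m}\mid T_p = \delta_{m,p}\, f_{k,m/p} + p^{k-1} f_{k,mp} + c\, f_{k,0} \quad\text{(if $0 \geq -\ell$, i.e. always here),}$$
and matching constant terms shows $c$ equals $a_k(m,p) + p^{k-1}a_k(m,1/p) - [\text{constant terms of the other pieces}]$. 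The cleanest bookkeeping is to recognize that $f_{k,0} = q^0 + \sum_{n\geq \ell+1}\cdots$ and that $f_{k,m/p}$, $f_{k,mp}$ have vanishing constant terms when $m/p, mp \neq 0$, so $c = a_k(m,p) + p^{k-1}a_k(m,1/p)$, with the convention making $a_k(m,1/p)=0$.

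\medskip
Next I would read off the $q^n$-coefficient on both sides. On the right-hand side, the coefficient of $q^n$ in $\delta_{m,p} f_{k,m/p} + p^{k-1} f_{k,mp} + (a_k(m,p)+p^{k-1}a_k(m,1/p)) f_{k,0}$ is
$$a_k(m/p,n) + p^{k-1} a_k(mp,n) + \bigl(a_k(m,p) + p^{k-1}a_k(m,1/p)\bigr) a_k(0,n).$$
On the left-hand side, the coefficient of $q^n$ in $f_{k,m}\mid U_p$ is $a_k(m,pn)$, and in $p^{k-1} f_{k,m}\mid V_p$ it is $p^{k-1} a_k(m, n/p)$ (zero unless $p \mid n$). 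Equating and rearranging gives
$$a_k(m/p,n) + p^{k-1}a_k(mp,n) + a_k(m,p)a_k(0,n) + p^{k-1}a_k(m,1/p)a_k(0,n) = a_k(m,np) + p^{k-1}a_k(m,n/p).$$
To match the stated identity exactly I would use $a_k(-1,n) = \tau_k(n)$, but the relation as written has $a_k(m,p)\tau_k(n)$ rather than $a_k(m,p)a_k(0,n)$; I expect the resolution is that either $f_{k,0}$ should be $\Delta_k = f_{k,-1}$ in the decomposition (i.e. the ``extra'' form that $T_p$ produces when landing in the two-dimensional obstruction space is the cusp form, not $f_{k,0}$), which is precisely the point where two-dimensionality of $M_k(1)$ matters — and here I would need to check that the constant term of $f_{k,m}\mid T_p$ after subtracting $\delta_{m,p}f_{k,m/p} + p^{k-1}f_{k,mp}$ is supplied by a multiple of $\Delta_k$ with the coefficient $a_k(m,p)$, using the fact that $\Delta_k$ has leading coefficient in degree $1$ and the terms $f_{k,m/p}, f_{k,mp}$ contribute nothing in degrees $-\ell,\dots,0$.

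\medskip
\textbf{The main obstacle.} The genuinely delicate step is pinning down exactly which modular form (and with which coefficient) accounts for the ``overflow'' when $f_{k,m}\mid T_p$ is expanded in the canonical basis — in other words, correctly identifying the term $a_k(m,p)\tau_k(n)$. This is where the two-dimensionality of $M_k(1)$ enters: the canonical basis $\{f_{k,j} : j \geq -\ell\}$ with $\ell = 1$ starts at $j = -1$, so $\Delta_k = f_{k,-1}$ is a basis element, and the principal-part/low-order matching must be done carefully across the degrees $-1, 0, \dots$ simultaneously. I would organize this by writing $f_{k,m}\mid T_p - \delta_{m,p}f_{k,m/p} - p^{k-1}f_{k,mp}$, observing it has no pole (its principal part cancels), hence lies in $M_k(1) = \langle E_k, \Delta_k\rangle$, and then using its $q^0$ and $q^1$ coefficients to solve for the $E_k$ and $\Delta_k$ components; the $E_k$-component must vanish (by comparing with the expected form of $f_{k,m}\mid T_p$, whose constant term from $U_p$ is $a_k(m,p)$ and which cannot contain the normalized Eisenstein series since $f_{k,j}$ for $j\ge 0$ already span a complement), leaving the $\Delta_k$-component, whose $q^1$-coefficient is $a_k(m,p)$. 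Everything else is routine coefficient extraction and application of the convention $a_k(m,n)=0$ for out-of-range indices.
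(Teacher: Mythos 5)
Your overall strategy is exactly the paper's: expand $f_{k,m}\mid T_p$ in the canonical basis by matching its low-order Fourier coefficients, then read off the coefficient of $q^n$ two ways. However, your execution contains a concrete error that derails the middle of the argument. You claim that the term $a_k(m,p)$ produced by $U_p$ is the \emph{constant} term of $f_{k,m}\mid T_p$ (``the constant term $a_k(m,p)$ coming from $pn=p$''). But $pn=p$ gives $n=1$, so this is the coefficient of $q^1$, not of $q^0$; the constant term of $f_{k,m}\mid U_p$ is the coefficient of $q^0$ in $f_{k,m}$, which is $0$ for $m\ge 1$, and $V_p$ likewise contributes nothing in degree $0$. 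Because of this misplacement you first write the decomposition with a term $c\,f_{k,0}$, $c=a_k(m,p)+\cdots$, which would yield the (incorrect) identity involving $a_k(m,p)a_k(0,n)$; you then notice the mismatch with the stated lemma and only speculate (``I expect the resolution is\dots'') about how to repair it. A proof cannot rest on that hedge.

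The repair is short, and your final paragraph is circling it: since $\ell=1$, every basis element satisfies $f_{k,j}=q^{-j}+O(q^2)$, so a form in $M_k^!(1)$ is determined by its coefficients of $q^j$ for $j\le 1$. One computes directly that
$$f_{k,m}\mid T_p \;=\; p^{k-1}q^{-mp}+\delta_{m,p}q^{-m/p}+0\cdot q^0+a_k(m,p)\,q+O(q^2),$$
and matching through $q^1$ forces
$$f_{k,m}\mid T_p = p^{k-1}f_{k,mp}+\delta_{m,p}f_{k,m/p}+a_k(m,p)\,\Delta_k,$$
with no $f_{k,0}$ (equivalently, no Eisenstein) component at all --- its coefficient is the constant term, which is $0$, not ``a complement-spanning'' argument as you suggest. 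Once this identity is in hand, comparing $q^n$-coefficients with $f\mid T_p=f\mid U_p+p^{k-1}f\mid V_p$ gives the lemma immediately, exactly as in your last step.
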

\begin{proof} Applying the Hecke operator $T_p$ to the modular form $f_{k,m}$, we find
\begin{align*}f_{k,m}|T_p&=p^{k-1}q^{-mp}+\delta_{m,p}q^{-m/p}+a_k(m,p)q+O(q^2)\cr&=p^{k-1}f_{k,mp}+\delta_{m,p}f_{k,m/p}+a_k(m,p)\Delta_k,\end{align*}
The $n$th coefficient of this form is
$$p^{k-1}a_k(mp,n)+a_k(m/p,n)+a_k(m,p)\tau_k(n).$$
By the definition of the Hecke operator, the $n$th coefficient of $f_{k,m}|T_p$ is
$$a_k(m,np)+p^{k-1}a_k(m,n/p).$$
Hence, the lemma is proved.\end{proof}

For the second relation, we apply the $T_p$ operator twice.

\begin{lemma}\label{Hecke2} Let $k\in\{12,16,18,20,22,26\}$, let $m,n$ be positive integers, and let $p$ be a prime.  Then
\begin{align*}
p^{2k-2}a_k(mp^2,n)+&(1+\delta_{m,p})p^{k-1}a_k(m,n)+a_k(m/p^2,n)+a_k(m,p^2)\tau_k(n)\cr&=p^{2k-2}a_k(m,n/p^2)+(1+\delta_{n,p})p^{k-1}a_k(m,n)+a_k(m,np^2).\end{align*}
\end{lemma}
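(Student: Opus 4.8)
The plan is to follow the pattern of the proof of Lemma~\ref{Hecke}: apply the Hecke operator $T_p$ to $f_{k,m}$ twice, write the result in the canonical basis for $M_k^!(1)$, and compare its $n$th Fourier coefficient with the $n$th coefficient obtained directly from $T_p=U_p+p^{k-1}V_p$. Recall from the proof of Lemma~\ref{Hecke} that $f_{k,m}|T_p=p^{k-1}f_{k,mp}+\delta_{m,p}f_{k,m/p}+a_k(m,p)\Delta_k$. Applying $T_p$ again and using this same identity for $f_{k,mp}$ (where $\delta_{mp,p}=1$) and, when $p\mid m$, for $f_{k,m/p}$ (where $\delta_{m,p}\,\delta_{m/p,p}=\delta_{m,p^2}$), together with $\Delta_k|T_p=\tau_k(p)\Delta_k$ since $\Delta_k$ is a normalized Hecke eigenform, I would obtain
\begin{align*}
f_{k,m}|T_p^2 &= p^{2k-2}f_{k,mp^2}+(1+\delta_{m,p})p^{k-1}f_{k,m}+\delta_{m,p^2}f_{k,m/p^2}\\
&\qquad+\bigl(p^{k-1}a_k(mp,p)+a_k(m/p,p)+a_k(m,p)\tau_k(p)\bigr)\Delta_k.
\end{align*}

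The one point that needs an argument is the identification of the coefficient of $\Delta_k$. Applying Lemma~\ref{Hecke} with $n$ replaced by the prime $p$ gives $p^{k-1}a_k(mp,p)+a_k(m/p,p)+a_k(m,p)\tau_k(p)=a_k(m,p^2)+p^{k-1}a_k(m,1)$, and since $m$ is a positive integer we have $m\ne-\ell$ and $1<\ell+1$, so $a_k(m,1)=0$; hence this coefficient equals $a_k(m,p^2)$. (Equivalently, it is the $q^1$-coefficient of $f_{k,m}|T_p^2$, since no $f_{k,j}$ contributes a $q^1$ term.) Reading off the $n$th Fourier coefficient of the resulting expansion, and using the convention that $a_k$ vanishes on nonintegral or out-of-range arguments so that $\delta_{m,p^2}f_{k,m/p^2}$ contributes exactly $a_k(m/p^2,n)$ and $\Delta_k$ contributes $\tau_k(n)$, produces the left-hand side of the lemma.

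On the other hand, iterating the elementary observation that the $n$th Fourier coefficient of $g|T_p$ (for $g$ of weight $k$) is the $pn$th coefficient of $g$ plus $p^{k-1}$ times its $(n/p)$th coefficient, one finds that the $n$th coefficient of $f_{k,m}|T_p^2$ equals
$$a_k(m,np^2)+(1+\delta_{n,p})p^{k-1}a_k(m,n)+p^{2k-2}a_k(m,n/p^2),$$
which is the right-hand side of the lemma. Equating the two expressions for the $n$th coefficient completes the proof.

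I expect the only real work to be careful bookkeeping of the indicator symbols and of the several terms that vanish by the $a_k$ convention; in particular one should check that the displayed formula for $f_{k,m}|T_p^2$ is the honest identity in each of the cases $p\nmid m$, $p\mid m$ with $p^2\nmid m$, and $p^2\mid m$. The hypothesis that $m$ is positive (rather than permitting $m=-\ell$) is exactly what kills the $p^{k-1}a_k(m,1)$ term coming out of Lemma~\ref{Hecke}, and no idea beyond those already used for Lemma~\ref{Hecke} is needed.
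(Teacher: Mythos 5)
Your proof is correct and takes essentially the same approach as the paper: both express $f_{k,m}|T_p|T_p$ in the canonical basis as $p^{2k-2}f_{k,mp^2}+(1+\delta_{m,p})p^{k-1}f_{k,m}+\delta_{m,p^2}f_{k,m/p^2}+a_k(m,p^2)\Delta_k$ and equate the $n$th Fourier coefficient computed from this decomposition with the one computed directly from the definition of $T_p$. The only cosmetic difference is that the paper reads off the principal part and $q^1$-coefficient directly, whereas you obtain the decomposition by iterating the identity from Lemma~\ref{Hecke} and identify the $\Delta_k$-coefficient via Lemma~\ref{Hecke} with $n=p$; these are equivalent.
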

\begin{proof}
Applying $T_p$ to $f_{k,m}|T_p$, we obtain
\begin{align*}f_{k,m}|T_p|T_p&=p^{2k-2}q^{-mp^2}+(1+\delta_{m,p})p^{k-1}q^{-m}+\delta_{m,p^2}q^{-m/p^2}+a_k(m,p^2)q+O(q^2)\cr&=p^{2k-2}f_{k,mp^2}+(1+\delta_{m,p})p^{k-1}f_{k,m}+\delta_{m,p^2}f_{k,m/p^2}+a_k(m,p^2)\Delta_k.\end{align*}
Computing the coefficient of $q^n$ in this expression gives the left-hand side of the desired equation; computing it directly from the definition of $f_{k,m}|T_p|T_p$ gives the right hand side.

\end{proof}

For the remainder of this paper, we will use these lemmas only in the case $p=2$.

We now prove a divisibility result for the coefficients $\tau_k$.  In addition to being important in the remainder of the paper, under the identification $\tau_k(n)=a_k(-1,n)$ this result is part of Theorem~\ref{main}.
\begin{lemma}\label{tau} Let $k\in\{12,16,18,20,22,26\}$.  For a nonnegative integer $b$ and a positive odd integer $n$, we have
$$\tau_k(2^bn)\equiv 0\pmod {2^{\gamma b}}.$$
\end{lemma}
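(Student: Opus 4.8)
The plan is to prove the congruence $\tau_k(2^b n)\equiv 0\pmod{2^{\gamma b}}$ by induction on $b$, using the Hecke recursion at $p=2$ together with the multiplicativity of $\tau_k$ and a base case $\tau_k(2)\equiv 0\pmod{2^\gamma}$. Since $\Delta_k$ is a normalized Hecke eigenform, we have $\tau_k(2^b n)=\tau_k(2^b)\tau_k(n)$ for odd $n$, so it suffices to prove $\tau_k(2^b)\equiv 0\pmod{2^{\gamma b}}$; the factor $\tau_k(n)$ only helps. The eigenvalue recursion reads
$$\tau_k(2^b)=\tau_k(2)\,\tau_k(2^{b-1})-2^{k-1}\,\tau_k(2^{b-2}),$$
valid for $b\ge 2$. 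If we have shown $\tau_k(2)\equiv 0\pmod{2^\gamma}$ and inductively $2^{\gamma(b-1)}\mid \tau_k(2^{b-1})$ and $2^{\gamma(b-2)}\mid\tau_k(2^{b-2})$, then the first term is divisible by $2^{\gamma b}$ and the second by $2^{k-1+\gamma(b-2)}$; since $k-1\ge 11\ge 2\gamma$ for all six weights (the largest $\gamma$ is $5$, occurring for $k=22$, where $k-1=21\ge 10$), we get $k-1+\gamma(b-2)\ge \gamma b$, and the induction closes.

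The heart of the argument is therefore the base case: $\tau_k(2)\equiv 0\pmod{2^\gamma}$, and more precisely $v_2(\tau_k(2))\ge\gamma$. I would obtain this from an explicit description of $\Delta_k$ in terms of the level-$2$ forms already introduced, or directly from known formulas for these six eigenforms. One clean route: for each weight $k$, $\Delta_k$ lies in the one-dimensional space of cusp forms, so it equals an explicit monomial in $E_4,E_6,\Delta$ (e.g. $\Delta_{12}=\Delta$, $\Delta_{16}=E_4\Delta$, $\Delta_{18}=E_6\Delta$, $\Delta_{20}=E_4^2\Delta$, $\Delta_{22}=E_4E_6\Delta$, $\Delta_{26}=E_4^2E_6\Delta$), and one reads off $\tau_k(2)$ from $\tau(1)=1$, $\tau(2)=-24$, together with the coefficients of $E_4,E_6$: this gives $\tau_{12}(2)=-24$, $\tau_{16}(2)=216$, $\tau_{18}(2)=-528$, $\tau_{20}(2)=456$, $\tau_{22}(2)=-288$, $\tau_{26}(2)=-48$, whose $2$-adic valuations are $3,3,4,3,5,4$ — matching $\gamma$ in every case. (Here I am using $[q^1](E_4\Delta)=240+\tau(2)$ etc.) Alternatively one can invoke the Doud–Jenkins machinery: writing $\Delta_k$ via the level-$2$ functions $S_4,S_6,T_4,T_6,\Phi$ and the congruence $\alpha_k\equiv 1\pmod{2^{\xi_k}}$, one gets $v_2(\tau_k(2))\ge\xi_{k-2}$ with $\xi_{k-2}=\gamma$ by the stated values.

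The main obstacle is purely the bookkeeping of the base case — confirming the exact $2$-adic valuation $v_2(\tau_k(2))=\gamma$ for each of the six weights and checking that no weight forces a smaller exponent. The inductive step is then completely mechanical once one notes the uniform inequality $k-1\ge 2\gamma$, which holds with room to spare for all $k\in\{12,16,18,20,22,26\}$. I would present the base case as a short explicit computation (a table of the six values $\tau_k(2)$) and then give the two-line induction; the multiplicativity reduction handles the odd part $n$ at no cost.
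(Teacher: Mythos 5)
Your proof is correct and follows essentially the same route as the paper: induction on $b$ via the Hecke eigenvalue recursion at $p=2$, using $2^\gamma\mid\tau_k(2)$ for the base case and $k-1>2\gamma$ to kill the $2^{k-1}$ term. The only additions are the (harmless) multiplicativity reduction to $\tau_k(2^b)$ and the explicit table of values $\tau_k(2)$ verifying the base case, which the paper simply asserts.
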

\begin{proof} We note that $\Delta_k$ is the unique normalized cusp form of weight $k$, so that it is an eigenform of all the Hecke operators.  Hence, we have (\cite[Theorem 3]{Atkin-Lehner}) that
$$\tau_k(2^{b+1}n)=\tau_k(2)\tau_k(2^bn)-2^{k-1}\tau_k(2^{b-1}n).$$
The result then follows from the fact that $\tau_k(2)$ is divisible by $2^\gamma$ by induction on $b$, using that $k-1>2\gamma$.
\end{proof}

Another way to write the result of this lemma is to note that $a_k(-1,2^bn)\equiv 0\pmod {2^{\gamma b}}$ for $b\geq0$.
\section{Operators acting on weakly holomorphic modular forms}

The standard $U_p$ and $V_p$ operators take weakly holomorphic modular forms to weakly holomorphic modular forms, so they preserve modularity and holo\-morph\-icity in the upper half plane.  In this section, we study the effects that the $U_p$ and $V_p$ operators have on poles of weakly holomorphic modular forms.  Because the actions of $U_p$ and $V_p$ are easily described in terms of Fourier expansions, the effects on poles at $\infty$ are easily seen.  Hence, we concentrate on studying the poles at $0$.  A basic result is the following.

\begin{theorem}\cite[Corollary 4.2]{Doud-Jenkins}\label{swappoles}
Let $f$ be a weakly holomorphic modular form of even weight $k$ and level 1, and let $f_p=f|U_p$.  Then
$$p(pz)^{-k}f_p(-1/pz)=-f(z)+pf_p(pz)+p^kf(p^2z)$$
and $p(pz)^{-k}f_p(-1/pz)$ is modular of weight $k$ and level $p$.
\end{theorem}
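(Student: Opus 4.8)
The plan is to compute $f_p(-1/(pz))$ directly from the averaging formula $(f|U_p)(z)=\frac1p\sum_{j=0}^{p-1}f\bigl((z+j)/p\bigr)$ and to simplify each summand using the level-$1$ modularity of $f$. Substituting $z\mapsto -1/(pz)$ gives
$$f_p(-1/(pz))=\frac1p\sum_{j=0}^{p-1}f\!\left(\frac{jpz-1}{p^2z}\right)=\frac1p\sum_{j=0}^{p-1}f(M_jz),\qquad M_j=\smat{jp}{-1}{p^2}{0},$$
where each $M_j$ has determinant $p^2$ and is primitive (the entry $-1$ forces the gcd of the entries to be $1$). The idea is to factor each $M_j$ as $\gamma_jA_j$ with $\gamma_j=\smat{a_j}{b_j}{c_j}{d_j}\in\SL_2(\Z)$ and $A_j$ upper triangular, so that $f(M_jz)=(c_jz+d_j)^kf(A_jz)$ by modularity.

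For $j=0$ one has $M_0=\smat{0}{-1}{1}{0}\smat{p^2}{0}{0}{1}$, so the $S$-transformation gives $f(M_0z)=(p^2z)^kf(p^2z)$. For $1\le j\le p-1$, since $\gcd(j,p)=1$ I can choose integers $a,b$ with $aj+bp=-1$ and set $\gamma_j=\smat{a}{b}{p}{-j}\in\SL_2(\Z)$; a one-line matrix product shows $\gamma_jM_j=\smat{-p}{-a}{0}{-p}$, hence $M_jz=\gamma_j^{-1}(z+a/p)$, and modularity together with the evenness of $k$ yields $f(M_jz)=(pz)^kf(z+a/p)$. The crucial point is that $f(z+a/p)$ depends only on $a\bmod p$ (period $1$) and that $a\equiv -j^{-1}\pmod p$, so as $j$ runs over $\{1,\dots,p-1\}$ the relevant residue runs over all of $(\Z/p\Z)^\times$. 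Therefore $\sum_{j=1}^{p-1}f(M_jz)=(pz)^k\sum_{r=1}^{p-1}f(z+r/p)$, and recognizing $\sum_{r=0}^{p-1}f(z+r/p)=p\,f_p(pz)$ from the same averaging formula, this equals $(pz)^k\bigl(p\,f_p(pz)-f(z)\bigr)$. Adding the $j=0$ term, dividing by $p$, and multiplying through by $p(pz)^{-k}$ collapses all powers of $p$ and $z$ and leaves exactly $-f(z)+p\,f_p(pz)+p^kf(p^2z)$.

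For the modularity assertion I would set $g(z)=p(pz)^{-k}f_p(-1/(pz))$ (essentially $f_p$ hit by the Fricke involution $W_p=\smat{0}{-1}{p}{0}$) and verify its transformation law by hand: for $\gamma=\smat{a}{b}{c}{d}\in\Gamma_0(p)$ the matrix identity $W_p\gamma=\gamma'W_p$ with $\gamma'=\smat{d}{-c/p}{-pb}{a}$ exhibits $\gamma'$ as an element of $\Gamma_0(p)$ (this uses $p\mid c$), and pushing the $\Gamma_0(p)$-modularity of $f_p$ — which holds because $f$ has level $1$ and $p\nmid 1$ — through this conjugation produces precisely the automorphy factor $(cz+d)^k$ for $g$. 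Holomorphy of $g$ on the upper half plane is immediate since $-1/(pz)$ lies there, and the poles at the cusps are of finite order because $f_p$ is weakly holomorphic; alternatively, one may simply invoke that $W_p$ normalizes $\Gamma_0(p)$.

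The main obstacle is the bookkeeping in the middle step: getting the coset factorization of each $M_j$ right and, above all, checking that the translation parameters sweep out exactly the nonzero residues modulo $p$, which is what lets the partial sum $\sum_{j=1}^{p-1}f(M_jz)$ reassemble into $p\,f_p(pz)-f(z)$ rather than an unrelated combination of translates of $f$. Everything else is routine tracking of automorphy factors, where the evenness of $k$ must be used to discard signs.
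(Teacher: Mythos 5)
Your computation is correct and is essentially the argument this paper has in mind: the statement itself is only quoted from \cite{Duke-Jenkins}'s companion \cite{Doud-Jenkins}, but the proof of the analogous Theorem~\ref{Up2} here runs exactly your computation one level up (split off the $j=0$ term, absorb each remaining matrix into an $\SL_2(\Z)$ factor times a translation by $-j^{-1}/p$, and reassemble the translates into $pf_p(pz)-f(z)$). The only point where you diverge is the modularity claim, which the paper's method would establish by recognizing the right-hand side as $-f+p\,(f|T_p|V_p)$, a combination of operators known to preserve weight $k$ and land in level $p$, whereas you conjugate by the Fricke involution; both routes are valid.
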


We wish to extend this theorem to study $f|U_{p^2}$.  Using a proof similar to that found in \cite{Doud-Jenkins}, we obtain
\begin{theorem}\label{Up2} Let $f$ be a weakly holomorphic modular form of even weight $k$ and level 1, and let $f_{p^2}=f|U_{p^2}$ and $f_p=f|U_p$.  Then
$$p(pz)^{-k}f_{p^2}(-1/pz)=-f_p(z)+pf_{p^2}(pz)-p^{k-1}f(pz)+p^kf_p(p^2z)+p^{2k-1}f(p^3z).$$
In addition, $p(pz)^{-k}f_{p^2}(-1/pz)$ is modular of weight $k$ and level $p$.
\end{theorem}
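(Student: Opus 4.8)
The plan is to follow the proof of Theorem~\ref{swappoles} (\cite[Corollary 4.2]{Doud-Jenkins}), working directly from the averaging description of $U_{p^2}$; one cannot simply feed $f_p$ back into Theorem~\ref{swappoles}, since $f_p$ has level $p$ rather than level $1$. Iterating the identity $(g|U_p)(z)=\frac1p\sum_{j=0}^{p-1}g\!\left(\frac{z+j}{p}\right)$ gives $f_{p^2}(z)=\frac1{p^2}\sum_{\ell=0}^{p^2-1}f\!\left(\frac{z+\ell}{p^2}\right)$, and therefore $f_{p^2}\!\left(\frac{-1}{pz}\right)=\frac1{p^2}\sum_{\ell=0}^{p^2-1}f\!\left(\frac{\ell pz-1}{p^3z}\right)$. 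The idea is to evaluate this last sum by splitting the index $\ell$ into three strata according to its divisibility by $p$: the single term $\ell=0$; the $p-1$ terms with $\ell=jp$, $1\le j\le p-1$; and the remaining $p^2-p$ terms with $p\nmid\ell$.

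For $\ell$ in either of the last two strata, I would write $\frac{\ell pz-1}{p^3z}=\gamma_\ell w_\ell$ with an explicit $\gamma_\ell\in\SL_2(\Z)$ and a point $w_\ell$ equal to $\frac{p^2z+u}{p}$ (when $\ell=jp$) or $\frac{pz+u}{p^2}$ (when $p\nmid\ell$); comparing the determinant $p^3$ of $\smat{\ell p}{-1}{p^3}{0}$ with the content of its first column pins down the upper-triangular cofactor as $\smat{p^2}{u}{0}{p}$ with $u\equiv-j^{-1}\pmod p$ in the first case and $\smat{p}{u}{0}{p^2}$ with $u\equiv-\ell^{-1}\pmod{p^2}$ in the second, and $\gamma_\ell$ then has bottom row $(p,-u)$, resp.\ $(p^2,-u)$. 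Since $f$ has level $1$, the $\ell$-th summand becomes $(pw_\ell-u)^kf(w_\ell)=(p^2z)^kf\!\left(\frac{p^2z+u}{p}\right)$, resp.\ $(p^2w_\ell-u)^kf(w_\ell)=(pz)^kf\!\left(\frac zp+\frac u{p^2}\right)$, the automorphy factor collapsing by the choice of $w_\ell$. As $j$ (resp.\ $\ell$) runs over its residues, $u$ runs over the invertible residues modulo $p$ (resp.\ modulo $p^2$); completing each of these to a sum over \emph{all} residues rebuilds $p\,f_p(p^2z)$ (resp.\ $p^2f_{p^2}(pz)$) and leaves a correction term $-(p^2z)^kf(pz)$ (resp.\ $-(pz)^k\cdot p\,f_p(z)$) coming from the non-invertible residues. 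The $\ell=0$ term contributes $(p^3z)^kf(p^3z)$ via $f(-1/w)=w^kf(w)$. Adding the five pieces, factoring out $z^k$, and dividing by $p$ yields the stated identity. The real content is this bookkeeping --- producing the matrices $\gamma_\ell$, checking they are unimodular, and verifying that the automorphy factors simplify as claimed --- which is where I expect essentially all the opportunity for error to lie.

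For the modularity claim I would argue as for Theorem~\ref{swappoles}. First, $f_{p^2}\in M_k^!(p)$: by \cite[Prop. 2.22]{Ono}, $f|U_p$ has level $p$ since $p\nmid1$, and a second application of $U_p$ leaves the level equal to $p$ since $p\mid p$. The substitution $z\mapsto-1/(pz)$ is the Fricke involution attached to $\smat{0}{-1}{p}{0}$, and this matrix normalizes $\Gamma_0(p)$; consequently $p(pz)^{-k}f_{p^2}(-1/(pz))$ --- a fixed nonzero scalar multiple of the slash of $f_{p^2}$ by $\smat{0}{-1}{p}{0}$ --- is holomorphic on the upper half plane, has poles of finite order at the cusps, and satisfies the weight-$k$ transformation law for $\Gamma_0(p)$; that is, it is modular of weight $k$ and level $p$.
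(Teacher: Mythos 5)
Your argument is correct --- I checked the matrix decompositions, the automorphy factors, and the final assembly (the five pieces, divided by $p^2$ and multiplied by $p(pz)^{-k}$, give exactly the stated identity) --- but it takes a genuinely different route from the paper's. The paper writes $f_{p^2}=f_p|U_p$ and averages only over the $p$ translates $\frac{-1/z+j}{p}$ of $f_p$: for $j\neq 0$ it uses matrices $\smat{j}{b_j}{p}{j'}\in\Gamma_0(p)$ together with the fact that $f_p$ is itself modular of level $p$, and for the single $j=0$ term it expands $f_p(-1/(pz))$ by Theorem~\ref{swappoles}; so, contrary to your opening remark, the paper \emph{does} feed $f_p$ into Theorem~\ref{swappoles}, just for that one term rather than wholesale. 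You instead unfold all the way to the $p^2$ translates of the level-one form $f$ and stratify the index $\ell$ by $\gcd(\ell,p^2)$, trading the appeal to Theorem~\ref{swappoles} and to the $\Gamma_0(p)$-modularity of $f_p$ for heavier coset bookkeeping; the payoff is a self-contained computation in which each of the five terms is visibly either a completed residue sum ($pf_{p^2}(pz)$, $p^kf_p(p^2z)$, and the $\ell=0$ term $p^{2k-1}f(p^3z)$) or a correction from the non-invertible residues ($-f_p(z)$ and $-p^{k-1}f(pz)$); the only points worth writing out explicitly are that $j\mapsto -j^{-1}$ and $\ell\mapsto-\ell^{-1}$ are bijections of the unit groups mod $p$ and mod $p^2$, and that the $1$-periodicity of $f$ lets you sum over residue classes of $u$ rather than particular integers. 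For the final modularity claim the paper identifies the right-hand side with $p(f|T_p|T_p|V_p)-f|T_p-p^kf|V_p$, whereas you invoke the Fricke involution normalizing $\Gamma_0(p)$; both are standard and correct.
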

\begin{proof}
We proceed as in the proof of \cite[Lemma 4.1]{Doud-Jenkins}.  For an integer $j$ with $1\leq j\leq p-1$, let $j'$ be the unique solution to $jj'\equiv 1\pmod p$ with $-(p-1)<j'<-1$, and set $b_j=\frac{jj'-1}p$.  We then write
\begin{align*}
pz^{-k}f_{p^2}(-1/z)&=pz^{-k}\frac1p\sum_{j=0}^{p-1}f_p\left(\frac{-1/z+j}{p}\right)\cr
&=z^{-k}\sum_{j=0}^{p-1}f_p\left(\frac{jz-1}{pz}\right)\cr
&=z^{-k}\sum_{j=1}^{p-1}f_p\left(\begin{pmatrix}j&b_j\cr
p&j'\end{pmatrix}\frac{z-j'}p\right)+z^{-k}f_p\left(\frac{-1}{pz}\right)\cr
&=z^{-k}\sum_{j=1}^{p-1}\left(p\left(\frac{z-j'}p\right)+j'\right)^kf_p\left(\frac{z-j'}p\right)+z^{-k}f_p\left(\frac{-1}{pz}\right)\cr
&=z^{-k}\sum_{j=1}^{p-1}z^kf_p\left(\frac{z-j'}p\right)\cr &\qquad
\qquad+z^{-k}\left(\frac{(pz)^k}p\left(-f(z)+pf_p(pz)+p^kf(p^2z)\right)\right)\cr
&=\sum_{j=0}^{p-1}f_p\left(\frac{z+j}p\right)-f_p(z/p)+p^{k-1}\left(-f(z)+pf_p(pz)+p^kf(p^2z)\right)\cr
&=pf_{p^2}(z)-f_p(z/p)-p^{k-1}f(z)+p^kf_p(pz)+p^{2k-1}f(p^2z)\cr
\end{align*}
The formula for $p(pz)^{-k}f_{p^2}(-1/pz)$ follows by replacing $z$ by $pz$ in this equation.  The statement about the level and the weight is immediate, once we notice that
$$-f_p(z)+pf_{p^2}(pz)-p^{k-1}f(pz)+p^kf_p(p^2z)+p^{2k-1}f(p^3z)$$
is equal to
$$p(f|T_p|T_p|V_p)-f|T_p-p^kf|V_p.$$
\end{proof}

We also note the trivial identity below that allows us to find the pole at 0 of $f|V_p$, where $f$ is a weakly holomorphic form of level 1.
\begin{proposition}\label{Vp}
Let $f$ be a weakly holomorphic modular form of weight $k$ and level 1.  Then
$$z^{-k}(f|V_p)(-1/pz)=f(z).$$
\end{proposition}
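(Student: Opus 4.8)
The plan is to unwind the definition of the $V_p$ operator and then apply the modular transformation law of $f$ for the single matrix $S=\smat{0}{-1}{1}{0}\in\SL_2(\Z)$. Since everything reduces to substituting one formula into another, this will be essentially a one-line computation.

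First I would use that, by definition, $(f|V_p)(z)=f(pz)$, so substituting $z\mapsto -1/(pz)$ gives $(f|V_p)(-1/pz)=f\bigl(p\cdot(-1/pz)\bigr)=f(-1/z)$. Hence the left-hand side of the claimed identity equals $z^{-k}f(-1/z)$.

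Next, since $f$ is modular of weight $k$ for all of $\SL_2(\Z)$, in particular for $S$, and $Sz=-1/z$ with $cz+d=z$ for this matrix, we have $f(-1/z)=z^kf(z)$. Substituting this in gives $z^{-k}f(-1/z)=z^{-k}\cdot z^k f(z)=f(z)$, which is the assertion.

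There is no real obstacle here; the content of the statement is entirely definitional. The proposition is recorded only so that, alongside Theorems~\ref{swappoles} and~\ref{Up2}, one has a clean formula describing the behavior of $f|V_p$ near the cusp $0$ for use in the pole computations of the following section.
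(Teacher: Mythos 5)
Your proof is correct and is exactly the intended argument: the paper states this as a trivial identity with no written proof, and your one-line computation (unwind $V_p$, then apply the weight-$k$ transformation law for $\smat{0}{-1}{1}{0}$) is precisely what is meant.
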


\section{Two-dissections}
As a first step in proving the divisibility results on the $a_k(m,n)$ we begin by reviewing Kolberg's method of two-dissections \cite{Kolberg}.  We do this in order to prove a key congruence that will be used in the proof of Lemma~\ref{ak2m2jn}.

Given a power series $$f=\sum_{n\in\Z} a(n)q^n,$$ we define the two-dissection $f=(f)_0+(f)_1$ by
$$(f)_0=f|U_2|V_2=\sum_{n\in\Z} a(2n)q^{2n}\text{ and } (f)_1=f-(f)_0=\sum_{n\in\Z} a(2n+1)q^{2n+1}.$$
Note that if $f$ is modular of weight $k$ and level $N$, then both $(f)_0$ and $(f)_1$ are modular of weight $k$ and level $4N$.  This two-dissection allows us to separate the coefficients of even and odd powers of $q$ in the expansion of $f$. If we wish to make a finer distinction than studying the coefficients of odd or even powers of $q$, we can (after dividing by $q$ in the case of $(f)_1$) replace $q^2$ by $q$ in the expansion of $(f)_0$ or $(f)_1$ and two-dissect the resulting function.  By two-dissecting a function repeatedly, we can isolate powers of $q$ with exponent in a fixed residue class modulo a power of 2.

To allow us to easily compute two-dissections, we define (following Kolberg) the following functions:
$$\varphi(q)=\prod_{k=1}^\infty(1-q^k),\qquad Q(q)=\prod_{k=1}^\infty(1+q^k)=\frac{\varphi(q^2)}{\varphi(q)},$$
$$R(q)=Q(q^2),S(q)=R(q^2),T(q)=S(q^2).$$
Note that although the expansion of $Q$ contains odd powers of $q$, the expansions of $R$, $S$, and $T$ contain only even powers of $q$.  In addition, under the transformation taking $q^2$ to $q$, we see that $R$ maps to $Q$, $S$ maps to $R$, and $T$ maps to $S$.

Kolberg showed \cite[p. 5]{Kolberg}, that we can write
$$Q^2=R(\cos\alpha+i\sin\alpha)$$
with $\cos(\alpha)=R^2S^3T^{-2}$ and $\sin(\alpha)=-2iqR^2ST^2$.  One then checks that $(Q^2)_0=R\cos\alpha$ and $(Q^2)_1=iR\sin(\alpha)$.  We then compute the two-dissection of $Q^{2k}$ for any integer value of $k$ by setting
$$Q^{2k}=R^k(\cos(k\alpha)+i\sin(k\alpha))$$
and using trigonometric identities to compute $\cos(k\alpha)$ and $\sin(k\alpha)$ in terms of $Q, R, S,$ and $T$.  For instance
$$\sin(2\alpha)=2\sin(\alpha)\cos(\alpha)=-4iqR^4S^4.$$
Other basic identities used by Kolberg are
$$\cos(2\alpha)=R^8S^{-4}\quad\text{ and }\quad \sin(4\alpha)=-8iqR^{12}.$$

We have written a computer program to compute two-dissections of
$q$-series. Since our purpose is to compute congruences modulo
powers of two, we are interested in computing two-dissections with
few terms modulo given powers of two.

For odd dissections of powers of $Q$, the program needs to compute
$\sin(k\alpha)$ in terms of Kolberg's functions.  It does this by
using Chebyshev polynomials  to write $\sin(k\alpha)$ as
$\sin(\alpha)$ times a polynomial in $\cos(\alpha)$ when $k$ is odd,
as  $\sin(2\alpha)$ times a polynomial in $\cos(2\alpha)$ if $k$ is
even, or as $\sin(4\alpha)$ times a polynomial in
$\cos(4\alpha)=1-2\sin^2(2\alpha)$ if $k$ is divisible by 4.

For even dissections of powers of $Q$, the program must compute
$\cos(k\alpha)$ in terms of Kolberg's functions.  To do this for odd
values of $k$, it uses Chebyshev polynomials to write
$\cos(k\alpha)$ as a polynomial in $\cos(\alpha)$.  If $k$ is
divisible by $4$, it computes $k$ as a polynomial in
$\sin^2(2\alpha)$.  Otherwise it uses the identity
$\cos(k\alpha)=1-2\sin^2((k/2)\alpha)$ to reduce the problem to
computing a sine function, as described above.

To illustrate this, we demonstrate the two-dissection of $Q^{16}$.
We note that we can write
$\cos(8\alpha)=1-8\sin^2(2\alpha)+8\sin^4(2\alpha)$, so we have
\begin{align*}
(Q^{16})_0&=R^8\cos(8\alpha)\cr
&=R^8(1-8\sin^2(2\alpha)+8\sin^4(2\alpha))\cr
&=R^8+2^7qR^{16}S^8+2^{11}q^4R^{24}S^{16},
\end{align*}
where we have substituted $-4iqR^4S^4$ for $\sin(2\alpha)$.  This shows that $(Q^{16})_0$ is congruent to $R^8$ modulo $2^7$.

Similarly, we compute the odd two-dissection of $Q^{16}$ using the identity
$$\sin(8\alpha)=2\sin(4\alpha)\cos(4\alpha)=2\sin(4\alpha)(1-2\sin^2(2\alpha)).$$
We then substitute in for $\sin(4\alpha)$ and $\sin(2\alpha)$, obtaining
$$(Q^{16})_1=R^8i\sin(8\alpha)=R^8i(2\sin(4\alpha)(1-2\sin^2(2\alpha)))=16qR^{20}+512q^3R^{28}S^8.$$
This shows that $(Q^{16})_1$ is congruent to $16qR^{20}$ modulo $2^9$, and also that it is congruent to $0$ modulo $2^4$.

To dissect arbitrary functions written in terms of $Q$, $R$, $S$,
$T$, $\varphi(q^2)$, $\varphi(q^4)$, and $\varphi(q^8)$, we see that
it suffices to be able to two-dissect powers of $Q$, since only $Q$
contains terms with odd powers of $q$.

We note that the computer program uses no approximate techniques; in all cases, it manipulates trigonometric identities to obtain proven dissections. Using this program, we prove the following result:

\begin{lemma}\label{dissection}\label{ak2m4} For $k\in\{12,16,18,20,22,26\}$ and  $m$ positive and odd, we have
$$a_k(2m,4)\equiv 0 \pmod {2^{\rho+\gamma}}.$$
\end{lemma}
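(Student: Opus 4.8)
The plan is to reduce the statement to a single $q$-series identity that can be fed to the two-dissection program above. By the Duke--Jenkins duality $a_k(m,n)=-a_{2-k}(n,m)$, the integers $a_k(2m,4)$ with $m$ odd are, up to sign, exactly the coefficients of $q^{j}$ with $j\equiv 2\pmod 4$ in the weakly holomorphic level-$1$ form $f_{2-k,4}$ of weight $2-k\in\{-10,-14,-16,-18,-20,-24\}$; equivalently, after applying $U_2$ they are the odd-index coefficients of the level-$2$ form $g:=f_{2-k,4}\mid U_2$. So it suffices to show that the odd two-dissection $(g)_1$ has every coefficient divisible by $2^{\rho+\gamma}$.

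The first step is to write $g$ explicitly. The form $f_{2-k,4}$ is the unique element of $M^!_{2-k}(1)$ with principal part $q^{-4}$ and vanishing $q^{-3},q^{-2}$ coefficients; applying $U_2$ halves its pole at $\infty$, while Theorem~\ref{swappoles} gives the pole of $g=f_{2-k,4}\mid U_2$ at the cusp $0$ of $\Gamma_0(2)$ (and Theorem~\ref{Up2} that of $f_{2-k,4}\mid U_4$). Since $\theta_{2-k}$ is nonvanishing on the upper half plane apart from the minimal zero at the elliptic point of $\Gamma_0(2)$ dictated by its weight, and $\Phi$ generates the function field of $X_0(2)$ with zero and pole only at the two cusps, every weakly holomorphic form of weight $2-k$ and level $2$ is $\theta_{2-k}$ times a Laurent polynomial in $\Phi$; matching principal parts at $\infty$ and at $0$ then pins $g$ down as $g=\sum_{c}\lambda_c\,\theta_{2-k}\Phi^{c}$ for an explicit finite set of integers $\lambda_c$. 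Using the transformation law $\theta_{2-k}(-1/2z)=\pm 2^{\mu_{2-k}}z^{2-k}\alpha_{2-k}(z)$, the congruence $\alpha_{2-k}\equiv 1\pmod{2^{\xi_{2-k}}}$ of the lemma above, and $\psi(-1/2z)=2^{12}\Phi(z)$, one checks that each $\lambda_c$ is divisible by a suitable power of $2$. Finally, using the eta-quotient identities $\Phi=\Delta(2z)/\Delta(z)=q\,Q^{24}$, $\psi=\Phi^{-1}=q^{-1}Q^{-24}$ and $S_4=q\,Q^{8}\varphi(q^2)^{8}$, this rewrites $g$ modulo $2^{\rho+\gamma}$ as a Laurent polynomial in $q^{\pm1}$, the functions $\varphi(q^{2^{i}})$, and $Q$ --- exactly the kind of input the dissection program accepts.

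Feeding this expression to the program, repeated two-dissection produces $(g)_1$ as a polynomial in $Q,R,S,T$ and the $\varphi(q^{2^{i}})$, from which one reads off that every coefficient is divisible by $2^{\rho+\gamma}$; carrying this out for the six weights separately finishes the proof. I expect the rewriting step to be the main obstacle for $k\in\{12,16,20\}$: for these weights $\theta_{2-k}=S_6/S_4^{\,i}$, and $S_6$ --- unlike $\Delta$, $S_4$, $T_4$, $\Phi$ and $\psi$ --- is not an eta quotient, so it is invisible to the dissection program. Its contribution has to be controlled modulo $2^{\rho+\gamma}$, which requires pushing the congruence $S_6\equiv q\,\varphi(q^2)^{12}\pmod{2^{5}}$ to higher $2$-power precision and combining it with the $2$-divisibility of the $\lambda_c$ noted above. (For $k\in\{18,22,26\}$ one has $\theta_{2-k}=1/S_4^{\,i}$, so $g$ is immediately an eta-quotient expression and this difficulty does not arise.) Once $g$ is in Kolberg-friendly form, the dissection and the bookkeeping of powers of $2$ are routine, if lengthy, applications of Kolberg's trigonometric identities.
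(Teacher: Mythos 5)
Your reduction is the same as the paper's: by duality the claim is that the coefficients of $q^j$ with $j\equiv 2\pmod 4$ in $f_{2-k,4}$ vanish modulo $2^{\rho+\gamma}$, i.e.\ that the odd two-dissection of $f_{2-k,4}\mid U_2$ vanishes to that precision, and the verification is a finite Kolberg-style computation for each of the six weights. Where you diverge is in how you put $f_{2-k,4}\mid U_2$ into a form the dissection program can accept. The paper never passes through the level-$2$ basis $\{\theta_{2-k}\Phi^c\}$ at all: it writes the level-$1$ form $f_{2-k,4}$ as an explicit polynomial expression in $E_4$, $E_6$, $\Delta^{-1}$ and $j$, substitutes the \emph{exact} Kolberg expansions of these four functions in terms of $\varphi(q^{2^i})$ and $Q$, truncates modulo $2^{\rho+\gamma+1}$, and then two-dissects twice. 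Your extra machinery (matching principal parts at both cusps, the transformation laws for $\theta_{2-k}$ and $\psi$, divisibility of the $\lambda_c$) is not needed for this lemma and, more importantly, it introduces the very obstruction you flag.

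That obstruction is a genuine gap, not a routine loose end. For $k\in\{12,16,20\}$ you must dissect $\theta_{2-k}=S_6/S_4^{\,i}$, and $S_6$ has no eta-quotient expression. Your proposed fix --- a congruence $S_6\equiv q\,\varphi(q^2)^{12}\pmod{2^5}$ to be ``pushed to higher precision'' --- is neither established nor quantitatively adequate: the target modulus is $2^{\rho+\gamma}\in\{2^{10},2^{11}\}$ for these weights, and the coefficient $\lambda_c$ multiplying the \emph{leading} term $\theta_{2-k}\Phi^{c}$ in your expansion of $f_{2-k,4}\mid U_2$ has small $2$-valuation (it carries the principal part $q^{-2}$; compare the leading coefficient $-2^4\cdot 12285$ in the analogous expansion in Lemma~\ref{ak2in2}), so the $2$-divisibility of the $\lambda_c$ cannot absorb the missing precision. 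You would need an exact, or at least mod-$2^{\rho+\gamma}$, expression for $S_6$ in terms of $\varphi(q^{2^i})$ and $Q$, and you do not supply one. (Such an expression can in fact be extracted from the exact Kolberg formula for $E_6$ via $S_6=(E_6(z)-E_6(2z))/(-504)$, but that is precisely the point at which your route collapses back into the paper's: once you are substituting the Kolberg expansion of $E_6$, you may as well dissect $f_{2-k,4}$ itself, written in terms of $E_4$, $E_6$, $\Delta$, $j$, as the paper does.) Until the $S_6$ step is made precise, the argument is complete only for $k\in\{18,22,26\}$.
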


\begin{proof}
By duality, this is equivalent to showing that
$$a_{2-k}(4,2m)\equiv 0 \pmod{ 2^{\rho+\gamma}}$$
for each $k$. In other words, we wish to show that $a_{2-k}(4,j)\equiv 0\pmod{2^{\rho+\gamma}}$ for all $j\equiv 2\pmod 4$.

To prove these results, we set $g(q^2)=(f_{2-k,4})_0=\sum_{n=0}^\infty a_{2-k}(4,2n)q^{2n}$.  Then
$$g(q)=\sum_{n=0}^\infty a_{2-k}(4,2n)q^n$$
and the odd two-dissection of $g(q)$ is
$$(g(q))_1=\sum_{n=0}^\infty a_{2-k}(4,2(2n+1))q^{2n+1}=\sum_{n=0}^\infty a_{2-k}(4,4n+2)q^{2n+1},$$
so we see that we want $(g(q))_1\equiv 0\pmod {2^{\rho+\gamma}}$.

To illustrate this, we will look at the $k=16$ case.  We are
obtaining a result modulo $2^{\rho+\gamma}=2^{11}$.  We use the
facts that \cite{Kolberg,Ono}
$$\Delta(z)=q\varphi(q)^{24},\qquad E_4(z)=\varphi(q^2)^8(Q^{-16}+2^8qQ^8),$$
\begin{align*}E_6(z)=\varphi(q^2)^{12}&Q^{-24}-2^5\cdot3\cdot5q\varphi(q^2)^{12}-2^9
\cdot3\cdot11q^2Q^8\varphi(q^2)^4\varphi(q^4)^8\cr&+2^{13}q^3\varphi(q^2)^{-12}\varphi(q^4)^{24},\end{align*}
and $$j(z)=E_4(z)^3/\Delta(z)=q^{-1}Q^{-24}+2^8\cdot3+2^{16}\cdot3qQ^{24}+2^{24}q^2Q^{48},$$
to write $f_{-14,4}(z)=E_4(z)E_6(z)\Delta^{-2}(z)(j(z)^2-1272j(z)+192600)$ modulo $2^{12}$ as
$$\frac{\varphi(q^2)^{8}+40q\varphi(q^2)^8Q^{24}+3584q^2\varphi(q^4)^8Q^{32}+
344q^2\varphi(q^2)^8Q^{48}+768q^3\varphi(q^2)^8Q^{72}}{q^4Q^{40}\varphi(q^2)^{36}}.$$
Note that to get equality, there would be many additional terms; we
have eliminated all terms that are multiples of $2^{12}$.

We now use the program to two-dissect $f_{-14,4}$ by two-dissecting each of the powers of $Q$ appearing in this expansion, obtaining
$$g(q^2)\equiv\left(f_{-14,4}
\right)_0\equiv \frac{2816 R^{40} S^8}{\varphi(q^4)^{28}}+\frac{R^8}{\varphi(q^4)^{28} q^4}+\frac{3288 R^{32}}{\varphi(q^4)^{28} q^2}+\frac{800 R^{16} S^8}{\varphi(q^4)^{28} q^2}\pmod{ 2^{12}}.$$
Replacing $q^2$ by $q$ yields
$$g(q)\equiv\frac{2816 Q^{40} R^8}{\varphi(q^2)^{28}}+\frac{Q^8}{\varphi(q^2)^{28} q^2}+\frac{3288 Q^{32}}{\varphi(q^2)^{28} q}+\frac{800 Q^{16} R^8}{\varphi(q^2)^{28} q}\pmod{ 2^{12}}.$$
Finally, we perform the odd dissection of $g(q)$ to get the desired result:
$$\left(g(q)\right)_1\equiv\frac{2048 q R^{40}}{\varphi(q^2)^{28}}\pmod{2^{12}}.$$
Therefore, $a_{16}(2m,4)$ is divisible by $2^{11}$.

The proofs for the other weights follow similarly, by writing $f_{2-k,4}(z)$ in terms of $E_4$, $E_6$, $\Delta$, and $j$, and two-dissecting the resulting expression twice.  In each case, we find that the coefficients of powers of $q$ with exponent congruent to 2 modulo 4 are all divisible by $2^{\rho+\gamma}$.

\end{proof}

\section{Coefficients $a_k(2^am,2^bn)$ with $a>b$}
\subsection{Special bases for $M_k(2)$}
Writing holomorphic modular forms in terms of special basis elements will often help us to prove congruences on their coefficients (see Lemma~\ref{ak0n}).  Standard dimension formulas \cite[Proposition 6.1]{Stein} yield the following formula for the dimension of $M_k(2)$.
\begin{theorem}
For $k\geq 4$ even,
$$\dim(M_k(2))=1+\left\lfloor\frac{k}4\right\rfloor.$$
\end{theorem}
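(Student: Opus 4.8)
The plan is to derive this as a direct specialization of the general dimension formula for spaces of holomorphic modular forms on a congruence subgroup, applied to $\Gamma_0(2)$; recall that in our notation ``level $2$'' means $\Gamma_0(2)$, so $M_k(2)=M_k(\Gamma_0(2))$.

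First I would recall the standard dimension formula: for an even integer $k\geq 4$ and a congruence subgroup $\Gamma$ whose modular curve has genus $g$, with $\varepsilon_2$ inequivalent elliptic points of order $2$, $\varepsilon_3$ inequivalent elliptic points of order $3$, and $\varepsilon_\infty$ inequivalent cusps, one has
$$\dim M_k(\Gamma)=(k-1)(g-1)+\left\lfloor\frac k4\right\rfloor\varepsilon_2+\left\lfloor\frac k3\right\rfloor\varepsilon_3+\frac k2\,\varepsilon_\infty.$$
This is exactly \cite[Proposition 6.1]{Stein} (and may also be obtained from the valence formula on $X_0(2)$ together with Riemann--Roch).

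Next I would record the relevant invariants of $\Gamma_0(2)$. Using that $\Gamma_0(2)$ has index $3$ in $\SL_2(\Z)$, one checks with coset representatives that $X_0(2)$ has genus $g=0$, that $\Gamma_0(2)$ has exactly one elliptic point of order $2$ and none of order $3$ (so $\varepsilon_2=1$, $\varepsilon_3=0$), and that it has exactly two inequivalent cusps, at $0$ and at $\infty$ (so $\varepsilon_\infty=2$); the last two statements are already recorded in the excerpt. Substituting into the dimension formula gives
$$\dim M_k(2)=-(k-1)+\left\lfloor\frac k4\right\rfloor+0+\frac k2\cdot 2=1+\left\lfloor\frac k4\right\rfloor,$$
which is the asserted identity.

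I do not expect any real obstacle: each ingredient is a direct citation or a standard computation. The only points needing care are to invoke the version of the formula valid for even weight and trivial character (odd weights contribute $0$ since $-I\in\Gamma_0(2)$) and to supply the correct elliptic-point and cusp counts for $\Gamma_0(2)$. As a consistency check, the formula yields $\dim M_4(2)=2$ and $\dim M_6(2)=2$, matching the weight-$4$ and weight-$6$ level-$2$ forms $E_4(z),E_4(2z)$ (equivalently $S_4,T_4$) and $E_6(z),E_6(2z)$ (equivalently $S_6,T_6$) introduced above. Alternatively, one could prove the theorem by first showing that the graded ring $M_*(\Gamma_0(2))$ is a polynomial ring on two algebraically independent generators of weights $2$ and $4$ and then counting the monomials of weight $k$.
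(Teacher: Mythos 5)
Your proposal is correct and follows the same route as the paper, which simply cites the standard dimension formula \cite[Proposition 6.1]{Stein}; you have merely filled in the invariants $g=0$, $\varepsilon_2=1$, $\varepsilon_3=0$, $\varepsilon_\infty=2$ for $\Gamma_0(2)$ and the resulting arithmetic, all of which check out.
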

Hence, in order to find a basis for $M_k(2)$, we need only find $1+\left\lfloor\frac{k}4\right\rfloor$ linearly independent modular forms in $M_k(2)$.  For $k\in\{12,16,18,20,22,26\}$ we do this as follows.
\begin{lemma}\label{level2basis} For $k\in\{12,16,18,20,22,26\}$, the modular forms listed in Table~\ref{basis} form a basis of $M_k(2)$.
\begin{table}[h]
\begin{center}
\begin{tabular}{|c|l|}
\hline $k$&{\rm Basis Elements}\cr \hline $12$&
$E_4(2z)^3,\Delta(z),\Delta(2z),S_4(z)^3$\cr
$16$&$E_4(2z)^4,\Delta_{16},\Delta_{16}(2z),E_4(z)S_4(z)^3,S_4(z)^4$\cr
$18$&$E_4(2z)^3E_6(2z),\Delta_{18},\Delta_{18}(2z),S_4^3E_6,S_4^3S_6$\cr
$20$&$E_4(2z)^5,\Delta_{20},\Delta_{20}(2z),S_4^3E_4^2,S_4^4E_4,S_4^5$\cr
$22$&$E_4(2z)^4E_6(2z),\Delta_{22},\Delta_{22}(2z),S_4^3E_4E_6,S_4^4E_6,S_4^4S_6$\cr
$26$&$E_4(2z)^5E_6(2z),\Delta_{26},\Delta_{26}(2z),S_4^3E_4^2E_6,S_4^4E_4E_6,S_4^5E_6,S_4^5S_6$\cr
\hline
\end{tabular}
\caption{Bases for $M_k(2)$}\label{basis}
\end{center}
\end{table}All of the forms in these bases have integer coefficients and leading coefficient $1$, and the $i$th form in each basis vanishes to order $i-1$ at $\infty$.
\end{lemma}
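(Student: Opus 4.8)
The plan is to establish three facts: (i) every entry of Table~\ref{basis} is a holomorphic modular form of weight $k$ and level $2$; (ii) the number of entries equals $\dim M_k(2)=1+\lfloor k/4\rfloor$; and (iii) the entries are linearly independent. The auxiliary claims about integrality, leading coefficients, and orders of vanishing at $\infty$ will fall out of the same bookkeeping that proves (iii), so I would organize the argument around computing the leading term of the $q$-expansion of each form.

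For (i): each building block is already a holomorphic modular form of level dividing $2$. Indeed $E_4,E_6,\Delta,\Delta_k$ have level $1$, hence level $2$; $E_4(2z),E_6(2z),\Delta_k(2z)$ are their images under $V_2$ and so have level $2$; and $S_4,S_6$ were constructed in Section~2 as holomorphic modular forms of level $2$ of weights $4$ and $6$. Since a product of holomorphic modular forms is holomorphic (on the upper half plane and at every cusp) and adds weights, I would only need to check in each of the six cases that every listed product has total weight $k$; for instance $S_4^3E_4E_6$ has weight $12+4+6=22$ for $k=22$, and similarly for the others. Direct count then gives (ii): the lists have $4,5,5,6,6,7$ entries for $k=12,16,18,20,22,26$, matching $1+\lfloor k/4\rfloor$.

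The core step is to record leading $q$-expansions. From the definitions, $\Delta=q+\cdots$, $S_4=\frac{E_4(z)-E_4(2z)}{240}=q+\cdots$, and $S_6=\frac{E_6(z)-E_6(2z)}{-504}=q+\cdots$, each with leading coefficient $1$; the Eisenstein forms $E_4,E_6,E_4(2z),E_6(2z)$ have constant term $1$; and $\Delta_k=q+\cdots$, $\Delta_k(2z)=q^2+\cdots$. Because the leading coefficient of a product is the product of the leading coefficients, reading down each column of Table~\ref{basis} shows that the $i$th form is $q^{i-1}+O(q^i)$: in every list the first form is a product of Eisenstein series (order $0$, constant term $1$), the second is $\Delta_k$ (order $1$), the third is $\Delta_k(2z)$ (order $2$), and each remaining form is $S_4^{\,j}$ times a form with nonzero constant term (giving order $j$) or $S_4^{\,j}S_6$ (giving order $j+1$), in all cases with leading coefficient $1$ and order exactly as claimed.

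Linear independence is then immediate: a nontrivial complex linear combination of forms with pairwise distinct orders of vanishing at $\infty$ and nonzero leading coefficients has order of vanishing at $\infty$ equal to the smallest order appearing, hence is nonzero. Combined with (ii), the listed forms form a basis of $M_k(2)$. Integrality of the Fourier coefficients follows since $E_4,E_6,\Delta,\Delta_k,S_4,S_6$ all have integer coefficients, and this is preserved under multiplication and under $z\mapsto 2z$. The only point that warrants any care — and it is minor — is holomorphy at the cusp $0$ of $\Gamma_0(2)$; this requires no separate boundary analysis precisely because every factor is already a holomorphic level-$2$ (or level-$1$) form, so the argument reduces to a weight count together with inspection of leading terms, with the dimension formula for $M_k(2)$ as the only external ingredient.
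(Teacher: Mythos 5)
Your proof is correct and takes essentially the same route as the paper: verify each listed form lies in $M_k(2)$ with the stated integrality and leading term, observe that the pairwise distinct orders of vanishing at $\infty$ force linear independence, and conclude by matching the count against $\dim M_k(2)=1+\lfloor k/4\rfloor$. The paper's proof is a terser version of exactly this argument, so no further comparison is needed.
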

\begin{proof} One easily checks that each of the listed forms is in the appropriate $M_k(2)$, has integer coefficients, and vanishes to the indicated order at $\infty$.  For a given $k$, each form has a different order of vanishing, which implies that they are linearly independent, and since there are $1+\left\lfloor\frac k4\right\rfloor$ of them, they form a basis.
\end{proof}

We now prove a result on the divisibility of the coefficients of the holomorphic modular form $f_{k,0}$.
\begin{lemma}\label{ak0n} For $k\in\{12,16,18,20,22,26\}$, and for $n$ odd and positive, $a_k(0,n)\equiv 0\pmod {2^\eta}$.
\end{lemma}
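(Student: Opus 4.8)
The plan is to expand $f_{k,0}$ in terms of the special basis for $M_k(2)$ given in Table~\ref{basis}, and then extract the divisibility of the odd-index coefficients from the structure of that expansion. First I would note that $f_{k,0}$ is a holomorphic modular form of weight $k$ on $\SL_2(\Z)$, hence in particular lies in $M_k(2)$, and its Fourier expansion at $\infty$ begins $1+O(q^{\ell+1})$ where $\ell=k/12$; that is, $f_{k,0}=q^0+0\cdot q+0\cdot q^2+\cdots+0\cdot q^{\ell}+a_k(0,\ell+1)q^{\ell+1}+\cdots$. Writing $f_{k,0}=\sum_i c_i g_i$ where the $g_i$ are the basis elements of Table~\ref{basis} listed in order of vanishing, the first $\ell+1$ coefficients of $f_{k,0}$ being $1,0,0,\dots,0$ forces a triangular system that determines $c_0=1$ and $c_1=\cdots=c_{\ell-1}=0$, and then expresses $f_{k,0}$ as $E_*(2z)^{\,\cdots}$ (the first basis element, a pure level-lowering-of-$q$ Eisenstein-type product in $q^2$) plus an explicit integer linear combination of the remaining basis elements, each of which is either a form in $q^2$ (namely $\Delta_k(2z)$, $E_4(2z)^{\cdots}$, etc.) or one of the $S_4,S_6$-type forms.

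The key observation is that every basis element \emph{except} $\Delta_k(z)$ contributes only even powers of $q$ modulo a high power of $2$: the forms $E_4(2z)^{\cdots}$, $E_6(2z)$, $\Delta_k(2z)$ are literally series in $q^2$; and the forms built from $S_4,S_6$ satisfy $S_4\equiv S_6\equiv 0\pmod{?}$ only in their odd part — more precisely, $S_4(z)=\sum_{d|n,\,d\text{ odd}}\cdots$, and in fact $S_4$ and $S_6$ have \emph{all} their odd-index coefficients even, indeed divisible by a controlled power of $2$, because $S_4(z)=\frac{E_4(z)-E_4(2z)}{240}$ has $n$th coefficient $\sigma_3(n)$ for $n$ odd and $240\mid$ something... so I would instead argue directly: $E_4(z)-E_4(2z)=240\sum \sigma_3^{\mathrm{odd}}(n)q^n$ shows the odd coefficients of $S_4$ are $\sigma_3(n)$ for odd $n$, which need not be even — so the real mechanism must be that the \emph{combinations} $E_4 S_4^3$, $S_4^4$, etc., appearing in the expansion of $f_{k,0}$ come with coefficients $c_i$ that are themselves divisible by large powers of $2$. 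So the actual proof is: compute the $c_i$ explicitly (a finite triangular linear-algebra problem over $\Z$), observe each $c_i$ for $i\geq\ell$ is divisible by $2^\eta$, and observe that the odd part of $f_{k,0}$ is therefore $\equiv\sum_{i\geq\ell, g_i\text{ not in }q^2}c_i (g_i)_1\equiv 0\pmod{2^\eta}$, since the only basis element with a nonzero odd part at all is ruled out or absorbed. One then reads $a_k(0,n)$ for odd $n$ off the odd part.

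The main obstacle I anticipate is bookkeeping, not ideas: one must carry out the triangular back-substitution for each of the six weights and verify the $2$-adic valuations of the resulting coefficients $c_i$ match the value $\eta$ defined for that weight; in particular one has to check that $\Delta_k(z)$ either does not appear (coefficient $0$) in the expansion of $f_{k,0}$ — which it should not, since $f_{k,0}$ is the \emph{Eisenstein}-like element of the canonical basis with $a_k(0,n)$ the analogue of $\sigma_{k-1}$, and $\Delta_k$ is the cusp form — or appears with coefficient divisible by $2^\eta$. Assuming that, the congruences for $S_4,S_6,T_4,T_6$ coefficients (all integral, leading coefficient $1$) and the purely-$q^2$ nature of the $V_2$-shifted forms finish it. I would also double-check the edge case where $\ell+1$ is even versus odd, i.e.\ which coefficient $a_k(0,\ell+1)$ is the "first" one and whether it is an odd index, to be sure the claimed bound $2^\eta$ is attained and not beaten, though for the statement as written only the congruence (not sharpness) is needed.
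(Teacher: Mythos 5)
Your proposal is essentially the paper's own proof: expand $f_{k,0}$ in the Table~\ref{basis} basis for $M_k(2)$ by triangular back-substitution, note that $\Delta_k(z)$ is forced to have coefficient $0$ (since $f_{k,0}$ has vanishing $q^1$-coefficient and the leading basis element is a series in $q^2$), and check explicitly that every remaining basis element with a nonzero odd part (the $S_4,S_6$-products) occurs with coefficient divisible by $2^\eta$. Only note that your intermediate claim that \emph{every} $c_i$ with $i\geq\ell$ is divisible by $2^\eta$ is too strong (e.g.\ for $k=12$ the coefficient of $\Delta(2z)$ is $2^8\cdot 765$, not divisible by $2^{12}$), but this is harmless since, as you then say, only the coefficients of the basis elements with nonzero odd-index coefficients matter.
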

\begin{proof} Note that $f_{k,0}\in M_k(2)$, hence it can be written in terms of the basis described in Lemma~\ref{level2basis}.  For $k=12$, we obtain
$$f_{12,0}(z)=E_4(2z)^3+2^8765\Delta(2z)+2^{12}4095S_4(z)^3.$$
Note that the only form with nonzero odd coefficients is multiplied by $2^{12}$ so that all odd coefficients of $f_{12,0}$ are divisible by $2^{12}$.  The proof for all other weights is similar--when we write $f_{k,0}$ in terms of the basis, all forms with odd coefficients are multiplied by $2^\eta$.\end{proof}
Although it is not used anywhere else in the proof, we include the following result for completeness.
\begin{lemma}\label{ak02in} For $k\in\{12,16,18,20,22,26\}$, for $n$ odd and positive, and for $b>0$, $a_k(0,2^bn)\equiv 0\pmod {2^\omega}$
\end{lemma}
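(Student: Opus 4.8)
The plan is to imitate the proof of Lemma~\ref{ak0n}: realize $f_{k,0}|U_2|V_2$ (the even part of $f_{k,0}$) as an explicit element of a space of level-$4$ modular forms with an integral basis, and read off the divisibility of the odd-indexed coefficients of the rescaled form. More precisely, write $g(z)=(f_{k,0})_0=f_{k,0}|U_2|V_2=\sum_{n\ge 0}a_k(0,2n)q^{2n}$, which by the remarks in Section~5 is a holomorphic modular form of weight $k$ and level $4$, and set $h(z)=\sum_{n\ge0}a_k(0,2n)q^n$, so that $h$ lies in $M_k(4)$ as well (it is $g$ with $q^2\mapsto q$, which at the level of $q$-expansions is the map sending a level-$4$ form supported on even exponents to a level-$4$ form). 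The coefficients we wish to control, $a_k(0,2^bn)$ for $b>0$ and $n$ odd, are exactly the coefficients of $q^m$ in $h$ with $m=2^{b-1}n$, i.e.\ all the coefficients $a_k(0,m)$ of $h$ with $m$ even, together with (when $b=1$) the odd ones. So it actually suffices to prove the single statement that \emph{every} coefficient $a_k(0,2m)$ with $m\ge 1$ is divisible by $2^\omega$; equivalently, that $h(z)-a_k(0,0)$ has all Fourier coefficients $a_k(0,m)$, $m\ge1$, divisible by $2^\omega$ once we restrict to... more carefully, that the coefficients of $h$ at positive even $m$ are divisible by $2^\omega$, which is what feeds back into $b>0$.

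The key computational step is then to expand $h$ (or directly $g$) in a convenient integral basis. For level $4$ one can use the forms $E_4(z)$, $E_4(2z)$, $E_4(4z)$, $\Delta(z)$, $\Delta(2z)$, $\Delta(4z)$, and the higher-weight products $S_4,S_6,\dots$ analogous to Table~\ref{basis}, all of which have integer coefficients and leading coefficient $1$ and are distinguished by their orders of vanishing at $\infty$. Since such a family of the right cardinality is linearly independent and spans $M_k(4)$ (the dimension formula for $\Gamma_0(4)$ gives the count), writing $h$ in terms of it produces integer coordinates. One then checks, for each of the six weights, that in this expansion every basis element whose $q$-expansion contains a positive even power of $q$ with an odd coefficient is multiplied by $2^\omega$ or a higher power of $2$; the values of $\omega$ ($3$ for $k=18,22,26$, $4$ for $k=12,20$, $6$ for $k=16$) are exactly what comes out of these six finite linear-algebra computations, just as $\eta$ came out of Lemma~\ref{ak0n}. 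This is a routine but case-by-case verification, and it is the bulk of the work; it is best carried out with the same computer algebra that produced Lemma~\ref{level2basis} and Lemma~\ref{ak0n}.

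The main obstacle I anticipate is not conceptual but organizational: one must fix the correct integral basis of $M_k(4)$ for each weight and confirm both that it is genuinely a $\Z$-basis (integrality of the change of basis in both directions, or at least that the coordinates of $h$ land in $\Z$) and that the $2$-adic valuation of each relevant coordinate is at least $\omega$ — with no accidental cancellation among basis elements sharing an even exponent that would destroy the bound. A cleaner alternative that avoids constructing a level-$4$ basis would be to use Theorem~\ref{swappoles} (with $p=2$, $f=f_{k,0}$), which expresses $2(2z)^{-k}f_{k,0,2}(-1/2z)$, where $f_{k,0,2}=f_{k,0}|U_2$, in terms of $f_{k,0}$, $f_{k,0,2}$, and $f_{k,0}(4z)$; combined with Lemma~\ref{ak0n} (controlling the odd coefficients of $f_{k,0}$, which become the coefficients of $f_{k,0}|U_2$ after halving indices is \emph{not} quite what happens — $U_2$ keeps the even ones) and Lemma~\ref{tau}-style induction on $b$, one may be able to bootstrap the $b>0$ divisibility from the $b=0$ case and the $b=1$ case. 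Either way, the essential input is the $b=0$ result of Lemma~\ref{ak0n} plus one finite computation, and the novelty of the argument is nil — which is precisely why the lemma is flagged as included only ``for completeness.''
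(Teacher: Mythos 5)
Your plan is not the paper's argument, and as written it has concrete gaps. The paper never leaves level $1$: since $f_{k,0}$ is a holomorphic level-one form with constant term $1$ and vanishing coefficient of $q$, it equals $E_4^rE_6^s-C\Delta_k$ with $4r+6s=k$, $0\le s\le 1$, and $C$ equal to the coefficient of $q$ in $E_4^rE_6^s$. From $E_4\equiv1\pmod{2^4}$ and $E_6\equiv1\pmod{2^3}$ one checks $E_4^rE_6^s\equiv1\pmod{2^\omega}$ (for $k=16$ the extra power of $2$ comes from the binomial factor $r=4$ in $(1+240\sum\sigma_3(n)q^n)^4$), hence also $2^\omega\mid C$, so every coefficient $a_k(0,N)$ with $N\ge1$ is $\equiv -C\,\tau_k(N)\equiv0\pmod{2^\omega}$. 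No dissection, no level-$2$ or level-$4$ basis, and no six-case machine computation is required; this is exactly why the exponents $\omega$ are so much smaller than $\eta$.

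Concerning your proposal itself: (1) your reduction is right --- the lemma is equivalent to the divisibility by $2^\omega$ of \emph{every} positive-index coefficient of $h=f_{k,0}|U_2$ --- but $h$ already lies in $M_k(2)$, so the level-$4$ detour is superfluous, and your later restriction to ``the coefficients of $h$ at positive even $m$'' silently drops the $b=1$ cases, which sit at odd indices of $h$. (2) The verification criterion you propose does not transfer from Lemma~\ref{ak0n}. There, only the odd-indexed coefficients were at issue and essentially one basis element reached them, so it sufficed to see that that element carried a factor $2^\eta$. Here every positive index of $h$ is at issue, so for each basis element $g_i$ other than the one supplying the constant term you need $v_2(c_i)+v_2(\text{each coefficient of }g_i)\ge\omega$; this is a statement about $2$-adic valuations of products, not about which basis elements ``contain an odd coefficient.'' (Your cancellation worry is also inverted: if each term $c_ig_i$ meets the bound, so does the sum; cancellation would only have to be exhibited if some individual term failed it.) (3) The six finite computations, which under your plan constitute the entire content of the proof, are left undone, and your fallback via Theorem~\ref{swappoles} is only a suggestion that you yourself flag as not quite working. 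So the proposal is a plausible but unexecuted and partly misstated plan, where the intended proof is a two-line congruence at level $1$.
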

\begin{proof}Each $f_{k,0}$ is a holomorphic modular form of level 1, with constant term 1, and
with the coefficient of $q$ equal to 0.  We may thus write it as
$$f_{k,0}=E_4(z)^rE_6(z)^s-C\Delta_k(z)$$
where $4r+6s=k$, $0\leq s\leq 1$, and $C$ is the coefficient of $q$
in $E_4(z)^aE_6(z)^b$. Using the facts that $E_4(z)\equiv 1\pmod
{2^4}$ and $E_6(z)\equiv 1\pmod {2^3}$, one easily checks that
$E_4(z)^rE_6(z)^s\equiv 1\pmod{ 2^\omega}$, and the lemma follows.
\end{proof}


Using the above result, we now prove the following two results--the first about coefficients of odd powers of $q$ in specific forms, and the second generalizing this result to odd powers of $q$ in all forms $f_{k,m}$ with $m$ even.
\begin{lemma}
\label{ak2n} For  $k\in\{12,16,18,20,22,26\}$ and $n$ positive and odd, $a_k(2,n)\equiv 0\pmod {2^\nu}.$
\end{lemma}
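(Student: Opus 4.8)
The plan is to prove this by the method used for Lemma~\ref{dissection}: write $f_{k,2}$ in closed form, reduce it modulo a power of $2$ slightly larger than $2^{\nu}$, and run the two-dissection program of Section~5 to isolate its odd-index coefficients.

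First I would produce an explicit formula for $f_{k,2}$. For each of the six weights $\ell=1$, so $f_{k,2}$ is the unique weight-$k$ level-$1$ form with $f_{k,2}=q^{-2}+O(q^{2})$; since $\Delta^{2}$ is holomorphic, nonvanishing on $\mathbb H$, and vanishes to order $2$ at the cusp, the product $\Delta^{2}f_{k,2}$ lies in the finite-dimensional space $M_{k+24}(1)$. It is therefore a $\Z$-linear combination of the monomials $E_4^{a}E_6^{b}\Delta^{c}$ of weight $k+24$, and that combination is determined by the four conditions packaged in $f_{k,2}=q^{-2}+O(q^{2})$. Dividing by $\Delta^{2}$, whose $q$-expansion is integral, writes $f_{k,2}$ as an explicit $\Z$-combination of $E_4$, $E_6$, $j$, and negative powers of $\Delta$, exactly as $f_{-14,4}$ was written out in the proof of Lemma~\ref{dissection}; for $k=12$ one may use the shorter form $f_{12,2}=\Delta\cdot P(j)$ with $P$ a cubic. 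Using $\Delta=q\varphi(q)^{24}$, $E_4=\varphi(q^2)^{8}(Q^{-16}+2^{8}qQ^{8})$, the companion formula for $E_6$ recalled in Section~5, and $\varphi(q)=\varphi(q^2)Q^{-1}$, this turns $f_{k,2}$ into a finite $\Z$-combination of monomials in $q$, $Q$, $\varphi(q^2)$, $\varphi(q^4)$, and $\varphi(q^8)$.

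Next I would reduce that expression modulo $2^{\nu+c}$ for a small safety constant $c$, discarding every monomial whose coefficient is divisible by $2^{\nu+c}$, and feed the resulting short expression into the two-dissection program. Among $q,Q,\varphi(q^2),\varphi(q^4),\varphi(q^8)$ only $q$ and $Q$ contribute odd powers of $q$, and the explicit factors of $q$ are tracked directly, so it suffices to two-dissect each power of $Q$ occurring; the program does this exactly via the Chebyshev-type identities for $\cos$ and $\sin$ of integer multiples of $\alpha$ recalled in Section~5. Collecting the results yields $(f_{k,2})_{1}=\sum_{n\ \text{odd}}a_k(2,n)q^n$ modulo $2^{\nu+c}$ as a finite expression, and one reads off that every $a_k(2,n)$ with $n$ odd is divisible by $2^{\nu}$. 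The integral-basis computations of the preceding lemmas (Lemma~\ref{level2basis} and Lemma~\ref{ak0n}) are what make the closed form of the first step available, with its coefficients exactly and provably known.

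The main obstacle is controlling the truncation. Because $f_{k,2}$ has large Fourier coefficients, one must check that discarding monomials divisible by $2^{\nu+c}$ before dissecting cannot affect the odd part modulo $2^{\nu}$ — equivalently, that the odd dissection of a power of $Q$ is determined modulo $2^{\nu}$ by that power of $Q$ modulo $2^{\nu+c}$. For each of the six weights this is a finite check with the explicit dissection formulas, and it is exactly this estimate that pins down the stated value of $\nu$ (namely $15$ for $k=12,26$ and $16$ for the remaining weights). Alternatively, one can first apply Lemma~\ref{Hecke} with $p=2$ and $m=1$, which gives $2^{k-1}a_k(2,n)=a_k(1,2n)-a_k(1,2)\tau_k(n)$ for odd $n$ and so reduces the claim to a congruence modulo $2^{k-1+\nu}$ for $f_{k,1}$ and $\Delta_k$; but this trades a smaller form for a larger modulus and does not obviously shorten the computation.
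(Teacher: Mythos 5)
Your route is genuinely different from the paper's. The paper does not dissect $f_{k,2}$ at all: it forms $G=f_{k,2}-(f_{k,1}|V_2)-a_k(2,2)\Delta_k(2z)$, which is holomorphic at $\infty$ and whose odd-index coefficients are exactly the $a_k(2,n)$ in question; it then computes the principal part of $G$ at the cusp $0$, observes that the terms needed to cancel that principal part are built from $\psi^{j+1}\Delta_k$ and hence, after transporting back via $\psi(-1/(2z))=2^{12}\Phi(z)$, contribute only multiples of $2^{24}$; so $G\equiv F\pmod{2^{24}}$ with $F\in M_k(2)$ integral. Writing $F$ in the echelon basis of Lemma~\ref{level2basis} reduces the infinite divisibility claim to checking the first $\dim M_k(2)$ coefficients of $G$. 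That finite-check reduction is the whole point of the argument, and it is what your proposal lacks.

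The genuine gap in your plan is the step ``one reads off that every $a_k(2,n)$ is divisible by $2^{\nu}$.'' A Kolberg dissection certifies a congruence only when the resulting closed-form expression is divisible by $2^{\nu}$ \emph{monomial by monomial}, as happens in Lemma~\ref{ak2m4} where the answer collapses to the single term $2048qR^{40}/\varphi(q^2)^{28}$. Here $\nu$ is $15$ or $16$, and a single dissection of the monomials coming from $\Delta E_{k'}P(j)$ produces coefficients with far smaller $2$-valuation: for example $(Q^{-96})_1=iR^{-48}\sin(-48\alpha)$ begins $-96\,qR^{-36}+\cdots$, valuation $5$, and the $\cos$-dissections contribute terms with valuations near $5$--$8$. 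The asserted divisibility by $2^{15}$ can therefore only emerge from cancellation among distinct monomials (indeed exact cancellation is already forced at $q^{-1}$, $q^0$, $q^1$), and there is no a priori guarantee that after collecting like monomials every surviving coefficient is divisible by $2^{\nu}$; if it is not, your finite symbolic output proves nothing and you are left with an infinite coefficientwise check. Separately, the ``safety constant'' discussion is a red herring: dissection is coefficientwise linear, so truncating the input modulo $2^{\nu}$ already cannot affect the output modulo $2^{\nu}$, and $\nu$ is not determined by any truncation estimate --- in the paper it is pinned down by the explicit values of the first $\dim M_k(2)$ coefficients of $G$. Your Hecke-operator alternative is correctly dismissed, since it would require a congruence modulo $2^{k-1+\nu}$.
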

\begin{proof}
We examine the function
$G(z)=f_{k,2}(z)-(f_{k,1}|V_2)(z)-a_k(2,2)\Delta_k(2z)\in M_k^!(2)$.
This function is holomorphic at $\infty$ and in the upper half
plane. Using Proposition~\ref{Vp}, together with the modularity of
$f_{k,2}$ and $\Delta_k$, we easily compute the expansion of $G(z)$
at 0 to be
$$2(2z)^{-k}G(-1/(2z))=2q^{-4}-2^{1-k}q^{-1}+O(1).$$
Hence, we may write
$$2(2z)^{-k}G(-1/(2z))=\sum_{j=1}^4C_j\psi^{j+1}(z)\Delta_k(z)+F'(z)$$
where $F'(z)$ is holomorphic at $\infty$.  From the expansion given
above, we see that $2^{k-1}C_j\in\mathbb Z$ for all $j$. Now
replacing $z$ by $-1/(2z)$ and dividing by $2z^k$, we obtain
$$G(z)=\sum_{j=1}^4\frac{1}{2}C_j2^{12(j+1)}\Phi(z)^{j+1}2^k\Delta_k(2z)+\frac{1}{2}z^{-k}F'(-1/(2z)).$$
We see from this that $\frac{1}{2}z^{-k}F'(-1/(2z))$ is a
holomorphic modular function of weight $k$ and level 2, which we
will denote by $F(z)$. We note that the coefficients
$\frac{1}{2}C_j2^{12j+12+k}$ are all divisible by 2 to the power of
$12j+12+k-(k-1)-1=12j+12$.  Hence, $F(z)$ actually has integer
coefficients, and
$$G(z)\equiv F(z)\pmod {2^{24}}.$$
Since $F(z)$ is a linear combination of the basis elements listed
above, we see that if the first $\dim(M_k(2))$ coefficients of
$G(z)$ are divisible by $2^\nu$, then the same is true for the first
coefficients of $F(z)$, and hence for all coefficients of $F(z)$ and
$G(z)$.  We have computed $G(z)$ for each
$k\in\{12,16,18,20,22,26\}$ and verified that all of the first
$\dim(M_k(2))$ coefficients are divisible by $2^\nu$.  Hence, the
lemma is proved.
\end{proof}

\begin{proposition}\label{odd-coefficients} For $k\in\{12,16,18,20,22,26\}$,
$m,n$ positive and odd, and $a>0$, $$a_k(2^am,n)\equiv 0\pmod {2^\nu}.$$\end{proposition}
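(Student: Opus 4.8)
The plan is to recast the statement and induct. Since $2^a m$ with $m$ odd and $a>0$ ranges over all even positive integers, the proposition is equivalent to: \emph{for every even $M\geq 2$, all coefficients $a_k(M,n)$ with $n$ odd — that is, the odd part $(f_{k,M})_1$ of $f_{k,M}$ — are divisible by $2^\nu$}. I would prove this by induction on the even integer $M$. The base case $M=2$ is Lemma~\ref{ak2n}; in fact that proof shows the stronger fact $f_{k,2}-f_{k,1}|V_2-a_k(2,2)\Delta_k|V_2\equiv 0\pmod{2^\nu}$, so $f_{k,2}$ is congruent modulo $2^\nu$ to a form supported only on even powers of $q$.

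The key device is the weight-$0$ level-$2$ weakly holomorphic form $j(2z)=j|V_2$: it is supported only on even powers of $q$, has a pole of order $2$ at $\infty$, and a simple pole at the cusp $0$ by Proposition~\ref{Vp}. For even $M>2$ set $r=M/2-1\geq 1$ and $h_M:=f_{k,2}\cdot j(2z)^{\,r}\in M_k^!(2)$. Then $h_M$ has a pole of exact order $M$ with leading coefficient $1$ at $\infty$, matching $f_{k,M}$; and since $j(2z)^{\,r}$ is even-supported while $(f_{k,2})_1\equiv 0\pmod{2^\nu}$, every odd-index coefficient of $h_M$ is divisible by $2^\nu$. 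Now run the reduction of Lemma~\ref{ak2n} on $D_M:=h_M-f_{k,M}$, which is a weight-$k$ level-$2$ weakly holomorphic form with pole of order $\leq M-1$ at $\infty$ and a pole of controlled order at $0$. First cancel the pole at $\infty$ by subtracting $\sum_{-1\leq m'\leq M-1}c_{m'}f_{k,m'}$; then pass to the cusp $0$ via $z\mapsto-1/2z$, use $\psi(-1/2z)=2^{12}\Phi(z)$ and $\Delta_k(-1/2z)=(2z)^k\Delta_k(2z)$, and cancel the remaining pole by subtracting a combination of the forms $\Phi^{\,i}\Delta_k(2z)$, which (as in Lemma~\ref{ak2n}) occur with coefficients divisible by $2^{12i}$ and hence by $2^{24}\geq 2^\nu$ for every index $i$ that appears. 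What is left is a holomorphic $F\in M_k(2)$ with integer coefficients.

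The inductive hypothesis now enters: for the even $m'<M$, the coefficients $c_{m'}f_{k,m'}$ contribute $0$ modulo $2^\nu$ in odd degrees; and for odd $m'$ the integer $c_{m'}$ equals the $q^{-m'}$-coefficient of $h_M$ (since $f_{k,M}$ has no negative $q$-power other than $q^{-M}$), so $c_{m'}\equiv 0\pmod{2^\nu}$; the same computation gives $c_{-1}\equiv 0\pmod{2^\nu}$. Consequently $(f_{k,M})_1\equiv -(F)_1\pmod{2^\nu}$, and it remains to verify that the odd-index coefficients of the holomorphic form $F$ are divisible by $2^\nu$. Since $F$ is determined by its first $1+\lfloor k/4\rfloor$ coefficients (Lemma~\ref{level2basis}), and after dissecting, $(F)_1$ by finitely many coefficients in $M_k(4)$, this is a finite check for each of the six weights, to be carried out just as the $M=2$ verification was done in Lemma~\ref{ak2n}.

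I expect the finite verification to be the main obstacle, for two reasons. First, the small odd-index coefficients of $F$ are expressed modulo $2^\nu$ through the small coefficients $a_k(M,n)$ of $f_{k,M}$ itself (so the check must be interleaved with the induction, using that $a_k(m',n)\equiv 0\pmod{2^\nu}$ for even $m'<M$ together with $a_k(0,n)\equiv 0\pmod{2^\eta}$ from Lemma~\ref{ak0n}) and through the $f_{k,0}$-contribution $c_0 f_{k,0}$, whose odd coefficients are only guaranteed divisible by $2^\eta$ with $\eta<\nu$; tracking the $2$-adic size of $c_0=[h_M]_0$ and of $c_{-1}\tau_k(n)$ is the delicate point. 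Second, one must bound the pole of $D_M$ at the cusp $0$ uniformly enough that the powers of $2$ multiplying the $\Phi^{\,i}\Delta_k(2z)$ terms stay at least $2^{24}$, and then confirm by direct computation — for $k\in\{12,16,18,20,22,26\}$ — that the resulting holomorphic forms have the claimed divisibility, exactly in the spirit of the computations already performed for Lemmas~\ref{ak0n} and \ref{ak2n}.
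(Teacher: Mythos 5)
Your reduction of the problem to the statement that $(f_{k,M})_1\equiv 0\pmod{2^\nu}$ for all even $M$, and your construction of the even-supported approximant $h_M=f_{k,2}\,j(2z)^r$, are sound as far as they go, and the bookkeeping for $c_{m'}$ with $m'$ odd, for $c_{-1}$, and for the $\Phi^i\Delta_k(2z)$ coefficients (each divisible by at least $2^{24}$) all checks out. But the step you defer to a ``finite check for each of the six weights'' is exactly where the argument breaks, and it is not a finite check. The holomorphic leftover $F\in M_k(2)$ depends on $M$, and since $F$ vanishes to order $\geq 2$ at $\infty$, its determining coefficients are precisely $[F]_n$ for $2\leq n\leq \lfloor k/4\rfloor$; at the odd indices $n$ in that range one has $[F]_n\equiv -a_k(M,n)-c_0a_k(0,n)\pmod{2^\nu}$, so the congruence $(f_{k,M})_1\equiv -c_0(f_{k,0})_1-(F)_1$ is a tautology at those indices and gives no information (for $k=12$ it reads $a_{12}(M,3)\equiv a_{12}(M,3)$). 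Worse, even granting the induction everywhere else, the term $c_0(f_{k,0})_1$ is only divisible by $2^\eta$ with $\eta<\nu$, and $c_0=[h_M]_0$ is a sum involving the uncontrolled even-index coefficients $a_k(2,2j)$, so there is no reason for $v_2(c_0)\geq\nu-\eta$. You have correctly located the obstacle but not removed it; as written the method proves at best a congruence modulo $2^\eta$ conditional on controlling finitely many coefficients of $f_{k,M}$ for every $M$, which is circular.

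The paper's proof avoids both problems by dualizing: it shows $f_{2-k,n}|U_2\equiv a_{2-k}(n,0)\pmod{2^\nu}$ in the \emph{negative} weight $2-k$, where there are no nonzero holomorphic forms, so $f_{2-k,n}|U_2$ is written exactly as $\sum_j C_j'\Phi^j\alpha_{2-k}$ with no holomorphic leftover; modulo $2^{16}$ only the $j=0,1$ terms survive, and their coefficients are pinned down by $a_{2-k}(n,0)=-a_k(0,n)$ and $a_{2-k}(n,2)=-a_k(2,n)$, i.e.\ by Lemmas~\ref{ak0n} and~\ref{ak2n}. The analogue of your troublesome $c_0f_{k,0}$ term is $a_{2-k}(n,0)\alpha_{2-k}(z)$, and because $\alpha_{2-k}\equiv 1\pmod{2^{\xi_{2-k}}}$ its nonconstant coefficients pick up the extra factor $2^{\xi_{2-k}}$, giving $2^{\eta+\xi_{2-k}}=2^\nu$ on the nose. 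Your weight-$k$ form $f_{k,0}$ has no such ``$\equiv 1$'' structure to supply that boost, which is why the approach stalls at $2^\eta$. If you want to salvage a direct weight-$k$ argument you would need an a priori handle on the finitely many initial odd coefficients of $f_{k,M}$ uniformly in $M$ (or on $v_2(c_0)$), and the natural way to get that is precisely the duality the paper uses.
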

\begin{proof} We will show that $f_{2-k,n}|U_2\equiv a_{2-k}(n, 0)\pmod {2^\nu}$.  This
immediately implies that for all $a>0$, the coefficient $a_{2-k}(n,2^am)\equiv 0\pmod{2^\nu}$, and we are done by duality.

Now $f_{2-k,n}|U_2$ is holomorphic at $\infty$, and using
Theorem~\ref{swappoles}, we see that it has a pole of order $4n$ at
0.  In fact, choosing values of $C_j$ to cancel out all negative
powers of $q$, we see that
$$2(2z)^{-(2-k)}(f_{2-k,n}|U_2)(-1/(2z))=2^{2-k}q^{-4n}+\cdots=\sum_{j=0}^{4n}C_j\psi(z)^j\theta_{2-k}(z),$$
where equality follows from the fact that there are no nonzero holomorphic modular forms of negative weight.
Note that $A_j=2^{k-2}C_j$ is an integer.

We then have that
$$f_{2-k,n}|U_2=\pm\sum_{j=0}^{4n}2^{2-k}A_j2^{12j-1}2^{\mu_{2-k}}\alpha_{2-k}(z)\Phi(z)^j,$$
with the sign depending on $k$.

Reducing this modulo $2^{16}$, we find that all but the first two terms vanish.  We then find that for some integer $C$,
$$f_{2-k,n}|U_2\equiv a_{2-k}(n,0)\alpha_{2-k}(z)+C\alpha_{2-k}(z)\Phi(z)\pmod{2^{16}}.$$
Since $a_{2-k}(n,0)\equiv0\pmod {2^\eta}$ (by Lemma~\ref{ak0n}), and $\alpha_{2-k}\equiv 1\pmod {2^{\xi_{2-k}}}$, we see that $C\equiv a_{2-k}(n,2)\pmod{2^{\eta+\xi_{2-k}}}$.  Hence, as long as $\eta+\xi_{2-k}\geq\nu$, we are done, since $a_{2-k}(n,2)\equiv 0\pmod {2^{\nu}}$ by Lemma~\ref{ak2n} and duality. One checks easily for each value of $k$ that $\eta+\xi_{2-k}=\nu$.\end{proof}

Next we prove a congruence on coefficients of $q^2$ in certain forms, which will allow us to complete the proof of Proposition~\ref{lower}, which gives the desired divisibility for all $a_k(2^im,2^jn)$ with $i>j$.

\begin{lemma}\label{ak2in2} Let $k\in\{12,16,18,20,22,26\}$. For $a>1$ and $m$ positive and odd, we have $a_k(2^am,2)\equiv 0\pmod{2^\chi}$.\end{lemma}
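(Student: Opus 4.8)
The plan is to use duality to convert the statement into a claim about $a_{2-k}(2,2^am)$ for $m$ odd and $a>1$, and then to study the weakly holomorphic form $f_{2-k,2}|U_2$ at the cusp $0$, exactly as in the proof of Proposition~\ref{odd-coefficients}. By duality, $a_k(2^am,2)=-a_{2-k}(2,2^am)$, so it suffices to show that the coefficient of $q^{2^am}$ in $f_{2-k,2}|U_2$ is divisible by $2^\chi$ for every $a>1$; equivalently, writing $h=f_{2-k,2}|U_2$, that every Fourier coefficient of $h$ indexed by $n\equiv 0\pmod 4$ is divisible by $2^\chi$. Since $f_{2-k,2}$ has a pole of order $2$ at $\infty$, the form $h=f_{2-k,2}|U_2$ is holomorphic at $\infty$, and by Theorem~\ref{swappoles} it has a pole of order $8$ at $0$. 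So $2(2z)^{-(2-k)}h(-1/(2z))$ has a pole of order $8$ at $\infty$, and, using that there are no nonzero holomorphic forms of negative weight, we may write it as $\sum_{j=0}^{8}C_j\psi(z)^j\theta_{2-k}(z)$ with $A_j:=2^{k-2}C_j\in\Z$, the $A_j$ being determined by cancelling the principal part. Transforming back via $z\mapsto -1/(2z)$, dividing by $2z^k$, and using $\psi(-1/(2z))=2^{12}\Phi(z)$ together with $\theta_{2-k}(-1/(2z))=\pm 2^{\mu_{2-k}}z^{2-k}\alpha_{2-k}(z)$ gives an expression
$$h=f_{2-k,2}|U_2=\pm\sum_{j=0}^{8}2^{2-k}A_j\,2^{12j-1}\,2^{\mu_{2-k}}\,\alpha_{2-k}(z)\,\Phi(z)^j.$$
Reducing modulo a suitable power of $2$, the terms with $j$ large vanish (since $\Phi$ has a simple zero at $\infty$, the power $\Phi^j$ only affects coefficients of $q^n$ with $n\geq j$, and the coefficient $2^{12j+\mu_{2-k}+1-k}$ grows with $j$), and one is left with a short combination $\sum_{j=0}^{J}c_j\alpha_{2-k}(z)\Phi(z)^j$ modulo $2^{\text{large}}$, where $c_0=a_{2-k}(2,0)$ by comparing $q^0$-coefficients.

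The next step is to identify the $c_j$ with already-controlled quantities. Comparing coefficients of $q$, and using $\alpha_{2-k}\equiv 1\pmod{2^{\xi_{2-k}}}$ (so $\alpha_{2-k}\Phi^0$ contributes $a_{2-k}(2,0)$ in degree $0$ and a multiple of $2^{\xi_{2-k}}$ in degree $1$, while $\alpha_{2-k}\Phi$ has leading term in degree $1$), one reads off $c_1\equiv a_{2-k}(2,2)\pmod{2^{\xi_{2-k}+\eta}}$, since $a_{2-k}(2,0)\equiv 0\pmod{2^\eta}$ by Lemma~\ref{ak0n}. Continuing to match coefficients of $q^2,\dots,q^J$ expresses each $c_j$ in terms of the $a_{2-k}(2,2\ell)$ for $\ell\le j$ and the coefficients of $\alpha_{2-k}$ and $\Phi$; the key inputs are $a_{2-k}(2,2)\equiv 0\pmod{2^\nu}$ (Lemma~\ref{ak2n} with duality) and $a_{2-k}(2,4)=-a_k(4,2)$, which is handled by Lemma~\ref{dissection} (it gives $a_k(2m,4)\equiv 0\pmod{2^{\rho+\gamma}}$, and by the symmetry built into that lemma's proof, the relevant coefficient $a_{2-k}(4,2m)$—here $a_{2-k}(2,4)$—is divisible by $2^{\rho+\gamma}$ as well). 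Once all the $c_j$ that survive the reduction are known to be divisible by $2^\chi$, the whole expression for $h$ is $\equiv 0\pmod{2^\chi}$ in every degree $\ge 1$ (the degree-$0$ coefficient is $a_{2-k}(2,0)\equiv 0\pmod{2^\eta}$ and $\eta\ge\chi$ holds for each $k$), and in particular the coefficients in degrees divisible by $4$ are divisible by $2^\chi$, which is what we want. A bookkeeping check that $\eta+\xi_{2-k}$, $\rho+\gamma$, and $\nu$ all meet or exceed $\chi$, together with the exact value of $\mu_{2-k}$ for each weight, closes the argument; alternatively, as in Lemma~\ref{ak2n}, one can write $h$ in terms of the explicit basis of $M_k(2)$ from Lemma~\ref{level2basis} after clearing the pole at $0$ against powers of $\psi$, and verify the first $\dim M_k(2)$ coefficients by direct computation.

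The main obstacle is controlling the coefficient $c_2$ (equivalently $a_{2-k}(2,4)$, equivalently $a_k(4,2)$): unlike $c_1$, which is covered directly by Lemma~\ref{ak2n}, the degree-$2$ term mixes contributions from $\alpha_{2-k}\Phi^2$, from the $q^2$-coefficient of $\Phi$ inside $\alpha_{2-k}\Phi$, and from the $q^2$-coefficient of $\alpha_{2-k}$ itself, so the divisibility of $c_2$ by $2^\chi$ relies essentially on the two-dissection computation of Lemma~\ref{dissection} rather than on a clean Hecke identity. The rest of the argument is the same pole-swapping template already used twice in the paper, so the only genuinely new work is assembling these inputs and checking, weight by weight, that the exponent inequalities $\rho+\gamma\ge\chi$, $\eta+\xi_{2-k}\ge\chi$, $\nu\ge\chi$, $\eta\ge\chi$ hold and that enough terms of the $\Phi^j$-expansion vanish modulo the working precision; this is routine given the tables of constants.
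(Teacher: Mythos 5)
Your template (duality, pole-swapping at $0$, expansion in powers of $\Phi$ times $\alpha_{2-k}$) is the right one, but you apply $U_2$ once where the paper applies $U_4=U_{2^2}$, and this is not cosmetic: it breaks the argument. The coefficient of $q^n$ in $h=f_{2-k,2}|U_2$ is $a_{2-k}(2,2n)$, so the target coefficients $a_{2-k}(2,2^am)$ with $a>1$ sit at the \emph{even} exponents $n=2^{a-1}m$ of $h$ (not at $n\equiv 0\pmod 4$, which misses the $a=2$ case), while the exponent $n=1$ of $h$ carries $a_{2-k}(2,2)=-a_k(2,2)$. That is an ``$a=b$'' coefficient, which the paper explicitly warns is often odd; consequently your plan of showing that every $c_j$ with $j\geq 1$ is divisible by $2^\chi$ (hence every positive-degree coefficient of $h$ is) cannot succeed. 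The step where this surfaces is your citation of Lemma~\ref{ak2n}: that lemma plus duality controls $a_{2-k}(n,2)$ for \emph{odd} $n$, i.e.\ the coefficient of $q^2$ in $f_{2-k,n}$; it says nothing about $a_{2-k}(2,2)$. Likewise, your appeal to a ``symmetry'' in Lemma~\ref{dissection} to control $a_{2-k}(2,4)=-a_k(4,2)$ is unfounded: that lemma concerns $a_k(2m,4)=-a_{2-k}(4,2m)$, a coefficient of $f_{2-k,4}$, not of $f_{2-k,2}$, and $a_k(4,2)$ is an $a>b\geq 1$ coefficient whose divisibility (Proposition~\ref{lower}) is itself deduced from the lemma you are trying to prove, so using it here would be circular.

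The repair is the paper's: pass to $f_{2-k,2}|U_4$, whose $q^n$-coefficient is $a_{2-k}(2,4n)$, so that \emph{all} positive-degree coefficients are target coefficients and the goal becomes showing the form is congruent to its constant term modulo $2^\chi$. Theorem~\ref{Up2} (not Theorem~\ref{swappoles}) shows $f_{2-k,2}|U_4$ is a weight $2-k$, level $2$ form, holomorphic at $\infty$ and on the upper half plane, with a pole of order $16$ at $0$, hence equal to $\sum_i C_i\Phi^i\alpha_{2-k}$. One then computes the $C_i$ by matching Fourier coefficients and verifies, weight by weight, that every $C_i$ with $i\geq 1$ is divisible by $2^\chi$ (for $k=12$ the smallest valuation among these is $2^{13}$), while $C_0\alpha_{2-k}\equiv C_0\pmod{2^{\xi_{2-k}}}$ is constant modulo $2^\chi$. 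This finite computation cannot be replaced by bootstrapping from Lemmas~\ref{ak0n}, \ref{ak2n}, and \ref{dissection}, since none of those reaches the coefficients $a_{2-k}(2,2j)$ that determine your $c_j$.
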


\begin{proof}
By duality, it is enough to show that $a_{2-k}(2,2^am)\equiv 0\pmod {2^\chi}$.  We will do this by showing that $f_{2-k,2}|U_4\equiv a_{2-k}(2,0)\pmod{2^\chi}$.  This will imply that all coefficients of $q^n$ in $f_{2-k,2}$ with $4|n$ and $n>0$ will be divisible by $2^\chi$.

We begin by noting that by Theorem~\ref{Up2}, $f_{2-k,2}|U_4$ is a modular form of weight $2-k$ and level 2, holomorphic at infinity and on the upper half plane, with a pole of order 16 at 0.  This implies that it can be written as a linear combination
$$f_{2-k,2}|U_4=\sum_{i=0}^{16-\delta}C_i\Phi^i\alpha_{2-k},$$
where $\delta$ is the order of the pole at 0 of $\alpha_{2-k}$.  We could use Theorem~\ref{Up2} to directly compute the coefficients $C_i$, but instead use the fact that the $\Phi^i\alpha_{2-k}$ are linearly independent to compute the expansion directly (by cancelling out the constant term, then the $q$ term, etc.).  The expansion for $k=12$ is then computed to be
\begin{align*}
f_{-10,2}|U_4&=
-2^{4}12285\alpha_{-10}-2^{13}560584703\Phi^1\alpha_{-10}-2^{32}81073083\Phi^2\alpha_{-10}\cr&\ \ \ \
-2^{41}1530945507\Phi^3\alpha_{-10}-2^{52}2152441109\Phi^4\alpha_{-10}-2^{61}5180299059\Phi^5\alpha_{-10}\cr&\ \ \ \
-2^{72}1612138839\Phi^6\alpha_{-10}-2^{81}1156390619\Phi^7\alpha_{-10}-2^{93}63543825\Phi^8\alpha_{-10}\cr&\ \ \ \
-2^{102}17635779\Phi^9\alpha_{-10}-2^{117}48367\Phi^{10}\alpha_{-10}-2^{126}5199\Phi^{11}\alpha_{-10}\cr&\ \ \ \
-2^{138}39\Phi^{12}\alpha_{-10}-2^{147}\Phi^{13}\alpha_{-10}.\cr
\end{align*}
All terms except the first vanish modulo $2^7$, and, since $\alpha_{-10}\equiv 1\pmod {2^3}$, we see that
$$f_{-10,2}|U_4\equiv -2^412285\pmod {2^7}.$$

The proof for the other weights proceeds similarly, computing the expansion of $f_{2-k,2}|U_4$ and noting that all terms except the first vanish modulo $2^\chi$ and that the first term is a constant modulo $2^\chi$.
\end{proof}

\begin{proposition}\label{lower} Let $k\in\{12,16,18,20,22,26\}$. For $a> b\geq0$, and for $m,n$ positive and odd, we have
$$a_k(2^am,2^bn)\equiv 0\pmod {2^\chi}.$$\end{proposition}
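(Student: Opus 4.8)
The plan is to reduce the general assertion to the special cases already established and to the Hecke recursion of Lemma~\ref{Hecke2} with $p=2$. First, the case $b=0$ is exactly Proposition~\ref{odd-coefficients} (note $\nu\geq\chi$ in every weight, so $2^\nu\mid a_k(2^am,n)$ gives $2^\chi\mid a_k(2^am,n)$), so we may assume $b\geq 1$, hence $a\geq 2$. The case $b=1$ is Lemma~\ref{ak2in2}: it gives $a_k(2^am,2)\equiv 0\pmod{2^\chi}$ for $a>1$ and $m$ odd. So it remains to handle $a>b\geq 2$ by an induction on $b$, using the cases $b$ and $b-1$ to deduce the case $b+1$, with the base cases $b=0$ and $b=1$ in hand.

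For the inductive step I would apply Lemma~\ref{Hecke2} with $p=2$. Taking $M=2^{a}m$ (so $M$ is even, $2\mid M$, and since $a\geq 2$ also $4\mid M$, giving $\delta_{M,2}=1$) and $N=2^{b-1}n$ with $b\geq 2$ (so $N$ is even but, since we will want to keep track of divisibility, we note $4\mid N$ exactly when $b\geq 3$), the identity reads, after writing everything in terms of $a_k(2^am,2^cn)$:
\begin{align*}
2^{2k-2}a_k(2^{a-2}m,2^{b-1}n)&+(1+\delta_{2^am,2})\,2^{k-1}a_k(2^am,2^{b-1}n)+a_k(2^{a-4}m,2^{b-1}n)\cr
&\qquad+a_k(2^am,4)\,\tau_k(2^{b-1}n)\cr
&=2^{2k-2}a_k(2^am,2^{b-3}n)+(1+\delta_{2^{b-1}n,2})\,2^{k-1}a_k(2^am,2^{b-1}n)+a_k(2^am,2^{b+1}n).
\end{align*}
Solving for $a_k(2^am,2^{b+1}n)$ and examining each remaining term: the terms $2^{2k-2}a_k(2^{a-2}m,2^{b-1}n)$, $2^{2k-2}a_k(2^am,2^{b-3}n)$, $2^{k-1}a_k(2^am,2^{b-1}n)$ are each divisible by $2^{k-1}\geq 2^\chi$ (note $k-1\geq 11\geq\chi$ for all our weights); the term $a_k(2^{a-4}m,2^{b-1}n)$ is covered by the inductive hypothesis (its first subscript has $2$-adic exponent $a-4$ and $a-4>b-1$ need not hold in general — here is a subtlety, see below); and the term $a_k(2^am,4)\tau_k(2^{b-1}n)$ is divisible by $2^{\rho+\gamma}\cdot 2^{\gamma(b-1)}$ by Lemma~\ref{ak2m4} together with Lemma~\ref{tau}, which for $b\geq 2$ is at least $2^{\rho+\gamma}\geq 2^\chi$ in every weight. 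Collecting these, $a_k(2^am,2^{b+1}n)\equiv 0\pmod{2^\chi}$.

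The main obstacle is the bookkeeping around the term $a_k(2^{a-4}m,2^{b-1}n)$ and, more seriously, the possibility that when we decrease $a$ in the recursion the inequality $a>b$ is not preserved (e.g.\ if $a=b+1$ and we pass to exponent $a-4<b-1$, the diagonal $a=b$ case — for which the theorem asserts nothing — could intervene). The fix is to organize the induction carefully: rather than inducting on $b$ alone I would induct on the pair, or better, fix $a$ and prove the statement for all $b<a$ simultaneously by strong induction on $b$, invoking Lemma~\ref{ak2in2} and Lemma~\ref{ak2m4} to control the genuinely ``off-diagonal near the boundary'' terms, and invoking Proposition~\ref{odd-coefficients} whenever a subscript's $2$-adic valuation drops to $0$. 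Alternatively — and this is likely the cleanest route, mirroring the structure of the analogous argument in \cite{Doud-Jenkins} — one uses Lemma~\ref{Hecke} (single application of $T_2$) rather than Lemma~\ref{Hecke2}: with $m':=2^am$ and $n':=2^{b}n$, Lemma~\ref{Hecke} gives
$$a_k(2^{a-1}m,2^bn)+a_k(2^am,2)\tau_k(2^bn)+2^{k-1}a_k(2^{a+1}m,2^bn)=a_k(2^am,2^{b+1}n)+2^{k-1}a_k(2^am,2^{b-1}n),$$
and here the term $a_k(2^am,2)\tau_k(2^bn)$ is divisible by $2^\chi\cdot 2^{\gamma b}$ by Lemma~\ref{ak2in2} and Lemma~\ref{tau}, the terms $2^{k-1}a_k(2^{a+1}m,2^bn)$ and $2^{k-1}a_k(2^am,2^{b-1}n)$ are divisible by $2^{k-1}\geq 2^\chi$, and $a_k(2^{a-1}m,2^bn)$ is handled by induction on $a$ (decreasing $a$ keeps $a>b$ as long as $a-1\geq b$, and the boundary $a-1=b$ is reached only from $a=b+1$, at which point $a_k(2^{a-1}m,2^bn)=a_k(2^bm,2^bn)$ is on the diagonal — so instead I induct downward from large $a$, or I start the induction at the base case $b=0$ of Proposition~\ref{odd-coefficients} and $b=1$ of Lemma~\ref{ak2in2} and push upward in $b$ using the displayed identity solved for $a_k(2^am,2^{b+1}n)$, where now every term on the right other than $a_k(2^am,2^{b+1}n)$ either has an explicit factor $2^{k-1}$ or is $a_k(2^{a-1}m,2^bn)$ with $a-1\geq b$, covered by induction, or is divisible by $2^\chi$ directly). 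Verifying in each of the six weights that the relevant constants satisfy $k-1\geq\chi$, $\rho+\gamma\geq\chi$, and $\nu\geq\chi$ is a finite check and completes the proof.
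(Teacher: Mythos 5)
Your final route --- induction on $b$ with base case Proposition~\ref{odd-coefficients}, applying Lemma~\ref{Hecke} to $2^am$ and $2^bn$ and using Lemma~\ref{ak2in2} together with $k-1\geq\chi$ to dispose of the remaining terms --- is exactly the paper's proof and is correct; note that for $a>b+1$ one automatically has $a-1>b$, so the strict inequality required by the induction hypothesis is preserved and the diagonal case is never reached. The preliminary detour through Lemma~\ref{Hecke2} is unnecessary and misquotes that lemma (you swapped the roles of $a_k(mp^2,n)$ and $a_k(m/p^2,n)$, writing $2^{a-2}$ and $2^{a-4}$ where $2^{a+2}$ and $2^{a-2}$ should appear), and the claim that ``the case $b=1$ is Lemma~\ref{ak2in2}'' is imprecise since that lemma treats only $n=1$, but since you discard this branch in favor of the Lemma~\ref{Hecke} argument neither issue affects the result.
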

\begin{proof} We proceed by induction on $b$.  The case $b=0$ is just Proposition~\ref{odd-coefficients}.

We assume (by way of induction) that the theorem is true for all $b\leq B$.  Assume that $a>B+1$.  Then by Lemma~\ref{Hecke}, with  $m$ and $n$ replaced respectively by $2^am$ and $2^Bn$, we obtain
\begin{align*}a_k(2^{a-1}m,2^Bn)+a_k(2^am,2)&\tau_k(2^Bn)+2^{k-1}a_k(2^{a+1}m,2^Bn)\cr&=a_k(2^am,2^{B+1}n)+2^{k-1}a_k(2^am,2^{B-1}n).\end{align*}
Since $k-1\geq \chi$, this reduces to
 $$a_k(2^{a-1}m,2^Bn)+a_k(2^am,2)\tau_k(2^Bn)=a_k(2^am,2^{B+1}n) \pmod{2^\chi}.$$
However, by our induction hypothesis, $a_k(2^{a-1}m,2^Bn)$ vanishes modulo $2^\chi$, and by Lemma~\ref{ak2in2},  $a_k(2^am,2)$ vanishes modulo $2^\chi$.  Hence, the theorem is proven.\end{proof}

\section{Coefficients $a_k(2^am,2^bn)$ with $a<b$}
We now prove the theorem for the coefficients $a_k(2^am,2^bn)$ for which $a<b$.  We will prove this by induction on the difference $b-a$.  To begin, we deal with the cases where $b-a<3$ (each of which is proved by induction on $a$).  In order to prove these cases, we first need the following divisibility result for coefficients of $q^2$.

\begin{lemma}\label{akm2}
Let $k\in\{12,16,18,20,22,26\}$.  Then for odd $m$, $a_k(m,2)$ is divisible by $2^\gamma$.
\end{lemma}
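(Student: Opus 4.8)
The plan is to exploit the duality $a_k(m,2)=-a_{2-k}(2,m)$ and study the coefficients of odd powers of $q$ in the weakly holomorphic form $f_{2-k,2}$. By Lemma~\ref{Hecke} with weight $2-k$, $m\to 2$, $n\to m$, and $p=2$,
\begin{align*}
a_{2-k}(1,m)+a_{2-k}(2,2)\tau_{2-k}(m)+2^{1-k}a_{2-k}(4,m)&=a_{2-k}(2,2m)+2^{1-k}a_{2-k}(2,m/2).
\end{align*}
Here $m$ is odd, so $a_{2-k}(2,m/2)=0$; also $a_{2-k}(1,m)=a_{2-k}(-1,\,\cdot)$-type terms vanish since $1$ is not a legal index for the $f_{2-k,\bullet}$ basis in negative weight (indeed $a_{2-k}(1,m)=0$ because $1>\ell$ fails for weight $2-k<0$). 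So modulo the negative power of $2$ this identity is not immediately useful in that raw form; instead the cleaner route is to work with the holomorphic side. The key observation is that $a_{2-k}(2,m)=-a_k(m,2)$ and $a_k(m,2)$ is the $2$nd Fourier coefficient of $f_{k,m}$, a \emph{weakly holomorphic} form in positive weight $k$; but applying $U_2$ is the natural move to isolate coefficients indexed by even integers, so I would instead study $f_{2-k,2}$ directly.

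First I would observe that $a_{2-k}(2,m)$ for $m$ odd is the coefficient of $q^m$ in $f_{2-k,2}$, equivalently a coefficient of $(f_{2-k,2})_1$, the odd part of its two-dissection. Rather than dissect, the slicker approach mirrors the proof of Lemma~\ref{ak2n}: I would form the auxiliary function
$$H(z)=f_{2-k,2}(z)-a_{2-k}(2,0)\,\alpha_{2-k}(z)\cdot(\text{correction})$$
No — more precisely, following the pattern already established, I would relate $f_{k,m}$ (weight $k>0$) to level-$2$ data. The cleanest path: apply Lemma~\ref{Hecke} in weight $k$ with $n=1$. Since $\tau_k(1)=1$ and $a_k(m,1)=0$ (as $1<\ell+1$ for these weights), and $a_k(m/2,1)=0$ for $m$ odd, we get
$$a_k(m,2)\tau_k(1)+2^{k-1}a_k(2m,1)=a_k(m,2)+2^{k-1}a_k(m,1/2),$$
which is a tautology. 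So Hecke at $n=1$ is empty; I must use $n=2$ or work with $U_2$.

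The approach I would actually carry out: use duality to reduce to showing $a_{2-k}(2,m)\equiv 0\pmod{2^\gamma}$ for odd $m$, then note this is a statement about \emph{every} odd-index coefficient of the single form $f_{2-k,2}$. Write $(f_{2-k,2})_1$, divide by $q$ and replace $q^2\mapsto q$; the result is a weakly holomorphic form of weight $2-k$ and level $4$. I would show, using Theorem~\ref{swappoles} (to control its pole at $0$) and the explicit generators $\Phi$, $\alpha_{2-k}$, together with the congruences $\alpha_{2-k}\equiv 1\pmod{2^{\xi_{2-k}}}$ from Lemma~1.4, that this form is congruent modulo $2^\gamma$ to a combination of holomorphic forms whose relevant coefficients have already been bounded — in particular reducing to $a_{2-k}(2,0)=-a_k(0,2)$, which is controlled by Lemma~\ref{ak02in} ($a_k(0,2^bn)\equiv 0\pmod{2^\omega}$ with $b=1$), and one checks $\min(\omega,\xi_{2-k},k-1)\geq\gamma$ for each weight. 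Concretely, since the analogous but stronger statement $a_k(2,n)\equiv 0\pmod{2^\nu}$ for odd $n$ is already Lemma~\ref{ak2n}, by duality $a_{2-k}(n,2)\equiv 0\pmod{2^\nu}$; the content of the present lemma is the \emph{other} slot, $a_{2-k}(2,m)$, and I would obtain it by the same $\psi$/$\theta_{2-k}$ expansion technique used in Proposition~\ref{odd-coefficients}, namely expanding $f_{2-k,2}$ at the cusp $0$ in terms of $\psi^j\theta_{2-k}$, transporting back via $\theta_{2-k}(-1/(2z))=\pm 2^{\mu_{2-k}}z^{2-k}\alpha_{2-k}$, reducing modulo a suitable power of $2$ so that only the first one or two terms survive, and extracting the claim from the surviving constant term together with Lemma~\ref{ak0n}/Lemma~\ref{ak02in} and the value of $\xi_{2-k}$.

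The main obstacle I anticipate is bookkeeping the $2$-adic valuations along the transformation $z\mapsto -1/(2z)$: each $\psi^j\theta_{2-k}$ contributes a factor $2^{12j+\mu_{2-k}}$ against a denominator from clearing the principal part, and one must verify that after this cancellation the surviving low-order terms are genuinely congruent to a constant modulo $2^\gamma$ (not just modulo some larger power that then gets eroded). Because $\gamma$ is small ($3$, $4$, or $5$), this should be comfortable — the estimates are far less tight than in Lemma~\ref{ak2n} where the modulus is $2^\nu$ — but it does require checking, weight by weight, that $12\cdot 1+\mu_{2-k}-(k-2)-1\geq\gamma$ so that the $j\geq 1$ terms all vanish mod $2^\gamma$, and that the $j=0$ term reduces to $a_{2-k}(2,0)\alpha_{2-k}\equiv 0\pmod{2^\gamma}$ via Lemma~\ref{ak02in} and $\xi_{2-k}\geq\gamma$. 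As in the earlier lemmas, the final verification that the first $\dim M_k(2)$ (or the relevant few) coefficients satisfy the congruence is a finite computation carried out with the same computer program, and the uniqueness/linear-independence of the basis elements then propagates it to all coefficients.
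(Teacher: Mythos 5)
There is a genuine gap: the technique you finally settle on does not reach the coefficients the lemma is about. After dualizing, the claim is that $a_{2-k}(2,m)\equiv 0\pmod{2^\gamma}$ for \emph{odd} $m$, i.e., it concerns the odd-indexed Fourier coefficients of the single form $f_{2-k,2}$. But the $\psi^j\theta_{2-k}$ expansion at the cusp $0$ that drives Proposition~\ref{odd-coefficients} is applied there to $f_{2-k,n}|U_2$, and by construction it only controls the even-indexed coefficients $a_{2-k}(n,2j)$; it says nothing about the odd slot. If instead you try to expand $f_{2-k,2}$ itself as $\sum_j D_j\Phi^j\alpha_{2-k}$, you must include the terms $j=-1,-2$ (i.e.\ $\psi\alpha_{2-k}$ and $\psi^2\alpha_{2-k}$) to match the pole of order $2$ at $\infty$; those terms carry coefficients $D_j$ of $2$-adic valuation $0$, and you give no reason why the odd-indexed coefficients of $\psi^j\alpha_{2-k}$ should be divisible by $2^\gamma$ (they are not in general: already the coefficient of $q$ in $\psi$ is $276=2^2\cdot 69$). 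So no ``congruent to a constant'' conclusion survives. The one route you mention that genuinely isolates odd-indexed coefficients --- two-dissecting $f_{2-k,2}$ --- is never carried out, and it would be a nontrivial Kolberg-style computation as in Lemma~\ref{dissection}, not a corollary of the quoted results. A secondary problem: your proposed constant-term step needs $\min(\omega,\xi_{2-k})\geq\gamma$, which fails for $k=18$, $22$, and $26$ (e.g.\ $\omega=3<5=\gamma$ and $\xi_{-20}=4<5=\gamma$ when $k=22$).

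The paper's actual proof is a short weight-lowering reduction that sidesteps all of this: one checks the identity $f_{2-k,2}E_4^3=f_{14-k,2}+C_kf_{14-k,1}$ (with $E_4^4$ and a congruence modulo $2^5$ when $k=22$), where $E_4^3\equiv 1\pmod{2^4}$. This transports the question to weight $14-k=2-(k-12)$ with $k-12\in\{0,4,6,8,10,14\}$, where $v_2(a_{k-12}(m,2))\geq 7$ is already known from \cite{Doud-Jenkins} and \cite{Griffin}, so that $v_2(a_k(m,2))\geq\min(4,v_2(C_k))=\gamma$ falls out of the explicit constants $C_k$. If you want to repair your write-up, either adopt this trick or honestly perform the two-dissection of $f_{2-k,2}$.
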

\begin{proof} We note that $v_2(a_k(m,2))=v_2(a_{2-k}(2,m))$ by duality.  For $k\neq 22$, we note (by explicit computation and comparing poles) that $f_{2-k,2}E_4^3=f_{2-k+12,2}+C_kf_{2-k+12,1}$, with $C_{12}=744$, $C_{16}=504$, $C_{18}=1248$, $C_{20}=264$, and $C_{26}=768$.  Since $E_4^3\equiv 1\pmod {2^4}$, we see that
\begin{align*}v_2(a_k(m,2))&=v_2(a_{2-k}(2,m))\cr
&\geq\min(v_2(a_{2-k+12}(2,m)),4,v_2(C_k))\cr
&=\min(v_2(a_{k-12}(m,2),4,v_2(C_k))\cr
&=\min(4,v_2(C_k)),\end{align*}
where the last equality arises from the fact that $v_2(a_{k-12}(m,2))\geq 7$ by \cite[Theorem 1.3]{Doud-Jenkins} and \cite[Theorem 2.1]{Griffin}. Since $\min(4,v_2(C_k))=\gamma$, we are done.

For $k=22$, we do a similar computation, using that $E_4^4\equiv 1\pmod{2^5}$:
$$f_{-20,2}E_4^4=f_{-4,2}+ 1248f_{-4,1}.$$
Hence for $m$ odd, $$a_{22}(m,2)=-a_{-20}(2,m)\equiv -a_{-4}(2,m)=a_6(m,2)\equiv 0\pmod{2^5}$$ by duality and \cite[Theorem 1.3]{Doud-Jenkins}.
\end{proof}

We now prove the cases $b-a=1$ and $b-a=2$.
\begin{lemma}\label{j-i=1} Let $k\in\{12,16,18,20,22,26\}$.  For $a\geq 0$ and $m,n>0$ odd, we have
$$a_k(2^am,2^{a+1}n)\equiv 0\pmod{2^\gamma}.$$
\end{lemma}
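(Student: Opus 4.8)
The plan is to prove the lemma by induction on $a$, using the single Hecke relation of Lemma~\ref{Hecke} with $p=2$ together with the two-divisibility facts already in hand for $\tau_k$ (Lemma~\ref{tau}) and for $a_k(m,2)$ with $m$ odd (Lemma~\ref{akm2}). The point is that in each application of Lemma~\ref{Hecke} the terms carrying a factor $2^{k-1}$ die modulo $2^\gamma$ since $k-1\geq 11>\gamma$, and the coefficients with a half-integer index vanish because of parity, so the relation collapses to a short congruence.

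For the base case $a=0$ I would apply Lemma~\ref{Hecke} with $p=2$ and with the two positive integers being our odd $m,n$. Since $m$ is odd, $a_k(m/2,n)=0$; since $n$ is odd, $a_k(m,n/2)=0$; so the relation becomes
$$a_k(m,2)\tau_k(n)+2^{k-1}a_k(2m,n)=a_k(m,2n).$$
Now $a_k(m,2)\equiv 0\pmod{2^\gamma}$ by Lemma~\ref{akm2} and $2^{k-1}\equiv 0\pmod{2^\gamma}$, so $a_k(m,2n)\equiv 0\pmod{2^\gamma}$, which is the case $a=0$.

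For the inductive step, suppose the statement holds for $a-1$ (with $a\geq 1$), i.e.\ $a_k(2^{a-1}m',2^a n')\equiv 0\pmod{2^\gamma}$ for all odd $m',n'$. Applying Lemma~\ref{Hecke} with $p=2$ and with $m,n$ replaced by $2^a m,\,2^a n$ gives
$$a_k(2^{a-1}m,2^a n)+a_k(2^a m,2)\tau_k(2^a n)+2^{k-1}a_k(2^{a+1}m,2^a n)=a_k(2^a m,2^{a+1}n)+2^{k-1}a_k(2^a m,2^{a-1}n).$$
Reduce modulo $2^\gamma$: the two $2^{k-1}$ terms vanish; $a_k(2^{a-1}m,2^a n)\equiv 0$ by the induction hypothesis; and $\tau_k(2^a n)\equiv 0\pmod{2^{\gamma a}}$ by Lemma~\ref{tau}, so (using $a\geq 1$) the middle term $a_k(2^a m,2)\tau_k(2^a n)\equiv 0\pmod{2^\gamma}$ as well. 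Hence $a_k(2^a m,2^{a+1}n)\equiv 0\pmod{2^\gamma}$, completing the induction.

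The computation is elementary and I do not expect a genuine obstacle; the only subtlety is aligning the correct prior results: Lemma~\ref{akm2} is truly needed in the base case, where $\tau_k(n)$ with $n$ odd carries no two-divisibility, whereas in the inductive step one instead leans on the two-divisibility of $\tau_k(2^a n)$ supplied by Lemma~\ref{tau}, so that the precise value of $a_k(2^a m,2)$ is irrelevant there.
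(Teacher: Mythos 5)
Your proof is correct and follows essentially the same route as the paper: the base case uses Lemma~\ref{Hecke} with odd $m,n$ together with Lemma~\ref{akm2}, and the inductive step applies Lemma~\ref{Hecke} with $m,n$ replaced by $2^am,2^an$, killing the $2^{k-1}$ terms and using Lemma~\ref{tau} for the $\tau_k(2^an)$ factor. Your closing remark about which prior result is needed where (Lemma~\ref{akm2} only in the base case, Lemma~\ref{tau} in the inductive step) accurately captures the structure of the paper's argument.
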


\begin{proof}
Applying Lemma~\ref{Hecke}, we obtain
$$a_k(m,2)\tau_k(n)+2^{k-1}a_k(2m,n)=a_k(m,2n).$$
Now $k-1>\gamma$ and by Lemma~\ref{akm2}, $a_k(m,2)$ is divisible by $2^\gamma$, so we see that $a_k(m,2n)\equiv 0\pmod {2^\gamma}$, and  the Lemma is proved for $a=0$.

We now assume that $A>0$ and that the Lemma is true for $a=A-1$.  Applying Lemma~\ref{Hecke} with  $m,n$ replaced by $2^Am$ and $2^An$,
$$a_k(2^{A-1}m,2^An)+a_k(2^Am,2)\tau_k(2^An)\equiv a_k(2^Am,2^{A+1}n)\pmod {2^\gamma}.$$
By Lemma~\ref{tau}, $\tau_k(2^An)\equiv 0\pmod {2^\gamma}$, so by the induction hypothesis we see that  $a_k(2^Am,2^{A+1}n)\equiv 0\pmod{2^\gamma}$.\end{proof}

\begin{lemma}\label{j-i=2} For $k\in\{12,16,18,20,22,26\}$,  $a\geq 0$ and $m,n\in\Z$ positive and odd,
$$a_k(2^am,2^{a+2}n)\equiv 0\pmod {2^{2\gamma}}.$$
\end{lemma}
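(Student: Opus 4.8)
The plan is to induct on $a$, using only the $p=2$ case of the Hecke relation in Lemma~\ref{Hecke} together with the two-divisibility of $\tau_k$ (Lemma~\ref{tau}) and of $a_k(m,2)$ for odd $m$ (Lemma~\ref{akm2}). The numerical fact that drives the argument is that $k-1\geq 2\gamma$ for each of the six weights, so every term carrying an explicit factor $2^{k-1}$ is automatically $\equiv 0\pmod{2^{2\gamma}}$; in particular the ``diagonal'' coefficients $a_k(2^{a+1}m,2^{a+1}n)$ and $a_k(2^am,2^an)$, for which no divisibility is known, will never obstruct the argument because they occur only multiplied by $2^{k-1}$.

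For the base case $a=0$ I would apply Lemma~\ref{Hecke} with the odd integer $m$ in the first slot and with $n$ replaced by $2n$. Since $m/2$ is nonintegral the term $a_k(m/2,2n)$ vanishes, and the relation collapses to
$$a_k(m,2)\,\tau_k(2n)+2^{k-1}a_k(2m,2n)=a_k(m,4n)+2^{k-1}a_k(m,n).$$
Now $v_2(a_k(m,2))\geq\gamma$ by Lemma~\ref{akm2} and $v_2(\tau_k(2n))\geq\gamma$ by Lemma~\ref{tau}, so the first product is divisible by $2^{2\gamma}$, while the two $2^{k-1}$-terms vanish modulo $2^{2\gamma}$ since $k-1\geq 2\gamma$; hence $a_k(m,4n)\equiv 0\pmod{2^{2\gamma}}$ for all odd $m,n$.

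For the inductive step, assuming the statement for $a=A-1$, I would apply Lemma~\ref{Hecke} with $2^Am$ in the first slot and $n$ replaced by $2^{A+1}n$, obtaining
$$a_k(2^{A-1}m,2^{A+1}n)+a_k(2^Am,2)\,\tau_k(2^{A+1}n)+2^{k-1}a_k(2^{A+1}m,2^{A+1}n)=a_k(2^Am,2^{A+2}n)+2^{k-1}a_k(2^Am,2^An).$$
The first term is $\equiv 0\pmod{2^{2\gamma}}$ by the induction hypothesis; the second is $\equiv 0\pmod{2^{2\gamma}}$ because $v_2(\tau_k(2^{A+1}n))\geq\gamma(A+1)\geq 2\gamma$ for $A\geq 1$ (Lemma~\ref{tau}); and the remaining three terms carry a factor $2^{k-1}$ and so vanish modulo $2^{2\gamma}$. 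Solving for $a_k(2^Am,2^{A+2}n)$ finishes the induction.

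I do not expect a serious obstacle: the only things to verify are the inequality $k-1\geq 2\gamma$ weight by weight (the tightest case is $k=22$, $\gamma=5$, where $k-1=21\geq 10$) and the observation that in the base case one genuinely needs \emph{both} factors of $2^\gamma$ — one from $a_k(m,2)$, one from $\tau_k(2n)$ — to reach exponent $2\gamma$, whereas in the inductive step $\tau_k(2^{A+1}n)$ alone already supplies exponent $2\gamma$, so no divisibility of $a_k(2^Am,2)$ is needed there.
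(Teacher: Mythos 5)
Your proposal is correct and follows essentially the same route as the paper: induction on $a$, applying Lemma~\ref{Hecke} with $p=2$ to $(m,2n)$ in the base case and to $(2^Am,2^{A+1}n)$ in the inductive step, and invoking Lemmas~\ref{tau} and \ref{akm2} together with $2\gamma<k-1$ to kill the remaining terms. Your added remark that the inductive step needs no divisibility of $a_k(2^Am,2)$, since $\tau_k(2^{A+1}n)$ alone supplies $2^{2\gamma}$, matches the paper's usage exactly.
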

\begin{proof}
Replacing $n$ by $2n$ in Lemma~\ref{Hecke}, we obtain
$$a_k(m,4n)\equiv a_k(m,2)\tau_k(2n)\pmod {2^{k-1}}.$$
By Lemmas~\ref{tau} and \ref{akm2}, and the fact that $2\gamma<k-1$, we see that $a_k(m,4n)\equiv 0\pmod {2^{2\gamma}}$.

Letting $A>0$ and assuming the theorem for $a=A-1$, we apply Lemma~\ref{Hecke} with  $m,n$ replaced by $2^{A}m$ and $2^{A+1}n$, to obtain
$$a_k(2^Am,2^{A+2}n)\equiv a_k(2^{A-1}m,2^{A+1}n)+a_k(2^{A}m,2)\tau_k(2^{A+1}n)\pmod {2^{k-1}}.$$
Lemma~\ref{tau} and the induction hypothesis then imply that $a_k(2^Am,2^{A+2}n)\equiv 0\pmod {2^{2\gamma}}$.
\end{proof}

Next, we prove a weaker divisibility than our final result, from which we will derive the stronger divisibility that we need.

\begin{lemma} \label{i<j} Let $k\in\{12,16,18,20,22,26\}$. For $0\leq a<b$ and $m,n$ positive and odd, we have that
$$a_k(2^am,2^bn)\equiv 0\pmod{2^{\gamma(b-a)}}.$$
\end{lemma}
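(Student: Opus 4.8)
The plan is to induct on the difference $d = b-a$, using the already-established base cases $d=1$ (Lemma~\ref{j-i=1}) and $d=2$ (Lemma~\ref{j-i=2}), together with the induction on $a$ within each fixed $d$. For the inductive step, I would feed Lemma~\ref{Hecke2} the substitution $m \mapsto 2^a m$, $n \mapsto 2^{b-1}n$, with $p=2$. Since $m$ is odd, the terms $\delta_{2^a m,2}$ and $\delta_{2^am,4}$ vanish for $a \geq 2$ (and the small cases $a=0,1$ are handled by the nested induction on $a$, exactly as in Lemmas~\ref{j-i=1} and \ref{j-i=2}). The key point is that $2^{2k-2}$ times anything is divisible by a huge power of $2$, and $k-1 > \gamma(b-a)$ in all the ranges we care about once $b-a$ is bounded by the order-of-pole considerations; more carefully, I expect to use $2k-2$ and $k-1$ to be large relative to $\gamma d$ for the specific $d$ under consideration, so the congruence from Lemma~\ref{Hecke2} collapses modulo $2^{\gamma d}$ to a relation among $a_k(2^am, 2^{b}n)$, $a_k(2^am, 2^{b-2}n)$, and $a_k(2^am,4)\tau_k(2^{b-1}n)$.

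The heart of the argument is then a divisibility bookkeeping: by the induction hypothesis on $d$, the term $a_k(2^am, 2^{b-2}n)$ is divisible by $2^{\gamma(d-2)}$; by Lemma~\ref{tau}, $\tau_k(2^{b-1}n)$ is divisible by $2^{\gamma(b-1)}$; and by Lemma~\ref{ak2m4} (the two-dissection lemma), $a_k(2^am,4)$ — or rather, after reducing the first argument modulo the Hecke recursion, a related coefficient $a_k(2m,4)$ — is divisible by $2^{\rho+\gamma}$. The issue is that the two terms being subtracted are divisible by $2^{\gamma(d-2)}$ and by something much larger; I want the \emph{smaller} of these exponents, namely $\gamma(d-2)$, to be at least $\gamma d$, which is false. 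So the naive approach fails, and this is exactly where care is needed: the $2^{2k-2}a_k(2^am, 2^{b-2}n)$ term actually carries a factor $2^{2k-2}$, not merely the $2^{\gamma(d-2)}$ from induction — so its $2$-adic valuation is at least $2k-2+\gamma(d-2)$, which comfortably exceeds $\gamma d$ since $2k-2 > 2\gamma$. Similarly $2^{2k-2}a_k(2^am,2^{b}n)$ on the right, if it appeared, would be negligible; but the $a_k(2^am,2^bn)$ term we are solving for appears \emph{without} such a factor (it is the $q^{np^2}$ coefficient), so we genuinely isolate it.

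Let me restate the mechanism cleanly. From Lemma~\ref{Hecke2} with $p=2$, $m\mapsto 2^a m$, $n \mapsto 2^{b-1}n$, and discarding every term carrying a factor $2^{2k-2}$ or $2^{k-1}$ (all of which have valuation exceeding $\gamma d$, using $k-1 > \gamma d$ in the relevant finite range and $2k-2 > 2\gamma$), we are left modulo $2^{\gamma d}$ with
$$a_k(2^a m/4,\, 2^{b-1}n) + a_k(2^am, 4)\,\tau_k(2^{b-1}n) \equiv a_k(2^am,\, 2^{b+1}n).$$
For $a \geq 2$ the first term is $a_k(2^{a-2}m, 2^{b-1}n)$, which by the outer induction on $a$ (at difference $d' = (b-1)-(a-2) = d+1$... ) — here I need to be slightly more careful and instead arrange the induction so that this term is covered, perhaps by inducting on $a$ first for fixed $d$ as in the $d=1,2$ lemmas, in which case it is $a_k(2^{a-2}m, 2^{(a-2)+(d+1)}n)$ and I would want the hypothesis at difference $d+1$; alternatively, re-index the Hecke relation to land on difference $d$ exactly. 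The term $a_k(2^am,4)\tau_k(2^{b-1}n)$ is divisible by $2^{\rho+\gamma} \cdot 2^{\gamma(b-1)}$ by Lemmas~\ref{ak2m4} and~\ref{tau}, hence certainly by $2^{\gamma d}$ once $b-1 \geq d$, i.e.\ $a \geq 1$. The base case $a=0$ (and the small-$d$ cases $d \leq 2$) are already done. The main obstacle, and the place I expect to spend the most effort, is pinning down the exact indexing of the double induction — on $d = b-a$ and on $a$ — so that every term produced by Lemma~\ref{Hecke2} is covered by a strictly earlier case, and verifying that the crude inequalities $k-1 > \gamma(b-a)$ hold throughout the finite range of $(b-a)$ that actually arises (this range being bounded because the relevant weakly holomorphic forms have poles of bounded order, as exploited in Lemmas~\ref{ak2in2} and~\ref{ak2m4}), so that the large powers of $2$ in the Hecke relation really do dominate.
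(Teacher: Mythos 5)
Your overall strategy (a double induction on $d=b-a$ and on $a$, driven by a Hecke relation) has the right shape, but as written the argument does not close, and you have flagged the critical point yourself without resolving it. Three concrete problems. First, the substitution $n\mapsto 2^{b-1}n$ in Lemma~\ref{Hecke2} produces $a_k(2^am,2^{b+1}n)$, which sits at difference $d+1$, not $d$; to land on $a_k(2^am,2^bn)$ you need $n\mapsto 2^{b-2}n$, and then the surviving term at difference $d$ is $a_k(2^{a-2}m,2^{b-2}n)$, so the inner induction must step $a$ down by $2$, with base cases $a=0$ and $a=1$ where that term vanishes because its first argument is nonintegral. You never pin this down. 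Second, your final paragraph asserts that $b-a$ ranges over a bounded set controlled by pole orders and that $k-1>\gamma(b-a)$ holds throughout; this is false --- the lemma is for all $b>a\geq 0$, and for large $d$ one has $k-1<\gamma d$. The inequality actually needed is $k-1\geq 2\gamma$ (resp.\ $2k-2\geq 4\gamma$), because the $2^{k-1}$- and $2^{2k-2}$-weighted terms carry the induction hypothesis at difference $d-2$ (resp.\ $d-4$) on top of the explicit power of $2$; you state this correctly in your middle paragraph and then contradict it at the end. Third, Lemma~\ref{ak2m4} applies only to $a_k(2m,4)$ with $m$ odd and says nothing about $a_k(2^am,4)$ for $a\neq 1$. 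Fortunately its strength is not needed here: $\tau_k(2^{b-2}n)$ already contributes $2^{\gamma(b-2)}$, so you only need $a_k(2^am,4)$ divisible by $2^{\gamma(2-a)}$, which is vacuous for $a\geq 2$ and follows from Lemmas~\ref{j-i=1} and~\ref{j-i=2} for $a=1$ and $a=0$.

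For comparison, the paper avoids all of this by using the single Hecke relation (Lemma~\ref{Hecke}) with $m\mapsto 2^am$ and $n\mapsto 2^{a+N-1}n$. That identity relates $a_k(2^am,2^{a+N}n)$ directly to $a_k(2^{a-1}m,2^{a+N-1}n)$ --- same difference $N$, with $a$ lowered by $1$, vanishing when $a=0$ --- plus $a_k(2^am,2)\tau_k(2^{a+N-1}n)$, which is killed modulo $2^{\gamma N}$ by Lemmas~\ref{akm2} and~\ref{tau}, and two terms carrying $2^{k-1}$ at difference $N-2$, killed by the outer induction since $k-1>2\gamma$. If you repair your indexing as above, your $T_2^2$ route can likely be made to work, but the $T_2$ route is strictly simpler and is what the paper does.
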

\begin{proof} Note that the theorem has already been proven for $b-a=1$ (Lemma~\ref{j-i=1}) and for $b-a=2$ (Lemma~\ref{j-i=2}).  Assume, by way of induction, that the theorem is true for $0<b-a<N$.  Then applying Lemma~\ref{Hecke} with $m$ and $n$ replaced by $2^am$ and $2^{a+N-1}n$, we obtain
\begin{align*}a_k(2^{a-1}m,2^{a+N-1}n)+a_k(2^am,2)&\tau_k(2^{a+N-1}n)+2^{k-1}a_k(2^{a+1}m,2^{a+N-1}n)\cr&=a_k(2^am,2^{a+N}n)+2^{k-1}a_k(2^am,2^{a+N-2}n)\end{align*}

We note that $a_k(2^am,2)\tau_k(2^{a+N-1}n)\equiv 0\pmod {2^{\gamma N}}$ (using Lemma~\ref{tau} if $a>0$, and using Lemmas~\ref{tau} and \ref{akm2} if $a=0$).  Further, by our induction hypothesis, $2^{k-1}a_k(2^{a+1}m,2^{a+N-1}n)$ and $2^{k-1}a_k(2^am,2^{a+N-2}n)$ are both divisible by $2^{k-1+\gamma(N-2)}$, which (since $k-1>2\gamma$) is divisible by $2^{\gamma N}$. Hence, we see that
$$a_k(2^am,2^{a+N}n)\equiv a_k(2^{a-1}m,2^{a+N-1}n)\pmod {2^{\gamma N}}.$$
For $a=0$, the right hand side of this congruence is 0.  Hence, by a simple induction on $a$, the left hand side is 0 modulo $2^{\gamma N}$ for all nonnegative $a$.  This completes the induction.
\end{proof}

For coefficients $a_k(2^am,2^bn)$ with $b>a$ and $a=0$, the congruence in Lemma~\ref{i<j} is our final result.  However, for $a>0$, we can derive a better congruence by bootstrapping from this result.  The proof will again be by induction on $b-a$, but we need an additional divisibility result on the coefficients $a_k(2m,2^bn)$ with $b\geq2$ to complete the induction.

\begin{lemma}\label{ak2m2jn} Let $k\in\{12,16,18,20,22,26\}$. For $b\geq 2$ and $m,n$ positive and odd,
$$a_k(2m,2^bn)\equiv 0\pmod {2^{\rho+\gamma(b-1)}}.$$
\end{lemma}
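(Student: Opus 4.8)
The plan is to mimic the proof of Lemma~\ref{ak2m4} (the $b=2$ case, after noting $\rho+\gamma(b-1)=\rho+\gamma$ there) but to push the dissection machinery one step further for general $b\geq 2$, and then to run an induction on $b$ using the Hecke relation of Lemma~\ref{Hecke}. By duality, the claim $a_k(2m,2^bn)\equiv 0\pmod{2^{\rho+\gamma(b-1)}}$ is equivalent to $a_{2-k}(2^bn,2m)\equiv 0\pmod{2^{\rho+\gamma(b-1)}}$, i.e.\ we want the coefficient of $q^{2m}$ (with $m$ odd, so the exponent is $\equiv 2\pmod 4$) in $f_{2-k,2^bn}$ to be highly $2$-divisible. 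So first I would set up a two-dissection argument: form $g_b(q^2)=(f_{2-k,2^bn})_0$, pass to $g_b(q)$, take the odd dissection $(g_b(q))_1$, and show this is $\equiv 0\pmod{2^{\rho+\gamma(b-1)}}$, exactly as in Lemma~\ref{ak2m4}.

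The base case $b=2$ is already Lemma~\ref{ak2m4} (since $\rho+\gamma(2-1)=\rho+\gamma$). For the inductive step I would instead proceed via Lemma~\ref{Hecke} with $p=2$, applied with $m\mapsto 2m$ and $n\mapsto 2^{b-1}n$ (or with the roles arranged so that the target coefficient $a_k(2m,2^bn)$ appears on the right). Lemma~\ref{Hecke} gives
\[
a_k(m,2^{b-1}n)+a_k(2m,2)\tau_k(2^{b-1}n)+2^{k-1}a_k(4m,2^{b-1}n)=a_k(2m,2^bn)+2^{k-1}a_k(2m,2^{b-2}n).
\]
On the left, $a_k(m,2^{b-1}n)\equiv 0\pmod{2^{\gamma(b-1)}}$ by Lemma~\ref{i<j}, and the term $a_k(2m,2)\tau_k(2^{b-1}n)$ is divisible by $2^{\gamma}\cdot 2^{\gamma(b-1)}=2^{\gamma b}$ using Lemma~\ref{akm2} and Lemma~\ref{tau}; the term $2^{k-1}a_k(4m,2^{b-1}n)$ is divisible by $2^{k-1}$, which exceeds $\rho+\gamma(b-1)$ for the relevant weights since $k-1-\rho>0$ and $\gamma(b-1)$ is dwarfed... no, wait — that last claim is false for large $b$, so I need to be careful: $2^{k-1}a_k(4m,2^{b-1}n)$ should be controlled via Lemma~\ref{i<j} ($4m=2^2m$, so this is $\equiv 0\pmod{2^{k-1+\gamma(b-3)}}$, and $k-1+\gamma(b-3)\geq \rho+\gamma(b-1)$ reduces to $k-1\geq\rho+2\gamma$, which holds for each weight). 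Similarly $2^{k-1}a_k(2m,2^{b-2}n)$ on the right is handled by the induction hypothesis ($\equiv 0\pmod{2^{k-1+\rho+\gamma(b-3)}}$) or directly by Lemma~\ref{i<j}. That leaves $a_k(2m,2^bn)\equiv a_k(m,2^{b-1}n)\cdot(\text{error})\pmod{2^{\rho+\gamma(b-1)}}$ — but $a_k(m,2^{b-1}n)$ is only known to be divisible by $2^{\gamma(b-1)}$, not $2^{\rho+\gamma(b-1)}$, so a naive induction with the Hecke relation alone does not close the gap of $2^\rho$.

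Consequently, the real work — and the main obstacle — is the direct dissection computation establishing the result for all $b\geq 2$ (or at least bootstrapping from $b=2$): one must express $f_{2-k,2^bn}$, or more efficiently the image of a suitable form under $U_{2^{b-1}}$ or $U_2$ iterated, in terms of Kolberg's functions $\varphi,Q,R,S,T$, and track the exact power of $2$ surviving in the odd part of its two-dissection. The pattern from $Q^{16}$ worked out in the text — where $(Q^{16})_1\equiv 16qR^{20}\pmod{2^9}$, i.e.\ one gains a factor $2^4$ per two-dissection relative to the trivial bound — suggests that each successive two-dissection contributes an extra factor of $2^\gamma$ to the odd coefficients, which is exactly the shape of $2^{\rho+\gamma(b-1)}$; so I would compute $f_{2-k,2}|U_{2^{b-1}}$ (using Theorem~\ref{Up2} and its iterates to control poles at $0$), write it on the basis $\{\Phi^i\alpha_{2-k}\}$, reduce modulo the target power of $2$, and check the odd dissection vanishes. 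Verifying this uniformly in $b$ — rather than for each fixed small $b$ — is the delicate point, and I expect it to require either an inductive Kolberg-style identity for the relevant powers of $Q$ modulo powers of $2$, or a reduction of the general-$b$ case to the $b=2$ case via the $U_2$-action combined with Lemma~\ref{i<j} to absorb the excess.
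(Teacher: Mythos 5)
You have correctly located the obstruction — a single application of Lemma~\ref{Hecke} to $f_{k,2m}$ produces the term $a_k(m,2^{b-1}n)$, which is only divisible by $2^{\gamma(b-1)}$, so the induction cannot close the gap of $2^\rho$ — but you have not found the fix, and the fallback you propose (a two-dissection computation uniform in $b$, or an analysis of $f_{2-k,2}|U_{2^{b-1}}$) is exactly the ``delicate point'' you admit you cannot carry out. The missing idea is to apply the Hecke operator \emph{twice}, i.e.\ to use Lemma~\ref{Hecke2} with $m\mapsto 2m$ and $n\mapsto 2^{b-2}n$. This kills the problematic term: the coefficient $a_k(2m/4,\cdot)=a_k(m/2,\cdot)$ vanishes identically because $4\nmid 2m$, so no bare $a_k(\text{odd},\cdot)$ coefficient survives. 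Moreover, the double relation replaces $a_k(2m,2)$ (an $a=b$ coefficient about which nothing is known — note also that your appeal to Lemma~\ref{akm2} for $a_k(2m,2)$ is invalid, since that lemma requires the first index odd) by $a_k(2m,4)$, which is precisely what the dissection computation of Lemma~\ref{ak2m4} controls to the full strength $2^{\rho+\gamma}$. Combined with $\tau_k(2^{b-2}n)\equiv 0\pmod{2^{\gamma(b-2)}}$ from Lemma~\ref{tau}, the product $a_k(2m,4)\tau_k(2^{b-2}n)$ is divisible by exactly the target power $2^{\rho+\gamma(b-1)}$.

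With Lemma~\ref{Hecke2} in hand the rest is bookkeeping: the remaining terms carry factors $2^{k-1}$ or $2^{2k-2}$ and are disposed of either by inequalities such as $\rho+\gamma\le k-1$ and $\rho+5\gamma\le 2k-2$ (for $2\le b\le 6$, where one also uses that $a_k(2m,2^{b-4}n)=0$ or that the relevant $\delta$ equals $1$), or, for $b>6$, by Lemma~\ref{i<j} applied to $a_k(8m,2^{b-2}n)$ together with induction on $b$ applied to $a_k(2m,2^{b-4}n)$. So the dissection machinery is needed only once, for the single coefficient $a_k(2m,4)$ (Lemma~\ref{ak2m4}), not uniformly in $b$; without the second application of $T_2$ your argument does not go through.
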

\begin{proof}
Applying Lemma~\ref{Hecke2} with $m$ and $n$ replaced by $2m$ and $2^{B-2}n$, we obtain
\begin{align*}2^{2k-2}a_k(8m,2^{B-2}n)&+(1-\delta_{2^{B-2}n,2})2^{k-1}a_k(2m,2^{B-2}n)+a_k(2m,4)\tau_k(2^{B-2}n)\cr&=2^{2k-2}a_k(2m,2^{B-4}n)+a_k(2m,2^Bn).\end{align*}
For $B=2$, we note that $\rho+(B-1)\gamma=\rho+\gamma\leq k-1$.  For $3\leq B\leq 6$, we note that $\rho+(B-1)\gamma\leq\rho+5\gamma\leq 2k-2$ and that $(1-\delta_{2^{B-2}n,2})=0$.  In both cases, the formula above reduces to
$$a_k(2m,2^Bn)\equiv a_k(2m,4)\tau_k(2^{B-2}n)\pmod{2^{\rho+(B-1)\gamma}}.$$
Now, by Lemma~\ref{ak2m4}, we know that $a_k(2m,4)$ is divisible by $2^{\rho+\gamma}$, and by Lemma~\ref{tau}, we know that $\tau_k(2^{B-2}n)$ is divisible by $2^{\gamma(B-2)}$.  Hence, for $2\leq b\leq 6$ we have that $a_k(2m,2^bn)\equiv 0\pmod {2^{\rho+(b-1)\gamma}}$.

We will now assume that $B>6$, and that the theorem is true for all $b$ with $2\leq b<B$.  Now, since $2k-2>4\gamma$, the induction hypothesis implies that $2^{2k-2}a_k(2m,2^{B-4}n)$ vanishes modulo $2^{\rho+(B-1)\gamma}$. Lemma~\ref{i<j} and the fact that $2k-2>\rho+4\gamma$ imply that $2^{2k-2}a_k(8m,2^{B-2}n)$ vanishes modulo $2^{\rho+(B-1)\gamma}$.  By Lemmas~\ref{tau} and \ref{ak2m4}, $a_k(2m,4)\tau_k(2^{B-2}n)$ vanishes modulo $2^{\rho+(B-1)\gamma}$.  Hence, $a_k(2m,2^Bn)\equiv 0\pmod {2^{\rho+(B-1)\gamma}}$, and by induction the theorem is true for all $b\geq 2$.
\end{proof}

The proof for $b-a=1$ proceeds similarly to the proof of Lemma~\ref{j-i=1}.
\begin{lemma}\label{ak2im2i+1n} Let $k\in\{12,16,18,20,22,26\}$. For $a\geq 1$ and $m,n$ positive and odd,
$$a_k(2^am,2^{a+1}n)\equiv 0\pmod {2^{\rho+\gamma}}.$$
\end{lemma}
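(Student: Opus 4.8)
The plan is to argue by induction on $a$, in close analogy with the proof of Lemma~\ref{j-i=1}, except that the base case and the term coming from a $\tau_k$-value are now supplied by the sharper estimates already proved.

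For the base case $a=1$, the assertion $a_k(2m,2^2n)\equiv 0\pmod{2^{\rho+\gamma}}$ is exactly Lemma~\ref{ak2m2jn} specialized to $b=2$, since then $\rho+\gamma(b-1)=\rho+\gamma$; so there is nothing to prove.

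For the inductive step I would fix $A\geq 2$, assume the result for $a=A-1$, and apply Lemma~\ref{Hecke} with $p=2$ and with $m,n$ replaced by $2^Am$ and $2^An$, obtaining
$$a_k(2^{A-1}m,2^An)+a_k(2^Am,2)\tau_k(2^An)+2^{k-1}a_k(2^{A+1}m,2^An)=a_k(2^Am,2^{A+1}n)+2^{k-1}a_k(2^Am,2^{A-1}n).$$
I would then kill every term except $a_k(2^Am,2^{A+1}n)$ modulo $2^{\rho+\gamma}$: the term $a_k(2^{A-1}m,2^An)$ vanishes by the inductive hypothesis (legitimate since $A-1\geq 1$); each term carrying the factor $2^{k-1}$ vanishes because $k-1\geq\rho+\gamma$ for every $k$ in our list; and $a_k(2^Am,2)\tau_k(2^An)$ vanishes because $v_2(a_k(2^Am,2))\geq\chi$ by Lemma~\ref{ak2in2} (applicable as $A>1$) and $v_2(\tau_k(2^An))\geq\gamma A\geq 2\gamma$ by Lemma~\ref{tau}, together with the inequality $\chi+2\gamma\geq\rho+\gamma$. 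Hence $a_k(2^Am,2^{A+1}n)\equiv 0\pmod{2^{\rho+\gamma}}$, which closes the induction.

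There is essentially no obstacle beyond bookkeeping: the argument goes through provided the elementary inequalities $k-1\geq\rho+\gamma$ and $\chi+\gamma\geq\rho$ hold for each $k\in\{12,16,18,20,22,26\}$, and a glance at the definitions of $\gamma$, $\rho$, and $\chi$ confirms both in all six cases. (If one tried to use only $v_2(\tau_k(2^An))\geq\gamma$, one would need $\chi\geq\rho$, which fails for $k=26$; exploiting the full divisibility $\gamma A\geq 2\gamma$ available because $A\geq 2$ circumvents this.)
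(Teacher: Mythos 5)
Your proof is correct and follows essentially the same route as the paper: the base case via Lemma~\ref{ak2m2jn} with $b=2$, then induction on $a$ using Lemma~\ref{Hecke} at $(2^Am,2^An)$, discarding the $2^{k-1}$ terms since $k-1\geq\rho+\gamma$ and the $\tau_k$ term via $v_2(a_k(2^Am,2))\geq\chi$ and $v_2(\tau_k(2^An))\geq 2\gamma$ with $\chi+\gamma\geq\rho$. The parenthetical observation about why the extra factor of $2^\gamma$ from $A\geq 2$ is needed (because $\chi<\rho$ when $k=26$) matches exactly the inequality $\chi+\gamma>\rho$ invoked in the paper.
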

\begin{proof}
By Lemma~\ref{ak2m2jn} we have that $a_k(2m,4n)\equiv 0\pmod{2^{\rho+\gamma}}$.  Hence, the theorem is true for $a=1$.

Assume now that $A>1$, and that the theorem is true for all $1\leq a<A$.  By Lemma~\ref{Hecke} with $m,n$ replaced by $2^Am$ and $2^An$, we find
$$a_k(2^Am,2^{A+1}n)\equiv a_k(2^{A-1}m,2^An)+a_k(2^Am,2)\tau_k(2^An)\pmod{2^{k-1}}.$$
We know that $\rho+\gamma<k-1$, that for $A>1$, $2^{2\gamma}|\tau_k(2^An)$ by Lemma~\ref{tau}, that $2^\chi|a_k(2^Am,2)$ by Lemma~\ref{ak2in2},  that $2^{\rho+\gamma}|a_k(2^{A-1}m,2^An)$ by the induction hypothesis, and that $\chi+\gamma>\rho$. Hence, $a_k(2^Am,2^{A-1}n)\equiv 0\pmod{2^{\rho+\gamma}}$, completing our induction.
\end{proof}

In order to complete the case where $b-a=2$ (Lemma~\ref{i-i+2}), we need the following result on coefficients for which $a=b$.
\begin{lemma}\label{difference} Let $k\in\{12,16,18,20,22,26\}$. For $b\geq 1$ and $m,n$ odd, $$a_k(2^{b+1}m,2^{b+1}n)\equiv a_k(2^bm,2^bn)\pmod{2^{\chi+\gamma}}.$$ \end{lemma}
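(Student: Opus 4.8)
The plan is to apply Lemma~\ref{Hecke} with $p=2$, replacing $m$ by $2^{b+1}m$ and $n$ by $2^bn$.  This produces
\begin{align*}
a_k(2^bm,2^bn)+a_k(2^{b+1}m,2)\tau_k(2^bn)&+2^{k-1}a_k(2^{b+2}m,2^bn)\\
&=a_k(2^{b+1}m,2^{b+1}n)+2^{k-1}a_k(2^{b+1}m,2^{b-1}n),
\end{align*}
and hence, after rearranging,
\begin{align*}
a_k(2^{b+1}m,2^{b+1}n)-a_k(2^bm,2^bn)&=a_k(2^{b+1}m,2)\tau_k(2^bn)\\
&\quad+2^{k-1}a_k(2^{b+2}m,2^bn)-2^{k-1}a_k(2^{b+1}m,2^{b-1}n).
\end{align*}
The whole problem then reduces to showing that each of the three terms on the right is divisible by $2^{\chi+\gamma}$.

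For the first term, I would use that $b+1>1$, so Lemma~\ref{ak2in2} gives $a_k(2^{b+1}m,2)\equiv 0\pmod{2^\chi}$, while Lemma~\ref{tau} gives $\tau_k(2^bn)\equiv 0\pmod{2^{\gamma b}}$, which is divisible by $2^\gamma$ because $b\geq 1$; hence the product is divisible by $2^{\chi+\gamma}$.  For the remaining two terms I would simply note that each carries a factor $2^{k-1}$, and a quick case check gives $k-1\geq\chi+\gamma$ for all six weights (the tightest case being $k=12$, where $11\geq 10$; the others give $15,17,19,21,25$ against values of $\chi+\gamma$ at most $13$), so $2^{k-1}$ times any integer is already $\equiv 0\pmod{2^{\chi+\gamma}}$.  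Summing the three contributions yields the asserted congruence.

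I do not anticipate any genuine obstacle here; the statement is essentially an immediate bookkeeping consequence of the Hecke relation in Lemma~\ref{Hecke} together with the divisibility inputs of Lemmas~\ref{ak2in2} and \ref{tau}, with no new computation needed.  The only point worth flagging is the legitimacy of the substitution when $b=1$: in that case the index $2^{b-1}n=n$ is odd but still a valid nonnegative integer, so $a_k(2^{b+1}m,2^{b-1}n)$ is well defined, and in any event the accompanying factor $2^{k-1}$ makes that term harmless.
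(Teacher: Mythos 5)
Your proposal is correct and is essentially the paper's own proof: the same application of Lemma~\ref{Hecke} with $m\mapsto 2^{b+1}m$, $n\mapsto 2^bn$, followed by the same divisibility observations (the paper cites Proposition~\ref{lower} where you cite Lemma~\ref{ak2in2} for $2^\chi\mid a_k(2^{b+1}m,2)$, but these are interchangeable here). Your numerical check that $k-1\geq\chi+\gamma$ in all six weights is accurate, so no further comment is needed.
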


\begin{proof}
Applying Lemma~\ref{Hecke} with  $m,n$ replaced by $2^{b+1}m$ and $2^bn$, we obtain
\begin{align*}a_k(2^bm,2^bn)+a_k(2^{b+1}m,2)&\tau_k(2^bn)+2^{k-1}a_k(2^{b+2}m,2^bn)\cr&=a_k(2^{b+1}m,2^{b+1}n)+2^{k-1}a_k(2^{b+1}m,2^{b-1}n).\end{align*}
Since $k-1>\chi+\gamma$, and by Lemma~\ref{tau} and Proposition~\ref{lower} we know that $a_k(2^{b+1}m,2)\tau_k(2^bn)$ vanishes modulo $2^{\chi+\gamma}$, the result follows.
\end{proof}
\begin{lemma}\label{i-i+2} Let $k\in\{12,16,18,20,22,26\}$. For $a\geq 1$ and $m,n$ positive and odd,
$$a_k(2^am,2^{a+2}n)\equiv 0\pmod{2^{\rho+2\gamma}}.$$
\end{lemma}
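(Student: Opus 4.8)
The plan is to prove this by induction on $a$, exactly as announced, using a single application of Lemma~\ref{Hecke} together with Lemma~\ref{difference} to absorb the two ``diagonal'' coefficients that arise. The base case $a=1$ requires nothing new: it asks that $a_k(2m,2^{3}n)\equiv 0\pmod{2^{\rho+2\gamma}}$, which is precisely the $b=3$ case of Lemma~\ref{ak2m2jn}.

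For the inductive step I would fix $A\geq 2$, assume the lemma for every $a$ with $1\leq a<A$, and apply Lemma~\ref{Hecke} with $p=2$ after replacing $m$ by $2^{A}m$ and $n$ by $2^{A+1}n$; since $A\geq 2$ the argument $2^{A}m/2=2^{A-1}m$ is an honest integer, so no term is forced to vanish. Solving for the coefficient we want produces
\begin{align*}
a_k(2^{A}m,2^{A+2}n)=\;&a_k(2^{A-1}m,2^{A+1}n)+a_k(2^{A}m,2)\,\tau_k(2^{A+1}n)\\
&+2^{k-1}\bigl(a_k(2^{A+1}m,2^{A+1}n)-a_k(2^{A}m,2^{A}n)\bigr),
\end{align*}
and the remaining work is to show each summand on the right is divisible by $2^{\rho+2\gamma}$. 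The first summand again has index difference $2$ with first index $2^{A-1}m$ and $A-1\geq 1$, so it vanishes modulo $2^{\rho+2\gamma}$ by the inductive hypothesis. For the second, Lemma~\ref{ak2in2} applies (as $A>1$) to give $2^{\chi}\mid a_k(2^{A}m,2)$, and Lemma~\ref{tau} gives $2^{\gamma(A+1)}\mid\tau_k(2^{A+1}n)$ with $\gamma(A+1)\geq 3\gamma$; hence this term is divisible by $2^{\chi+3\gamma}$, and $\chi+3\gamma\geq\rho+2\gamma$ because $\chi+\gamma>\rho$ (the inequality already used in the proof of Lemma~\ref{ak2im2i+1n}).

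The third summand is the crux, and it is exactly why Lemma~\ref{difference} was proved first: individually the diagonal coefficients $a_k(2^{A+1}m,2^{A+1}n)$ and $a_k(2^{A}m,2^{A}n)$ need not be even (see the remark after Theorem~\ref{main}), so a direct bound on either is hopeless, but they occur only through their difference and multiplied by $2^{k-1}$. Lemma~\ref{difference} with $b=A$ gives $a_k(2^{A+1}m,2^{A+1}n)\equiv a_k(2^{A}m,2^{A}n)\pmod{2^{\chi+\gamma}}$, so the third summand is divisible by $2^{k-1+\chi+\gamma}$, which is at least $2^{\rho+2\gamma}$ since $k-1\geq\rho$ and $\chi\geq\gamma$ for each of the six weights. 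Adding the three estimates completes the induction. The only genuine obstacle, then, is the presence of the diagonal terms; everything else reduces to the finite check of a handful of inequalities among $\gamma$, $\rho$, $\chi$, and $k$.
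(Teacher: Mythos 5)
Your proposal is correct and follows essentially the same route as the paper: the same base case via Lemma~\ref{ak2m2jn} with $b=3$, the same application of Lemma~\ref{Hecke} with $m,n$ replaced by $2^{A}m$ and $2^{A+1}n$, and the same use of Lemma~\ref{difference} to control the difference of diagonal terms, with the numerical inequalities checked correctly. Nothing further is needed.
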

\begin{proof}
By Lemma~\ref{ak2m2jn}, we see that the lemma is true for $a=1$. We will now assume that $A>1$ and that the lemma is true for all $a$ with $1\leq a<A$.  By Lemma~\ref{Hecke} with $m,n$ replaced by $2^Am$ and $2^{A+1}n$ we obtain
\begin{align*}a_k(2^Am,2^{A+2}n)=a_k(2^{A-1}m&,2^{A+1}n)+a_k(2^Am,2)\tau_k(2^{A+1}n)\cr&+2^{k-1}(a_k(2^{A+1}m,2^{A+1}n)-a_k(2^Am,2^An)).\end{align*}

By Lemma~\ref{difference} we know that $2^{k-1}(a_k(2^{A+1}m,2^{A+1}n)-a_k(2^Am,2^An))$ vanishes modulo $2^{k-1+\chi+\gamma}$. Since $k-1+\chi+\gamma>\rho+2\gamma$, we see that this term vanishes modulo $2^{\rho+2\gamma}$.
By our induction hypothesis, $a_k(2^{A-1}m,2^{A+1}n)$ vanishes modulo $2^{\rho+2\gamma}$.
Finally, by Lemmas~\ref{ak2in2} and \ref{tau}, we see that
$$a_k(2^Am,2)\tau_k(2^{A+1}n)$$
vanishes modulo $2^{\rho+2\gamma}$, since  $\chi+(A+1)\gamma=(\chi+\gamma)+A\gamma\geq\rho+2\gamma$.
\end{proof}

We now conclude by induction on $b-a$, using Lemmas ~\ref{ak2im2i+1n} and \ref{i-i+2} as base cases.

\begin{proposition}\label{upper} Let $k\in\{12,16,18,20,22,26\}$. For $b>a\geq 1$ and $m, n$ positive and odd, we have that
$$a_k(2^am,2^bn)\equiv 0\pmod{2^{\rho+\gamma(b-a)}}.$$
\end{proposition}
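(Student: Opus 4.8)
The plan is to induct on the difference $d = b - a$, with the cases $d=1$ (Lemma~\ref{ak2im2i+1n}) and $d=2$ (Lemma~\ref{i-i+2}) serving as the base cases, exactly as in the proof of Lemma~\ref{i<j}. For the inductive step, fix $d = b-a \geq 3$ and suppose the proposition holds for all smaller positive differences. I would apply Lemma~\ref{Hecke} with $m,n$ replaced by $2^am$ and $2^{b-1}n$, obtaining
$$a_k(2^{a-1}m,2^{b-1}n)+a_k(2^am,2)\tau_k(2^{b-1}n)+2^{k-1}a_k(2^{a+1}m,2^{b-1}n)=a_k(2^am,2^bn)+2^{k-1}a_k(2^am,2^{b-2}n).$$
The goal is to show every term except $a_k(2^am,2^bn)$ is divisible by $2^{\rho+\gamma d}$, forcing the same for $a_k(2^am,2^bn)$.

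I would handle the four terms as follows. The term $2^{k-1}a_k(2^am,2^{b-2}n)$ has difference $(b-2)-a = d-2$, so by the induction hypothesis (if $d-2\geq 1$; the case $d=3$ gives difference $1$, still covered) it is divisible by $2^{k-1+\rho+\gamma(d-2)}$, and since $k-1 > 2\gamma$ this exceeds $\rho+\gamma d$. The term $2^{k-1}a_k(2^{a+1}m,2^{b-1}n)$ has difference $(b-1)-(a+1) = d-2$, so the same argument applies. For $a_k(2^am,2)\tau_k(2^{b-1}n)$: by Lemma~\ref{ak2in2} (using $a\geq 1$) we have $2^\chi \mid a_k(2^am,2)$, and by Lemma~\ref{tau} we have $2^{\gamma(b-1)}\mid \tau_k(2^{b-1}n)$; since $\chi + \gamma(b-1) = \chi + \gamma(a-1) + \gamma d \geq \chi + \gamma d$ and one checks $\chi + \gamma \geq \rho$ for each $k$ (indeed the base case $a=1$ already uses $\chi+\gamma d \geq \rho + \gamma d$), this product vanishes modulo $2^{\rho+\gamma d}$. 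Finally, for the term $a_k(2^{a-1}m,2^{b-1}n)$: if $a\geq 2$ this has difference $d$ but smaller first exponent, so I would run a secondary downward induction on $a$ (as in Lemma~\ref{i<j}), the congruence above now reading $a_k(2^am,2^bn)\equiv a_k(2^{a-1}m,2^{b-1}n)\pmod{2^{\rho+\gamma d}}$; and when $a=1$, the term $a_k(2^0m,2^{b-1}n) = a_k(m,2^{b-1}n)$ has difference $b-1 = d$ as well, but now it is handled directly by Lemma~\ref{ak2m2jn}, which gives divisibility by $2^{\rho+\gamma(b-2)}\cdot$... — wait, more precisely Lemma~\ref{ak2m2jn} applies to $a_k(2m,2^bn)$, i.e.\ to the term $a_k(2^{a}m,2^{b-1}n)$ reindexed; so I should instead arrange the downward induction to terminate at $a=1$ and invoke Lemma~\ref{ak2m2jn} for $a_k(2m,2^{b-1}n)$, noting $b-1\geq 2$ and $\rho+\gamma((b-1)-1) = \rho+\gamma(d-1+a-1)$ with $a=1$ giving $\rho+\gamma(d-1)$; since we only need divisibility by... hmm, $\rho+\gamma(d-1) < \rho+\gamma d$, so this is not quite enough on its own.

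The main obstacle, then, is exactly this bookkeeping at the bottom of the secondary induction on $a$: when $a=1$, the "shifted-down" term becomes $a_k(m, 2^{b-1}n)$ with $a=0$, for which Lemma~\ref{i<j} gives $2^{\gamma d} \mid a_k(m,2^{b-1}n)$ but this falls short of $2^{\rho+\gamma d}$. I expect the resolution is to not shift all the way down: rewrite the congruence for $a=1$ directly as $a_k(2m,2^bn)\equiv a_k(m,2^{b-1}n)\pmod{2^{\rho+\gamma d}}$ and observe that the genuinely needed estimate for the $a=1$ output is only $2^{\rho+\gamma(b-1)} = 2^{\rho + \gamma d}$ (since $a=1$), which is supplied \emph{directly} by Lemma~\ref{ak2m2jn} applied to $a_k(2m, 2^bn)$ itself (valid since $b\geq 3\geq 2$, giving exponent $\rho+\gamma(b-1) = \rho+\gamma d$) — so the secondary induction on $a$ should be arranged to start its base case at $a=1$ using Lemma~\ref{ak2m2jn} rather than trying to descend below it. With this fix, the remaining cases $a\geq 2$ follow by the clean recursion $a_k(2^am,2^bn)\equiv a_k(2^{a-1}m,2^{b-1}n)$, and all four term-by-term divisibility checks reduce to the inequalities $k-1>2\gamma$, $\chi+\gamma\geq\rho$, and $\rho+\gamma\leq k-1$, each of which is verified case-by-case from the tables of constants. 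This completes the induction and the proof.
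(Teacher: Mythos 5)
Your proposal is correct and, after the self-correction at the end, follows essentially the same route as the paper: an outer induction on $b-a$ with base cases from Lemmas~\ref{ak2im2i+1n} and \ref{i-i+2}, the same application of Lemma~\ref{Hecke} with the same term-by-term estimates, and an inner induction on $a$ anchored at $a=1$ by Lemma~\ref{ak2m2jn}. The inequalities you isolate ($k-1>2\gamma$ and $\chi+\gamma\geq\rho$) are exactly the ones the paper verifies.
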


\begin{proof} The result is true for $b-a=1$ and $b-a=2$ by Lemmas~\ref{ak2im2i+1n} and \ref{i-i+2}.  In addition, by Lemma~\ref{ak2m2jn} it is true for $a=1$ and any $b\geq2$.  We will assume that $b-a=N>2$ with $a>1$, and that the theorem is true for $b-a<N$.

Now, using Lemma~\ref{Hecke} with $m,n$ replaced by $2^am$ and $2^{b-1}n$, we obtain
\begin{align*}a_k(2^{a-1}m,2^{b-1}n)+a_k(2^am,2)&\tau_k(2^{b-1}n)+2^{k-1}a_k(2^{a+1}m,2^{b-1}n)\cr&=a_k(2^am,2^bn)+2^{k-1}a_k(2^am,2^{b-2}n).\end{align*}
By the induction hypothesis, both terms that are multiplied by $2^{k-1}$ are divisible by $2^{k-1}2^{\rho+\gamma(N-2)}$.  Because $k-1>2\gamma$, both of these terms vanish modulo $2^{\rho+\gamma N}$.

Since $a>1$, we see that $a_k(2^am,2)$ is divisible by $2^\chi$ by Lemma~\ref{ak2in2}.  In addition, $\tau_k(2^{b-1}n)$ is divisible by $2^{\gamma(b-1)}$.  Since $\chi+\gamma(b-1)=\gamma(b-a)+\gamma(a-1)+\chi\geq\gamma(b-a)+(\gamma+\chi)\geq \rho+\gamma N$, we see that
$$a_k(2^am,2^bn)\equiv a_k(2^{a-1}m,2^{b-1}n)\pmod {2^{\rho+\gamma N}}.$$
Since the theorem is true for $a=1$ (by Lemma~\ref{ak2m2jn}), a simple induction on $a$ completes the proof.
\end{proof}
\bibliographystyle{amsplain}
\providecommand{\bysame}{\leavevmode\hbox to3em{\hrulefill}\thinspace}
\providecommand{\MR}{\relax\ifhmode\unskip\space\fi MR }
\providecommand{\MRhref}[2]{%
  \href{http://www.ams.org/mathscinet-getitem?mr=#1}{#2}
}
\providecommand{\href}[2]{#2}

\end{document}